\documentclass[a4paper]{amsart}

\pdfoutput=1

\usepackage[utf8]{inputenc}
\usepackage{lmodern}
\usepackage{amsthm, amssymb, amsmath, amsfonts, mathrsfs,dsfont,stmaryrd,url,graphicx}
\usepackage[colorlinks=true, pdfstartview=FitV, linkcolor=blue, citecolor=blue, urlcolor=blue,pagebackref=false]{hyperref}

\usepackage{microtype}







\newtheorem{theorem}{Theorem}[section]
\newtheorem{proposition}[theorem]{Proposition}
\newtheorem{lemma}[theorem]{Lemma}
\newtheorem{cor}[theorem]{Corollary}
\theoremstyle{remark}
\newtheorem{remark}[theorem]{Remark}

\let\oldH=\H

\renewcommand{\le}{\leqslant}
\renewcommand{\ge}{\geqslant}
\renewcommand{\leq}{\leqslant}
\renewcommand{\geq}{\geqslant}
\renewcommand{\subset}{\subseteq}
\renewcommand{\emptyset}{\varnothing}
\newcommand{\mcl}{\mathcal}
\newcommand{\msf}{\mathsf}

\newcommand{\msc}{\mathscr}

\newcommand{\Ll}{\left}
\newcommand{\Rr}{\right}

\newcommand{\rhs}{right-hand side}
\newcommand{\1}{\mathds{1}}
\newcommand{\N}{\mathbb{N}}
\newcommand{\R}{\mathbb{R}}

\newcommand{\Z}{\mathbb{Z}}
\newcommand{\ov}{\overline}

\newcommand{\td}{\widetilde}
\newcommand{\eps}{\varepsilon}
\renewcommand{\d}{{\mathsf{d}}}

\newcommand{\GN}{{\mathcal{G}_N}}
\newcommand{\EN}{{\mathcal{E}_N}}
\renewcommand{\P}{\mathbb{P}}
\newcommand{\Pb}{\mathbb{P}^{b,p}_{\gamma}}
\newcommand{\E}{\mathbb{E}}

\newcommand{\al}{\alpha}

\newcommand{\ga}{\gamma}

\newcommand{\si}{\sigma}
\renewcommand{\H}{\mathcal{H}}
\newcommand{\C}{\mathcal{C}}

\numberwithin{equation}{section}

\title{Spatial Gibbs random graphs}
\author{Jean-Christophe Mourrat, Daniel Valesin}

\address[Jean-Christophe Mourrat]{Ecole normale sup\'erieure de Lyon, CNRS, Lyon, France}

\address[Daniel Valesin]{University of Groningen, Netherlands}

\begin{document}

\begin{abstract}

Many real-world networks of interest are embedded in physical space. We present a new random graph model aiming to reflect the interplay between the geometries of the graph and of the underlying space. The model favors configurations with small average graph distance between vertices, but adding an edge comes at a cost measured according to the geometry of the ambient physical space. In most cases, we identify the order of magnitude of the average graph distance as a function of the parameters of the model. As the proofs reveal, hierarchical structures naturally emerge from our simple modeling assumptions. Moreover, a critical regime exhibits an infinite number of discontinuous phase transitions.

\medskip

\noindent \textsc{MSC 2010: 82C22; 05C80} 

\medskip

\noindent \textsc{Keywords:} spatial random graph, Gibbs measure, phase transition.

\end{abstract}
\maketitle
%
%
%
%
%
%
%
%
\section{Introduction}

In the Erd\oldH{o}s-Rényi random graph, pairs of nodes are connected independently and with the same probability. It is now well-known that most networks of interest in biological, social and technological contexts depart a lot from this fundamental model. In a very influential paper \cite{barabasi1999}, Barab{\'a}si and Albert suggested that these more complex networks have in common that their degree distributions seem to follow a power law. This is in stark contrast with the degree distribution observed in Erd\oldH{o}s-Rényi graphs, which has finite exponential moments. They proposed that this property become the signature of complex networks, a sort of ``order parameter'' of these systems. They then observed that a growth mechanism with preferential attachment reproduces the power-law behavior of the degree distribution.
The work of Barab{\'a}si and Albert triggered a lot of activity, in particular on preferential attachment rules and the configuration model. We refer to \cite{vdH} for a comprehensive account of the mathematical activity on the subject. 

\smallskip

This point of view is however not all-encompassing \cite{keller2005}. Several studies point to the fact that different graphs may share the same degree distribution, and yet have very different large-scale geometries; and moreover, that the ``entropy maximizing'' graphs with a power-law degree sequence---those that would be favored by the point of view expressed above---actually do \emph{not} resemble certain real-world networks. For instance, the authors of \cite{li2004} show that the physical infrastructure of the Internet is very far from resembling a graph obtained from the dynamics of preferential attachment; instead, hierarchical structures are observed, and the organization of the network is best explained as the result of some optimization for performance (see in particular \cite[Figures~6 and 8]{li2004}). Similarly, the network of synaptic connections of the brain depart a lot from ``maximally random'' graphs with a power-law degree sequence \cite{sporns2001}. They also exhibit a hierarchical organization, as well as high clustering, and the authors of \cite{sporns2001} suggest that this is the result of an attempt to maximize a certain measure of complexity of the network, with a view towards computational capabilities (see also \cite{sporns2000a,sporns2000b,sporns2004}). 

\smallskip

The goal of the present paper is to introduce a new model of random graph which is hopefully more representative of such real-world graphs. In our view, one fundamental requirement for our model is to retain the fact that graphs such as the infrastructure of the Internet, transportation or neural networks, are embedded in physical space. The examples we described above seem to suggest that the graphs of interest are the result of some optimization: for the efficient transportation of information in the case of the infrastructure of the Internet, or for some notion of complexity for neural networks. In fact, it is very easy to imagine a wealth of other natural objective functions for a network, depending on the context. As for the geometry of the underlying space, it would be natural to take it as a large subgraph of $\Z^d$. Here we restrict our attention to a one-dimensional underlying structure. 
As for the objective function, we chose a measure of connectedness of the graph: minimizing the diameter of the graph is an example of objective we consider. 

\smallskip

One of the key findings of our study is that despite its simplicity, our model displays a very rich variety of behavior. In particular, a critical case displays an infinite number of discontinuous phase transitions. Moreover, hierarchical structures emerge spontaneously, in the sense that they are not built into the definition of the model. As was pointed out above, hierarchical structures have been seen to occur in real-world networks. While these hierarchies were assumed to emerge from technological constraints in \cite{li2004} (in particular, because only a handful of routers with different bandwidths are commercially available), we show here that the requirements of optimization of the objective function can be sufficient to account for the emergence of such structures.


\smallskip

The random graph we study is the result of a balance between a desire to optimize a certain objective function and entropy effects. 
As announced, we wish to focus here on the simplest possible such model, and therefore restrict ourselves to a one-dimensional ambient space. Let $N$ be a positive integer, and let $G^\circ_N = (V_N,E^\circ_N)$ be the graph with vertex set 
$$
V_N = \{0,\ldots, N-1\}
$$ 
and edge set 
$$
E^\circ_N = \{ \{x,x+1\} \ : \  x,x+1 \in V_N\}.
$$ 
We will refer to elements of $E^\circ_N$ as \emph{ground edges}. In analogy with a transportation network, we may think of elements of $V_N$ as towns, and of edges in $E^\circ_N$ as a basis of low-speed roads connecting towns in succession. We now consider the possibility of adding additional edges ``above'' the ground edges, which we may think of as faster roads, or flight routes. Let 
$$
\msc{E}_N = \Ll\{ \{x,y\} \ : \ x \neq y \in V_N \Rr\} 
$$
be the set of (unordered) pairs of elements of $V_N$, and
$$
\msc G_N = \{g = (V_N,  E_N) : E^\circ_N \subset E_N \subset \msc E_N\}
$$
be the set of graphs over $V_N$ that contain $G^\circ_N$ as a subgraph. 
Each graph $g = (V_N, E_N) \in \msc G_N$ induces a graph metric given by
\begin{multline*}
\d_{g}(x,y) 
= \inf\big\{k \in \N : \exists x_0=x, x_1, \ldots, x_{k-1}, x_k = y \text{ s.t.} \\
\text{ for all } 0 \le j < k, \{x_j,x_{j+1}\} \in  E_N\big\}.
\end{multline*}
This distance is not to be confused with the ``Euclidean'' distance $|\cdot|$. 
For a given $p \in [1,\infty]$ and for each $g \in \mcl G_N$, we define the $\ell^p$-\emph{average path length} by
\begin{equation}
\label{e.def.H}
\H_p(g) = \Ll(\frac{1}{N^2} \sum_{x,y \in V_N} \d_g^p(x,y)\Rr)^\frac 1 p,
\end{equation}
with the usual interpretation as a supremum if $p = \infty$. (In other words, $\H_\infty(g)$ is the diameter of the graph $g$.)
We would like to minimize this average path length, subject to a ``cost'' constraint. The cost is defined in terms of a parameter $\ga \in (0,\infty)$ by
\begin{equation*}  
\C_\ga(g) = \sum_{e \in E_N \atop |e| > 1} |e|^\ga,
\end{equation*}
where for each edge $e = \{x,y\} \in \msc E_N$, we write $|e| = |y-x|$ for the length of the edge $e$. When $\ga = 1$, the cost of a link is equal to its length; the case $\ga < 1$ can be thought of as a situation with ``economies of scale'', in which the marginal cost of an edge is lower when the edge is longer. 

\smallskip

Ideally, we would wish to find the graph $g$ minimizing $\mcl H_p(g)$ subject to a given upper bound on the cost function $\C_\ga(g)$. However, real-life constraints prevent this optimization problem from being resolved exactly. Instead, the resulting graph will be partly unpredictable, and we assume that its probability distribution  follows the Gibbs principle. In other words, we are interested in the Gibbs measure with energy given by a suitable linear combination of $\mcl H_p(g)$ and $\C_\ga(g)$. 

\smallskip

In order to simplify a little the ensuing analysis, we define our model in a slightly different way. We denote the canonical random graph on $\msc G_N$ by $\GN = (V_N, \EN)$. For each $\ga \in (0,\infty)$, we give ourselves a reference measure $\P_\ga$ on $\msc G_N$ such that under $\P_\ga$,
\begin{multline}
\label{e.indep}
\mbox{the events $(\{e \in \EN\})_{e \in \msc E_N, |e| > 1}$ are independent}, \\
\mbox{and each event has probability } \exp\Ll(-|e|^\ga \Rr).
\end{multline}
(We do not display the dependency on $N$ on the measures $\P_\ga$; we may think of the latter as a measure on $\prod_N \msc G_N$.)  We denote by $\E_\ga$ the associated expectation.
Then, for each given $b \in \R$ and $p \in [1,\infty]$, we consider the probability measure $\P^{b,p}_{\ga}$ such that for every $g \in\msc G_N$,
\begin{equation}
\label{e.def.P}
\P^{b,p}_{\ga}[\GN = g] = \frac 1 {Z^{b,p}_{\ga,N}} \exp \Ll( -N^b \, \H_p(g)\Rr) \, \P_{\ga}[\GN = g],
\end{equation} 
where the constant $Z^{b,p}_{\ga,N}$ ensures that $\P^{b,p}_{\ga}$ is a probability measure:
\begin{equation}
\label{e.def.Zbp}
Z^{b,p}_{\ga,N} = \E_{\ga} \Ll[\exp \Ll( -N^b \, \H_p(\GN)\Rr)\Rr].
\end{equation}
We denote by $\E^{b,p}_{\ga}$ the expectation associated with $\P^{b,p}_{\ga}$. One can check that the measure $\P^{b,p}_{\ga}$ is the Gibbs measure with energy
\begin{equation*}  
N^b \H_p(g) - \sum_{e \in E_N} \log \Ll( \frac{\exp \Ll( -|e|^\ga \Rr) }{1-\exp\Ll(-|e|^\ga\Rr)} \Rr) ,
\end{equation*}
which is a minor variant of the energy $N^b \H_p(g) + \C_\ga(g)$. A natural extension of our model would be to consider energies of the form
$$
\beta_N \H_p(g) + \lambda_N \C_\gamma(g),
$$
for general sequences $(\beta_N)$ and $(\lambda_N)$. However, this increase in generality does not seem to change the qualitative behavior of the model, so we favored clarity over generality. 

\smallskip

Our first main result characterizes the behavior of the average path length in terms of the parameters $\gamma$, $b$ and $p$ when $\gamma \neq 1$. 

\begin{theorem}
\label{t.main1}
For every $\ga \neq 1$ and $b \in \R$, let
\begin{equation*}  
\al(\ga,b) := \Ll|
\begin{array}{ll}
\displaystyle{\Ll(\frac{1-b}{2-\ga} \wedge 1\Rr) \vee 0} & \mbox{ if $\ga < 1$},\\[12pt]
\displaystyle{\Ll(\frac{\ga - b}{\ga} \wedge 1\Rr) \vee 0} & \mbox{ if $\ga > 1$}.
 \end{array}
 \Rr.
\end{equation*}
For every $\ga \neq 1$, $b \in \R$, $p \in [1,\infty]$ and $\eps > 0$, we have
\begin{equation*}  
\lim_{N \to \infty}\;  \P^{b,p}_{\ga}\left[\left|\frac{\log \H_p(\mathcal{G}_N)}{\log N} - \al(\ga,b)\right| > \varepsilon \right] = 0.
\end{equation*}
\end{theorem}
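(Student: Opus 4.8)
The plan is to prove two matching estimates: for every $\eps>0$,
\[
\lim_{N\to\infty}\P^{b,p}_\ga\Ll[\H_p(\GN)>N^{\al+\eps}\Rr]=0
\qquad\text{and}\qquad
\lim_{N\to\infty}\P^{b,p}_\ga\Ll[\H_p(\GN)<N^{\al-\eps}\Rr]=0,
\]
writing $\al=\al(\ga,b)$; together these give the theorem. We may assume $\al\in(0,1]$: if $\al=0$ (which forces $b\ge1$ or $b\ge\ga$) the lower estimate is trivial and the upper one uses a construction of polylogarithmic diameter, while if $\al=1$ the upper estimate is trivial since $N^{\al+\eps}>N\ge\H_p(\GN)$. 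Both bounds we actually need rest on the same mechanism: a lower bound on $Z^{b,p}_{\ga,N}$ coming from one good deterministic configuration, together with an upper bound on $\E_\ga[\exp(-N^b\H_p(\GN))\,\1_A]$ for the event $A$ at hand. For $\al\in(0,1)$ one has $b+\al>0$, and $b+\al>1$ when $\ga>1$; these inequalities are what keep the estimates below effective.

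\emph{Upper bound.} For each $\eps$ I would build a deterministic $g_N\in\GN$ by adding to the ground path $E^\circ_N$ a system of ``express'' edges placed at geometrically spaced length scales $N^{\al'},N^{2\al'},\dots$, with $\al'$ slightly larger than $\al$ and the number of scales chosen to balance the cost $\sum_j\big(N/N^{j\al'}\big)N^{j\al'\ga}$ against the length budget --- for $\ga>1$ one stops the scales at about $N^{1-\al'}$ rather than at $N$, to avoid paying $N^\ga$ for one huge edge. Carrying out this optimisation is exactly what produces the formula for $\al(\ga,b)$, the $\wedge1$ and $\vee0$ corresponding respectively to ``add nothing'' and to the polylogarithmic construction. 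The result is a graph with $\H_p(g_N)\le N^{\al+\de}$ for any prescribed $\de>0$ and $\C_\ga(g_N)\le N^{b+\al}$; since the express edges of $g_N$ are present under $\P_\ga$ with probability $\exp(-\C_\ga(g_N))$ by independence,
\[
Z^{b,p}_{\ga,N}\ \ge\ \exp\!\Ll(-N^b\,\H_p(g_N)\Rr)\,\P_\ga\!\Ll[\GN\supseteq g_N\Rr]\ \ge\ \exp\!\Ll(-2N^{b+\al+\de}\Rr),
\]
whereas on $\{\H_p(\GN)>N^{\al+\eps}\}$ the Gibbs weight is at most $\exp(-N^{b+\al+\eps})$; dividing, using $b+\al+\eps>0$ and taking $\de<\eps$, gives the first limit.

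\emph{Lower bound.} Here $\P^{b,p}_\ga[\H_p<N^{\al-\eps}]\le\big(Z^{b,p}_{\ga,N}\big)^{-1}\P_\ga[\H_p(\GN)<N^{\al-\eps}]$, so by the bound on $Z$ it suffices to show $\P_\ga[\H_p(\GN)<N^{\al-\eps}]\le\exp(-N^{b+\al+\eps''})$ for some $\eps''>0$: small $\H_p$ compels many costly long edges, an improbable event under $\P_\ga$. Since $\H_1\le\H_p$, Markov's inequality applied to $\sum_{x,y}\d_g(x,y)=N^2\H_1(g)$ produces vertices at Euclidean distance $\ge N/C_p$ joined by a $g$-path with at most $C_p\,\H_p(g)$ edges. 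For $\ga>1$, convexity of $t\mapsto t^\ga$ along this path gives $\C_\ga(g)\ge c_p\,N^\ga\H_p(g)^{1-\ga}$, so $\{\H_p<N^{\al-\eps}\}$ is contained in a large-deviation event $\{\C_\ga\ge N^{1+\tau}\}$ with $\tau=\tau(\ga,b,\eps)>0$, and the Chernoff bound $\E_\ga[\exp(\theta\C_\ga)]\le\exp(CN)$ (valid for any fixed $\theta<1$, as $\sum_\ell\exp(-(1-\theta)\ell^\ga)<\infty$) then yields $\P_\ga[\H_p<N^{\al-\eps}]\le\exp(-cN^{1+\tau})$, which beats $\big(Z^{b,p}_{\ga,N}\big)^{-1}$. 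For $\ga<1$ this collapses --- a long edge is cheap, so convexity only gives $\C_\ga(g)\ge cN^\ga$, and the true threshold $N^{b+\al}=N^{1-\al(1-\ga)}$ lies \emph{below} $N$, so ``$\C_\ga$ is large'' is typical under $\P_\ga$ --- and I would instead argue structurally. From $g$ with $\H_p(g)<N^{\al-\eps}$, extract a sparse skeleton $S=E^\circ_N\cup S'$ in which $S'$ consists of at most $N^{1-\al+o(1)}$ long edges of $g$, chosen along shortest paths between a hierarchical net of ``marker'' vertices so that $\operatorname{diam}(S)\le N^{\al-\eps/2}$. Then prove the deterministic multiscale estimate that \emph{any} $h\in\GN$ with $\operatorname{diam}(h)\le D$ satisfies $\C_\ga(h)\ge c\,N\,D^{-(1-\ga)}$, so that $\C_\ga(S)\ge cN^{b+\al+(\eps/2)(1-\ga)}$. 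Finally, union-bound over the $\le\exp\!\big(N^{1-\al+o(1)}\big)$ possible skeletons, each occurring with probability $\exp(-\C_\ga(S))$: because $1-\al<b+\al$ (equivalently $\al=(1-b)/(2-\ga)\ge(1-b)/2$), the count is absorbed and $\P_\ga[\H_p<N^{\al-\eps}]\le\exp\!\big(-cN^{b+\al+(\eps/2)(1-\ga)}\big)$, beating $\big(Z^{b,p}_{\ga,N}\big)^{-1}$.

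\emph{The main obstacle} is the deterministic multiscale estimate for $\ga<1$: $\operatorname{diam}(h)\le D$ should force $\C_\ga(h)\gtrsim N\,D^{-(1-\ga)}$. This cannot be read off a single shortest path --- it must exploit all pairs of vertices at once --- and I expect it to need a renormalisation argument, recursively partitioning $[0,N)$ into blocks and bounding, scale by scale, the cost of the edges crossing between adjacent blocks; this is precisely the step that exhibits the hierarchical structure of near-optimal graphs. The other delicate point is the skeleton-extraction: the shortest paths between the markers must be organised so that only $N^{1-\al+o(1)}$ distinct long edges appear among all of them, rather than the $N^{2-\al}$ a naive pairwise argument produces.
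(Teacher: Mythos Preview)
Your overall architecture and the upper bound (lower-bounding $Z^{b,p}_{\gamma,N}$ via an explicit hierarchical configuration, then comparing Gibbs weights) coincide with the paper's. Your lower bound for $\gamma>1$ is correct and in fact simpler than the paper's: where you extract one short path between two far-apart vertices, use convexity of $t\mapsto t^\gamma$ to get $\C_\gamma(g)\gtrsim N^{1+(1-\alpha+\eps)(\gamma-1)}$, and then apply a Chernoff bound on $\C_\gamma$ under $\P_\gamma$, the paper instead develops a renewal structure of ``$\sigma$-cutpoints'' (vertices $x$ with no edge $\{e^-,e^+\}$ satisfying $e^-<x$ and $e^+\ge x+\sigma$, for $\sigma\simeq N^{1-\alpha}$), proves exponential moments for the gaps between them, and deduces that $\gtrsim N^\alpha$ well-separated cutpoints exist except with probability $\exp(-cN^{1+(1-\alpha)(\gamma-1)})$, which in turn forces $\H_1\gtrsim N^\alpha$. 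Your one-line convexity argument replaces all of this machinery.

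For $\gamma<1$, however, your route diverges from the paper's and the gaps you flag are genuine. The skeleton-size bound $|S'|\le N^{1-\alpha+o(1)}$ is not attainable in general: if $g$ consists of $E^\circ_N$ together with one edge $\{x,x+L\}$ at every vertex, $L=N^{1-\alpha+\eps}$, then $\mathrm{diam}(g)\le N^{\alpha-\eps}$, yet any $S\subset g$ with $\mathrm{diam}(S)\le N^{\alpha-\eps/2}$ must contain at least $\sim N/L=N^{\alpha-\eps}$ long edges (since $K$ edges of length $L$ plus $\le N^{\alpha-\eps/2}$ ground edges can span at most $KL+N^{\alpha-\eps/2}$), and for $\alpha>\tfrac12$ this exceeds $N^{1-\alpha}$. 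In that particular example the union bound still happens to close because each edge is individually costly, but to make the argument general you would have to let the skeleton size depend on the edge-length profile of $g$ and verify that the entropy/cost trade-off always wins; for $b<0$ (allowed when $\gamma<1$) the margins are extremely tight, and this is a substantially more delicate accounting than what you sketched. The paper avoids the skeleton and deterministic-cost route entirely. It partitions $V_N$ into $\sim N^{1-\alpha}$ blocks of length $\sim N^\alpha$; a stretched-exponential tail for cutpoint gaps (the $\gamma<1$ analogue of the renewal estimate above) shows that each block independently has $\gtrsim N^\alpha$ internal cutpoints in its middle third except with probability $e^{-cN^{\alpha\gamma}}$; a separate Chernoff bound shows there are at most $\tfrac{1}{20}N^{1-\alpha}$ edges of length $\ge N^\alpha$ except with probability $e^{-cN^{1-\alpha(1-\gamma)}}$. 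On the intersection of these events, most blocks contain many \emph{local} cutpoints (cutpoints for edges with at least one endpoint in the block) and no endpoint of a long edge, and a short combinatorial lemma then forces $\H_1\ge cN^\alpha$. No deterministic lower bound on $\C_\gamma$ and no enumeration of subgraphs enter the argument.
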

Drawings of the function $b \mapsto \al(\ga,b)$ in the cases $0 < \gamma < 1$ and $\gamma > 1$ are displayed in Figure~\ref{f.alpha}.

\begin{figure}[htb]
\begin{center}
\setlength\fboxsep{0pt}
\setlength\fboxrule{0.5pt}
\fbox{\includegraphics[width = 1.0\textwidth]{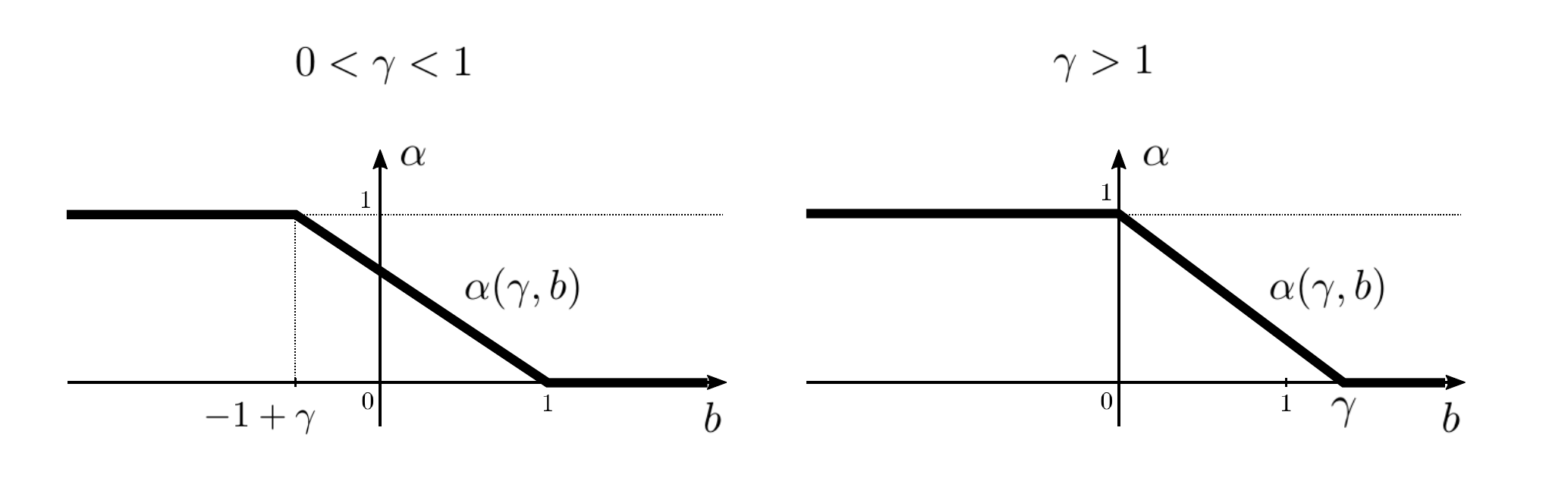}}
\end{center}
\caption{{\small Under $\P^{b,p}_\ga$ for $\ga \neq 1$, we have $\frac{\log \H_p(\mathcal{G}_N)}{\log N} \approx \al(\ga,b)$ with high probability. }}
\label{f.alpha}
\end{figure}

\smallskip

The proof of Theorem~\ref{t.main1} essentially reduces to showing that under the reference measure $\P_\ga$, for every $p \in [1,\infty]$ and $\al \in (0,1)$, one has
\begin{equation}
\label{e.reference}
-\log \P_\ga \Ll[ \H_p(\GN) \simeq N^{\al} \Rr] \simeq 
\Ll|
\begin{array}{ll}
N^{1-\al(1-\gamma)}  & \mbox{ if  $\ga < 1$}, \\
N^{1 + (1-\al)(\ga - 1)}  & \mbox{ if  $\ga > 1$}.
\end{array}
\Rr.
\end{equation}
For $\ga < 1$, the lower bound for this probability is obtained by the hierarchical construction depicted in the top graph of Figure~\ref{f.lower}: we draw the edge connecting the extremities of the interval $V_N$, then the two edges connecting each extremity with the middle point of $V_N$, and so on recursively until reaching edges of length $N^\al$. The lower bound for the case $\ga > 1$ is obtained similarly, but starting from edges of length~$2$ and building successive layers of larger edges, as depicted in the bottom graph in Figure~\ref{f.lower}, until we reach edges of size $N^{1-\al}$.

\begin{figure}[htb]
\begin{center}
\setlength\fboxsep{0pt}
\setlength\fboxrule{0.5pt}
\fbox{\includegraphics[width = 1.0\textwidth]{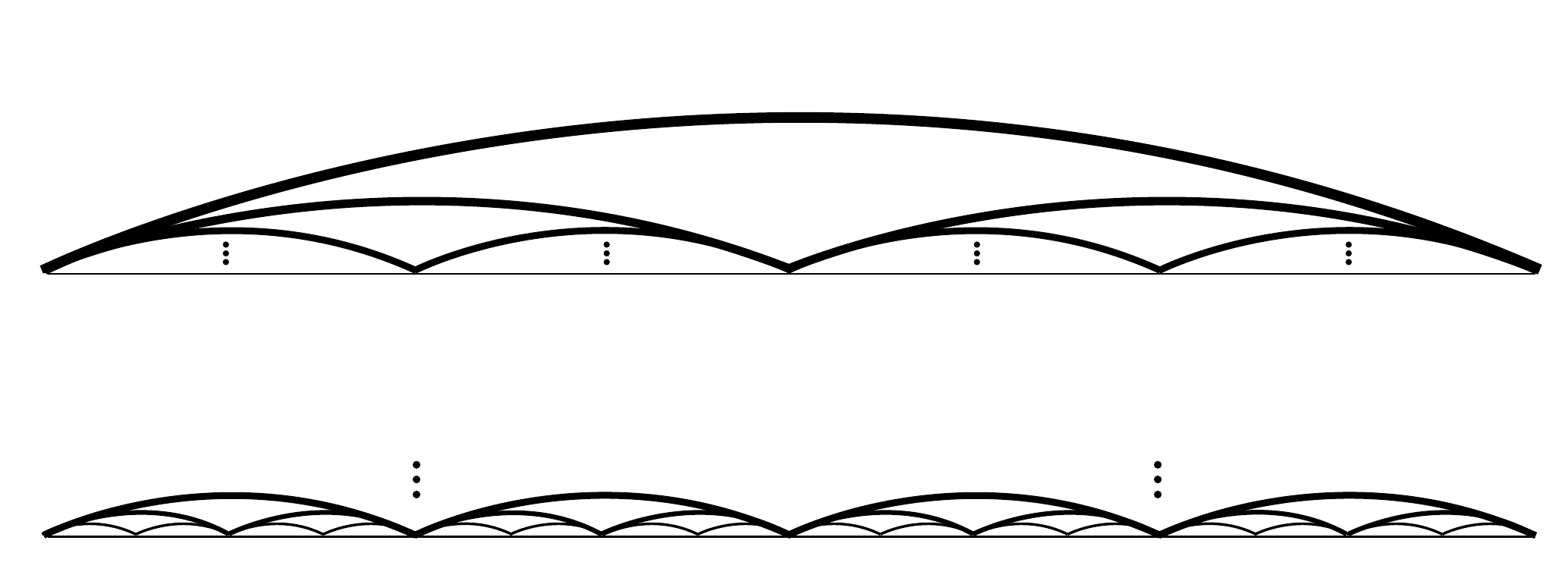}}
\end{center}
\caption{{\small Hierarchical constructions that provide lower bounds for Theorem \ref{t.main1}: case $\gamma < 1$ (top) and $\gamma > 1$ (bottom). }}
\label{f.lower}
\end{figure}

\smallskip

The proof of the upper bound for the left-hand side of \eqref{e.reference} confirms the relevance of the strategy used in the proof of the lower bound in the following sense. For $\ga > 1$, we show that outside of an event of probability smaller than the right side of \eqref{e.reference}, there are of order $N^{\al}$ points at Euclidean distance at least $N^{1-\al}$ from one another and such that no edge of length $N^{1-\al}$ or more goes ``above'' any of these points. For $\ga < 1$, outside of an event of suitably small probability, we identify about $N^{1-\al}$ disjoint sub-intervals which are each of diameter $N^{\al}$ and have no direct connection between one another.

\smallskip

Our second main result concerns the case $\ga = 1$. This case is critical, and therefore more difficult.
Rather than ``all $b \in \mathbb{R}$ and all $p \in [1,\infty]$'' (as in the statement of Theorem \ref{t.main1}), Theorem \ref{t.main2} is applicable to a certain set of  $(b,p) \in \mathbb{R} \times [1,\infty]$. This set is shown in Figure \ref{fig:critical} and defined by
\begin{equation}\label{eq:condition_on_bp}
\frac{k-1}{k} + h(k,p) < b < \frac{k}{k+1}\qquad  \text{for some } k \in \N,
\end{equation}  
where $h:\mathbb{N}\times[1,\infty] \to \mathbb{R}$ is defined by
\begin{equation}\label{eq:def_of_h}
h(k,p):= \Ll|\begin{array}{cl}  \frac{2p-(p-1)k}{k(k+1)(k+2p)}\vee 0&\mbox{if } p<\infty;\\[.2cm]
\frac14&\mbox{if } p=\infty \text{ and } k = 1;\\[.2cm]
0&\mbox{if } p = \infty \text{ and } k > 1. \end{array} \Rr.
\end{equation}

\begin{figure}[htb]
\begin{center}
\setlength\fboxsep{0pt}
\setlength\fboxrule{0.5pt}
\fbox{\includegraphics[width = 1.0\textwidth]{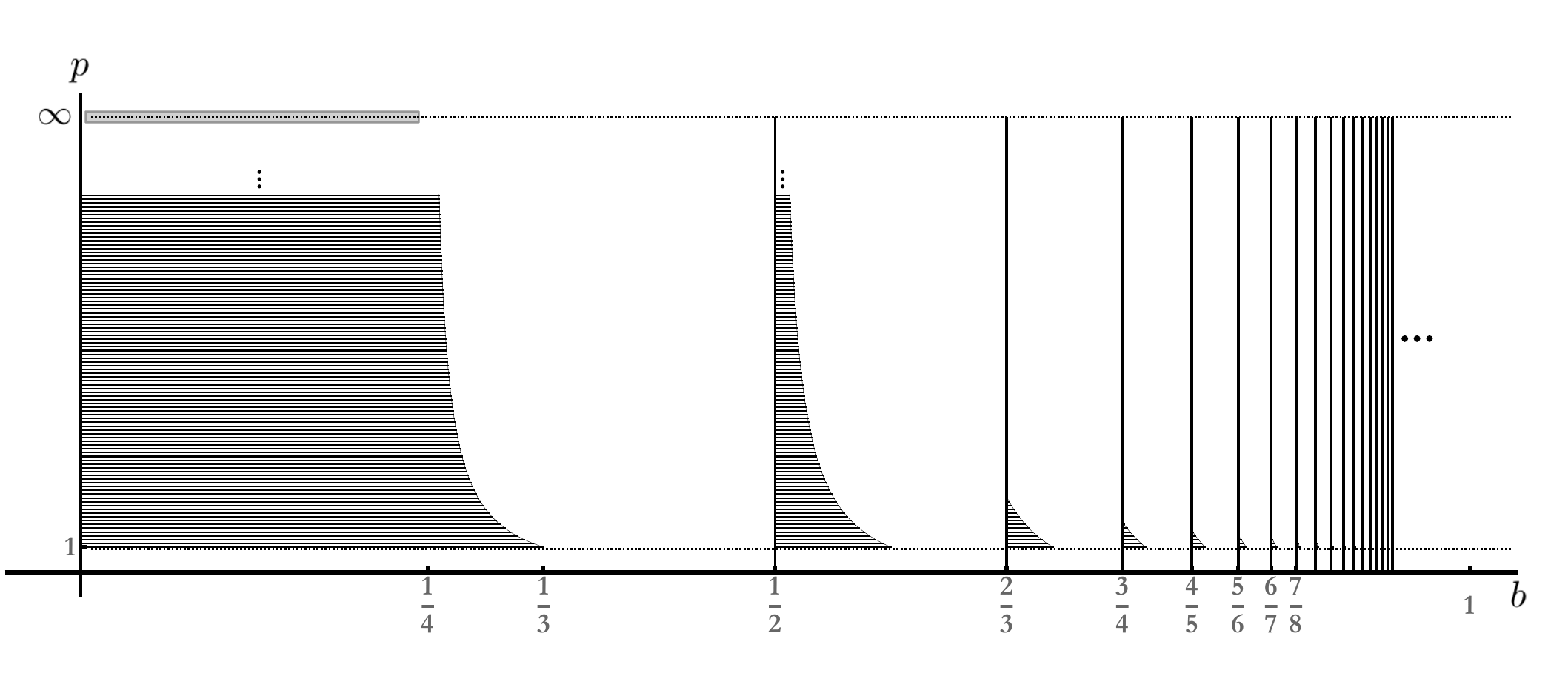}}
\end{center}
\caption{{\small Each rectangular region delimited by the horizontal lines $p = 1$ and $p=\infty$ and the vertical lines $b = \frac{k-1}{k}$ and $b = \frac{k}{k+1}$ is divided into a dark part and a white part. The white part consists of the values of $(b,p)$ covered by Theorem \ref{t.main2}. 
}}
\label{fig:critical}
\end{figure}
\begin{theorem}
\label{t.main2}
If $p \in [1,\infty]$, $k \in \N$,  $b \in \R$ satisfy
$
\frac{k-1}{k} + h(k,p) < b < \frac{k}{k+1},
$
and $\eps > 0$, we have
\begin{equation*}
\lim_{N \to \infty}\;  \P^{b,p}_{1}\left[\left|\frac{\log \H_p(\mathcal{G}_N)}{\log N} - \frac{1}{k+1}\right| > \varepsilon \right] = 0.
\end{equation*}
\end{theorem}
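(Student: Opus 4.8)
\emph{Reduction to a reference estimate.} The plan is to follow the same route as for Theorem~\ref{t.main1}: reduce to a sharp estimate on $\P_1[\H_p(\GN)\le N^\al]$, now in the critical regime $\gamma=1$, which is a much more delicate analogue of \eqref{e.reference}. Concretely, the target is that there is a nonincreasing profile $\psi_p\colon(0,1)\to(0,\infty)$, explicit and piecewise constant away from a countable set, with $\psi_p\equiv k$ on an interval of the form $\big(\tfrac1{k+1},\,\tfrac1k-h(k,p)\big)$ and a short "riser'' of width $h(k,p)$ joining the plateau at height $k$ to the one at height $k-1$ near $\al=\tfrac1k$, such that
\begin{equation*}
-\log\P_1\big[\H_p(\GN)\le N^{\al}\big]=N\,\psi_p(\al)+\text{(lower-order error)};
\end{equation*}
the subtlety is that the error must be controlled well below $N$, in fact below $N^{b+1/(k+1)}$, for the rest of the argument to close. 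Granting this, Theorem~\ref{t.main2} follows by a Laplace-type computation: writing $Z^{b,p}_{1,N}=\E_1[e^{-N^b\H_p(\GN)}]$ and cutting the expectation into dyadic ranges $\H_p\asymp N^{\al}$ gives $-\log Z^{b,p}_{1,N}=\min_{\al\in(0,1)}\big(N^{b+\al}+N\psi_p(\al)\big)+\text{(error)}$, and under $\tfrac{k-1}{k}+h(k,p)<b<\tfrac{k}{k+1}$ this variational problem has $\al=\tfrac1{k+1}$ as its unique minimiser. Here the two sides of the window do complementary work: $b<\tfrac{k}{k+1}$ forces $b+\tfrac1{k+1}<1$, so at the target scale the $\H_p$-penalty $N^{b+1/(k+1)}$ is negligible against the cost $kN$; $b>\tfrac{k-1}{k}+h(k,p)$ forces $b+\big(\tfrac1k-h(k,p)\big)>1$, so every scale $\al\ge\tfrac1k-h(k,p)$—exactly the scales on which cheaper, fewer-level configurations are available—is killed by the $\H_p$-penalty; and the strictly increasing part of $N^{b+\al}$ on the plateau rules out $\al\in(\tfrac1{k+1},\tfrac1k-h(k,p))$. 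A short comparison of weights then gives $\P^{b,p}_1\big[\,|\log\H_p(\GN)/\log N-\tfrac1{k+1}|>\eps\,\big]\to0$.

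\emph{Lower bound on $\P_1$ (constructions).} For $\al$ on the plateau I would use the hierarchical construction of Figure~\ref{f.lower} adapted to $\gamma=1$: nested grids of shortcut edges at the geometric scales $N^{j/(k+1)}$ for $1\le j\le k$. Since each level is a tiling of $V_N$ at its own scale, each level contributes $\asymp N$ to $\C_1$, so the whole skeleton has cost $kN(1+o(1))$ and, once all its edges are present, $\d_{g}(x,y)\le C_k N^{1/(k+1)}$ for all $x,y$; hence $\P_1\big[\H_p(\GN)\le N^{1/(k+1)+\eps}\big]\ge\exp(-kN(1+o(1)))$. For $\al$ in the riser, and only when $p<\infty$, one exploits that a polynomially small fraction of pairs may sit at distance $\sim N^{1/k}$ without violating $\H_p\le N^{\al}$: a $(k-1)$-level hierarchy augmented by a partial top level then reaches $\H_p\le N^{\al}$ at cost strictly below $kN$, which is exactly what makes $\psi_p$ dip below $k$ over a window whose width is $h(k,p)$.

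\emph{Upper bound on $\P_1$, and the main obstacle.} The harder inequality is the matching lower bound on $\C_1$: every $g$ with $\H_p(g)\le N^{\al}$ (for $\al$ below $\tfrac1k-h(k,p)$) must contain edges of total length $kN(1-o(1))$. The natural route is an induction on $k$ via a one-step peeling argument: if there were a positive-density set of points with no edge of length $\gtrsim N^{1-\al}$ passing above them, then many pairs at Euclidean distance $\gtrsim N^{1-\al}$ would be forced to graph distance $>N^{\al}$—and, after the $\ell^p$ bookkeeping, this already contradicts $\H_p(g)\le N^{\al}$; removing the top scale leaves the same problem one level down, at spatial scale $N^{k/(k+1)}$. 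Carrying this out with an error term small enough—in particular $o(N^{b+1/(k+1)})$, uniformly over the relevant range of $\al$—and pinning down the exact threshold $h(k,p)$ (the point at which the $\ell^p$-slack begins to make fewer levels viable) is where the real work lies. I expect the crux to be the combination of the induction on $k$, the careful accounting of "boundary'' blocks, and the control of the sparse exceptional pairs allowed in the $\ell^p$ sum, all while keeping the accumulated error below the threshold demanded by the Laplace step.
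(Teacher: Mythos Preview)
Your high-level plan—reduce to a reference-measure large-deviation estimate and close by a Laplace comparison across scales—is the paper's strategy. But your model for the profile $\psi_p$, and hence your reading of $h(k,p)$, is wrong in a way that breaks the argument.

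You claim $\psi_p\equiv k$ only on $\big(\tfrac1{k+1},\,\tfrac1k-h(k,p)\big)$, with a ``riser'' of width $h(k,p)$ down to $k-1$, and you propose a partial-top-level construction realizing cost strictly below $kN$ there. In fact the paper proves (Proposition~\ref{p.crit}(ii)) that for \emph{every} $\eta\in\big(\tfrac1{k+1},\tfrac1k\big)$,
\[
\P_1\big[\H_p(\GN)\le N^\eta\big]\le\exp\big\{-kN+N^{1-\zeta_p(\eta)}\big\},
\]
so the leading cost is $k$ on the entire open interval; there is no riser at leading order, and the authors explicitly conjecture the theorem holds with $h\equiv0$, which your construction would contradict. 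The quantity $h(k,p)$ is not a feature of $\psi_p$: it measures how large the \emph{subleading} error $N^{1-\zeta_p(\eta)}$ is. On the plateau region $A^{(1)}=[N^{1/(k+1)+\eps},N^{1/k-\eps}]$ the comparison with the target scale requires this error to be beaten by the energy gain $N^{b+\eta}$; writing $g(\eta)=1-\zeta_p(\eta)$, the condition is the affine inequality $b+\eta>g(\eta)$ on $[\tfrac1{k+1},\tfrac1k]$, and $(k-1)/k+h(k,p)$ is exactly the threshold on $b$ that makes it hold (see \eqref{eq:why_h}). Your plateau argument (``penalty increases, cost stays at $kN$, so the left endpoint wins'') ignores this correction and therefore does not close; and your interpretation of $b>(k-1)/k+h(k,p)$ as ``the penalty kills the riser $\al\ge\tfrac1k-h(k,p)$'' has the mechanism wrong.

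Your upper-bound sketch is also off target. You propose to peel edges of length $\gtrsim N^{1-\al}$ and recurse one level down. The paper instead proves a deterministic cost lower bound (Lemma~\ref{lem:cost_is_high}): if $\H_p(g)\le N^\eta$ with $\eta\in(\tfrac1{k+1},\tfrac1k)$, then $\sum_{e\in E_N,\,|e|\ge N^\delta}|e|\ge kN-N^{1-\zeta_{p,\delta}(\eta)}(\log N)^{6k}$. The proof is a three-level recursion showing that almost every ground edge of $E_N^\circ$ is covered $k$ times by edges of length $\ge N^\delta$; the $\ell^p$ bookkeeping enters through a notion of ``regular vertex'' (one with a short graph-path to a Euclidean-far point) and the bound \eqref{eq:num_regular} on the number of irregular ones. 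Proposition~\ref{p.crit}(ii) then follows by a Chernoff bound on $\sum_{|e|\ge N^\delta}|e|$ under $\P_1$, optimized over~$\delta$. No distinguished scale $N^{1-\al}$ appears.
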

 We note in particular that for each $p > 1$, we have $h(k,p) > 0$ if and only if $k < \frac{2p}{p-1}$. Therefore, for each $p > 1$, Theorem~\ref{t.main2} guarantees an infinite number of discontinuous transitions for
\begin{equation*}  
\lim_{N \to \infty} \frac{\log \H_p(\mathcal{G}_N)}{\log N},
\end{equation*}
which ultimately spans the sequence $\Ll(\frac 1 k\Rr)_{k \in \N}$ as $b$ increases to $1$. Figure \ref{f.alpha_c} displays this phenomenon more precisely, and is in sharp contrast with the naive continuation of the graphs of Figure~\ref{f.alpha} to the value $\ga = 1$.

\begin{figure}[htb]
\begin{center}
\setlength\fboxsep{0pt}
\setlength\fboxrule{0.5pt}
\fbox{\includegraphics[width = \textwidth]{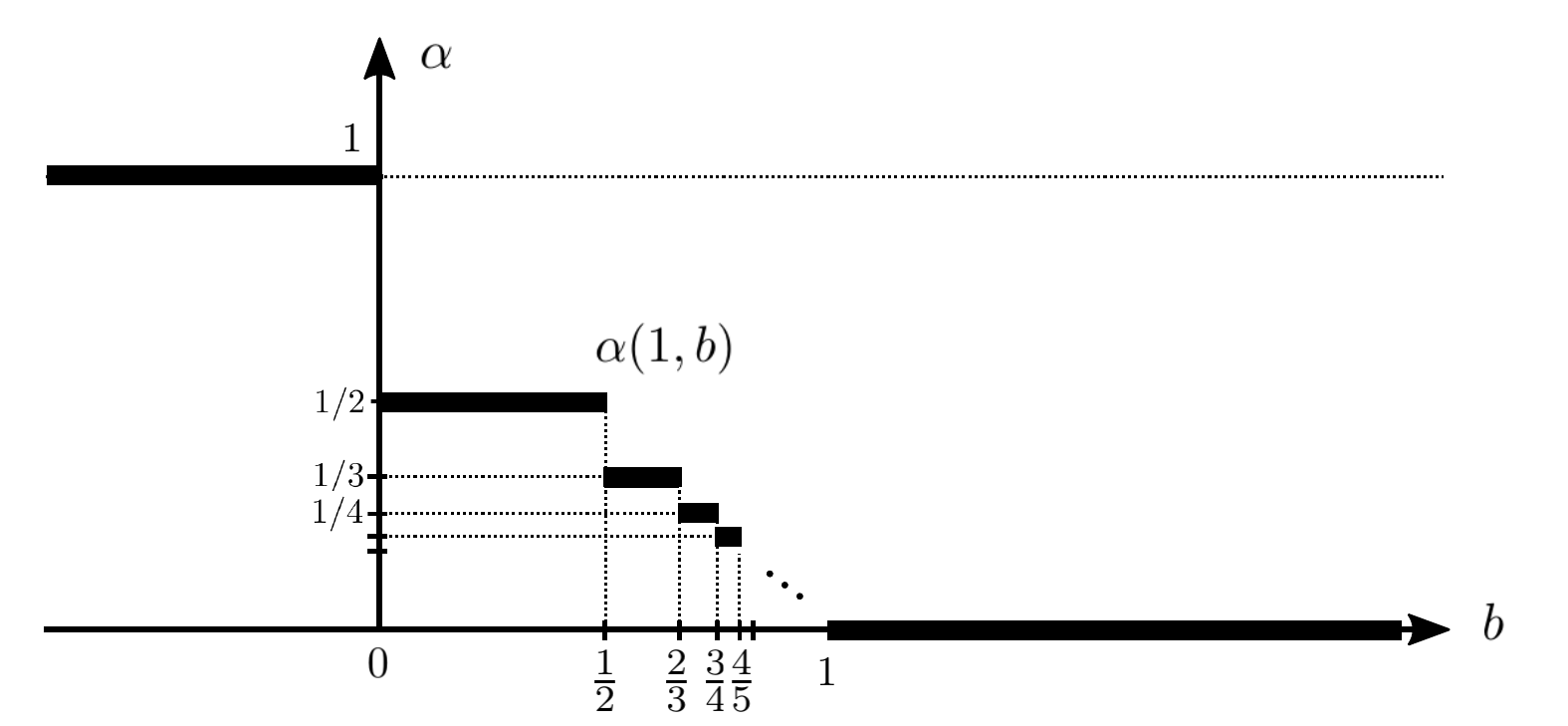}}
\end{center}
\caption{{\small The function $\alpha(1,b)$ plotted above is such that, if $b$ and $p$ satisfy the condition given in \eqref{eq:condition_on_bp}, then under $\P^{b,p}_1$ we have $\frac{\log \H_p(\mathcal{G}_N)}{\log N} \approx \al(1,b)$ with high probability as $N \to \infty$. }}
\label{f.alpha_c}
\end{figure}


\smallskip

The origin of this phenomenon can be intuitively understood as follows. Irrespectively of the value of $\gamma$, the only efficient strategies for reducing the average path length consist in the addition of successive layers of edges above $E_N^\circ$, each of which essentially covers the interval $\{1,\ldots,N\}$. When $\ga = 1$, all layers covering $\{1,\ldots,N\}$ without redundancy have the same cost. If only one layer is allowed, then the most distance-reducing layer is one made of edges of length $N^{\frac 1 2}$, which brings the average path length down to about $N^\frac 1 2$. If two layers are allowed, then it is best to choose one made of edges of length $N^{\frac 1 3}$, and one made of edges of length $N^\frac 2 3$, in which case the average path length is about $N^\frac 1 3$. If $k$ coverings are allowed, then we use layers made of edges of length $N^{\frac 1{k+1}},N^{\frac 2{k+1}}\ldots, N^{\frac k{k+1}}$ respectively, so as to reduce the average path length to $N^\frac 1 k$. The graphs for the cases $k = 1$ and $k = 2$ are illustrated in Figure~\ref{f.crit_graph}. Note that these graphs may be seen as ``in between'' those displayed at the top and bottom of Figure~\ref{f.lower}.

\smallskip

\begin{figure}[htb]
\begin{center}
\setlength\fboxsep{0pt}
\setlength\fboxrule{0.5pt}
\fbox{\includegraphics[width = 1.0\textwidth]{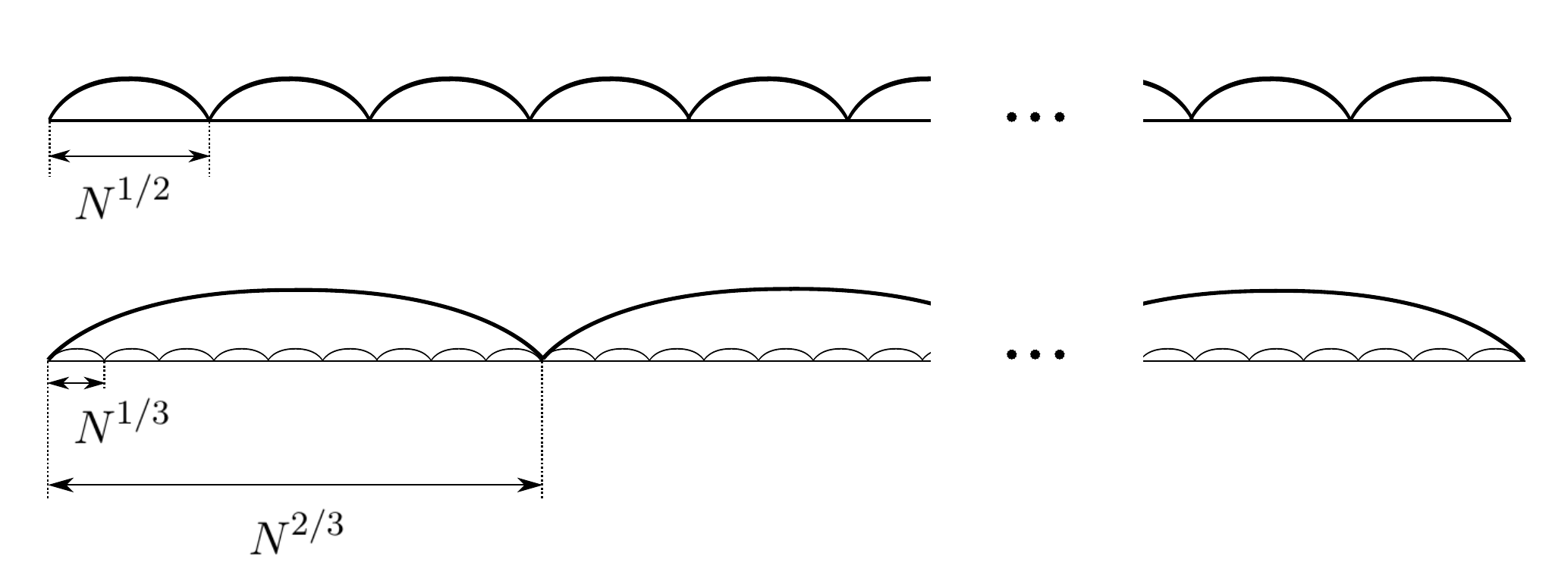}}
\end{center}
\caption{{\small Hierarchical constructions that provide lower bounds for Theorem~\ref{t.main2}. }}
\label{f.crit_graph}
\end{figure}

In view of this, the proof of Theorem~\ref{t.main2} will necessarily be more involved than that of Theorem~\ref{t.main1}. Indeed, 
in the limiting case $\ga = 1$, the right side of \eqref{e.reference} no longer depends on $\al$. The estimate is therefore no longer discriminative, and the proof of Theorem~\ref{t.main2} must rely on more precise information on the probability of deviations of $\H_p(\GN)$ under the reference measure~$\P_1$. Our argument is faithful to the intuition described above, in that we inductively ``reveal'' the necessity of the existence of these successive layers.

\begin{remark}  
We conjecture that Theorem~\ref{t.main2} holds with $h \equiv 0$. Although we do not prove this, our proof of the theorem provides some extra information concerning values of $(b,p)$ that do not satisfy \eqref{eq:condition_on_bp}. Namely,
\begin{enumerate}
\item for every $p \in [1,\infty]$, $b < 0$ and $\eps > 0$, we have
\begin{equation*}\lim_{N \to \infty}\;  \P^{b,p}_{1}\left[\frac{\log \H_p(\mathcal{G}_N)}{\log N} < 1-\eps \right] = 0;\end{equation*}
\item for every $p \in [1,\infty]$, $k \in \N$, $b \in (\frac{k-1}{k},\frac{k}{k+1})$ and $\eps > 0$,
\begin{equation*}\lim_{N \to \infty}\;  \P^{b,p}_{1}\left[\frac{1}{k+1}-\eps< \frac{ \log \H_p(\mathcal{G}_N)}{\log N} < \frac1k -\eps \right] = 1;\end{equation*}
\item for every $p \in [1,\infty]$, $b > 1$ and $\eps > 0$, we have
\begin{equation*}\lim_{N \to \infty}\;  \P^{b,p}_{1}\left[\frac{\log \H_p(\mathcal{G}_N)}{\log N} > \eps \right] = 0.\end{equation*}

\end{enumerate}
\end{remark}

Theorems~\ref{t.main1} and \ref{t.main2} demonstrate that random graph models that are embedded in some ambient space, and that relate to the minimization of some objective function, are amenable to mathematical analysis. They offer a glimpse of some features of real-world networks not captured by more common models, in particular with naturally emerging hierarchical structures. Of course, these results also call for improvement: besides closing the gap apparent in Theorem~\ref{t.main2}, it would be very interesting to obtain more specific results about the exact structure of the hierarchies we expect to be present in the graph. We point out that it is not straightforward to see them appearing in simulations of Glauber-type dynamics adapted to the model we study. We are grateful to Vincent Vigon (University of Strasbourg) for performing such simulations, which are accessible at
\begin{center}
\small{\url{http://mathisgame.com/small_projects/SpacialGibbsRandomGraph/index.html}}
\end{center}

\smallskip

It would also be very interesting to explore generalizations of the model. For many real-world networks, it would be most natural to consider an underlying geometry given by a large box of $\Z^d$, $d \in \{2,3\}$, as opposed to the case $d = 1$ considered here. In fact, the model we consider could be defined starting from an arbitrary reference graph $G^\circ$: the cost of the addition of an edge would then be a function of the distance in the original graph $G^\circ$. Ideally, one would then aim to determine how the properties we discussed here depend on the geometry of the graph $G^\circ$.

\smallskip

Another possible direction for future work would be to consider other objective functions to minimize. We already mentioned that a certain measure of ``complexity'' was identified as a parameter to optimize for neural networks; and that the efficient transportation of information is certainly an explanatory variable for the physical structure of the Internet. Many variations can be imagined. For instance, one may assume that in order to turn a vertex into an efficient ``hub'' with many connections to other vertices, one needs to pay a certain cost (e.g.\ because more infrastructure is necessary, a more powerful router needs to be bought and installed, etc.). This assumption may strengthen the possibility of degree distributions having a fat polynomial tail.

\smallskip




\smallskip

One of the implicit assumptions in our model is that the vertices in $V_N$ are all given the same importance in the computation of the average path length. If we think of the vertices of $V_N$ as towns, it would be more natural to weigh the average path length according to some measure of the number of inhabitants in each town. That is, we would endow each $x \in V_N$ with a number $\tau_x$ measuring the ``importance'' of the vertex $x$, and replace $\H_p(g)$ by a suitable multiple of
\begin{equation}  
\label{e.disordered}
\Ll(\sum_{x,y \in V_N} \tau_x  \tau_y \, \d_g^p(x,y)\Rr)^\frac 1 p.
\end{equation}
As is well-known, city size distributions follow a power law, as do a wide range of other phenomena \cite{yule1925,simon1,simon2}. In this disordered version of our model, it would therefore be natural to assume that $(\tau_x)$ are i.i.d.\ random variables with a power-law tail.

\smallskip

We conclude this introduction by mentioning related works. First, as was apparent in \eqref{e.reference}, our results can be entirely recast in terms of large deviation estimates for some long-range percolation model. While this point of view is also natural, we prefer to emphasize the point of view based on Gibbs measures, which motivates the whole study (and explains in particular our need for a very fine control of the next-order correction to \eqref{e.reference} in the critical case $\gamma =1$, see Proposition~\ref{p.crit} below). For long-range percolation models, it is natural to assume a power-law decay of the probability of a long connection. In contrast, under the reference measure $\P_\gamma$ of our model, we recall that the probability of presence of an edge of length $|e|$ decays like $\exp \Ll( -|e|^\gamma \Rr)$ instead; power-law behavior of long connections is only expected under the Gibbs measure, and for the right choice of parameters. Early studies in long-range percolation models include \cite{schulman1983, newsch,aiznew,akn,gkn}, and were mostly focused on the existence and uniqueness of an infinite percolation cluster. The order of magnitude of the typical distance and the diameter for such models was studied in \cite{benber, copper, biskup, biskup2, ding}. The variant of our model discussed around \eqref{e.disordered} is reminiscent of the inhomogeneous, long-range percolation model introduced in \cite{dvh}. We are not aware of previous work on large deviation events for long-range percolation models.

\smallskip

With aims comparable to ours, several works discussed models obtained by modulating the rule of preferential attachment by a measure of proximity, see \cite{flaxman,aiello,jordan, janssen,cooper,jm1,jm2}. The survey \cite{aldshu} is a good entry point to the literature on geometric and proximity graphs, where for example one draws points at random in the plane and connects points at distance smaller than a given threshold. Upper and lower bounds in problems of balancing short connections and costs of routes were obtained in \cite{aldken, ald}. Similar considerations led to the definition of certain ``cost-benefit'' mechanisms of graph evolution in \cite{louf,mengistu,verma}. 
Another line of research is that of exponential random graphs, see for instance \cite{chadia}, where Gibbs transformations of random graphs such as the configuration model are studied. (We are not aware of spatially embedded versions of these models.) Yet another direction is explored in \cite{achsim}, where the authors give conditions ensuring that the uniform measure on a set of graphs satisfying some constraints can be well-approximated by a product measure on the edges. 

\medskip

\noindent \textbf{Organization of the paper.} We prove Theorem~\ref{t.main1} in Section~\ref{s.neq1}, and Theorem~\ref{t.main2} in Section~\ref{s.gamma1}. The appendix contains a classical large deviation estimate, which we provide for the reader's convenience.

\smallskip

\noindent \textbf{Terminology.} We call any set of the form $\{a,\ldots,b\}$ with $a,b \in V_N$, $a < b$ an \emph{integer interval}. Whenever no confusion occurs, as in this introduction, we simply call it an \emph{interval}.

%
%
%
%
%
%
%
\section{\texorpdfstring{Case $\ga \neq 1$}{Case gamma neq 1}}
\label{s.neq1}
The goal of this section is to prove Theorem~\ref{t.main1}. The section is split into three subsections: we first prove respectively lower and upper bounds on the probability of deviations of $\H_p(\GN)$ under the reference measure $\P_\ga$, and then use them to conclude the proof in the last subsection.
\subsection{Lower bounds}
\label{ss.lower}


In this subsection, we prove lower bounds on the probability of deviations of the diameter~$\H_\infty(\GN)$ under the reference measure $\P_\ga$.
\begin{proposition}
\label{p.lower}
\emph{(1)} If $\ga < 1$, then there exists $C < \infty$ such that for every $\alpha \in (0,1)$, 
$$
\P_\ga[\H_\infty(\GN) \le N^\al] \ge \exp\Ll( -C N^{1-\al(1-\ga)} \Rr).
$$
\emph{(2)} If $\ga > 1$, then there exists $C < \infty$ such that for every $\alpha \in (0,1)$,
$$
\P_\ga[\H_\infty(\GN) \le N^\al] \ge \exp\Ll( -C N^{1+(1-\al)(\ga-1)} \Rr).
$$
\end{proposition}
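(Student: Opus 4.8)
The plan is to realize the hierarchical constructions sketched around Figure~\ref{f.lower} and estimate their probabilities under the product measure $\P_\ga$. In both cases the target event ``$\H_\infty(\GN) \le N^\al$'' will be deduced from the presence of a particular finite family $F \subset \msc E_N$ of edges, so that
$$
\P_\ga[\H_\infty(\GN) \le N^\al] \ge \P_\ga[F \subset \EN] = \prod_{e \in F, |e|>1} \exp(-|e|^\ga),
$$
and everything reduces to (i) checking that the chosen $F$ forces the diameter bound and (ii) bounding $\sum_{e \in F} |e|^\ga$ by a constant times the claimed exponent. To avoid notational clutter I would tacitly replace various quantities such as $N^\al$, $2^j$ etc.\ by their integer parts; this only affects constants.

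For part~(1) ($\ga<1$) I would fix the dyadic hierarchy: assume first $N = 2^n$ for simplicity, let $m = \lceil \al n\rceil$ so that $2^{n-m} \simeq N^{1-\al}$, and for each scale $j \in \{0,1,\ldots,n-m\}$ include in $F$ all edges of the form $\{i\,2^j, (i+1)2^j\}$ with $0 \le i < 2^{n-j}$. Any two vertices of $V_N$ are then connected, by a binary-search/telescoping argument along these scales, by a path of length at most $C\,2^{n-m} \le C N^{\al}$ within the graph $(V_N, E^\circ_N \cup F)$, which gives (i) (after handling a general $N$ by embedding $V_N$ in the next power of $2$ and using monotonicity of $\H_\infty$ in the edge set, or by an elementary direct argument). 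For (ii), the scale-$j$ layer contributes at most $2^{n-j}$ edges of length $2^j$, hence cost at most $2^{n-j}\cdot 2^{j\ga} = 2^n\, 2^{-j(1-\ga)}$; summing the geometric series over $j$ from $0$ to $n-m$ and using $1-\ga>0$ so that the sum is dominated by its largest term $j = n-m$, we get $\sum_{e\in F}|e|^\ga \le C\, 2^n\, 2^{-(n-m)(1-\ga)} = C\, N\, N^{-(1-\al)(1-\ga)} = C\, N^{1-\al(1-\ga)}$, as required.

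For part~(2) ($\ga>1$) I would run the analogous construction ``from the bottom'': now the ground edges $E^\circ_N$ are already present, and I add layers of edges of lengths $2^1, 2^2, \ldots, 2^{r}$ with $r = \lceil (1-\al) n\rceil$ so that $2^r \simeq N^{1-\al}$; the scale-$j$ layer again consists of the non-overlapping edges $\{i\,2^j,(i+1)2^j\}$. The same telescoping shows every pair of vertices is joined by a path of length at most $C\,(N/2^r) \le C N^{\al}$ (first move to the nearest multiple of $2^r$ using the coarser layers, taking $O(1)$ steps per scale, then there are at most $N/2^r$ such multiples to traverse — in fact one should also stitch the top layer, but $N/2^r \simeq N^\al$ of them suffices, and a slightly more careful count with a final coarse layer of length $\simeq N^\al$ closes it). For the cost, the scale-$j$ layer contributes $\le 2^{n-j}\cdot 2^{j\ga} = 2^n 2^{j(\ga-1)}$, and since now $\ga-1>0$ the geometric sum over $j=1,\ldots,r$ is dominated by the top term $j=r$, giving $\sum_{e\in F}|e|^\ga \le C\,2^n 2^{r(\ga-1)} = C\, N\, N^{(1-\al)(\ga-1)} = C\, N^{1+(1-\al)(\ga-1)}$, which is the claimed bound.

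The genuinely delicate point, and the one I would spend the most care on, is (i): making the diameter bound hold for \emph{all} $N$ (not just powers of two) and with the path length genuinely $O(N^\al)$ rather than, say, $O(N^\al \log N)$ or with bad dependence on the other parameters. The clean way is to note $\H_\infty$ is nonincreasing under adding edges, embed $V_N$ into $V_{2^n}$ with $2^{n-1}<N\le 2^n$, perform the dyadic construction there, and observe this only changes the cost by a bounded factor and the diameter bound by a bounded factor; the remaining routine work is the telescoping path estimate, which is a standard ``$O(1)$ corrections per dyadic scale, at most $N^\al$ scales at the finest level traversed directly'' count. Once (i) is in place, part (ii) is the elementary geometric-series bookkeeping indicated above, with the sign of $1-\ga$ deciding in each case which end of the hierarchy dominates the cost — precisely matching the two exponents in the statement.
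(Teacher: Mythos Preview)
Your argument for part~(2) is essentially the paper's, and is fine. Part~(1), however, contains a genuine error: you have chosen the wrong half of the dyadic hierarchy. You take scales $j\in\{0,\ldots,n-m\}$, i.e.\ edge lengths $1,2,\ldots,2^{n-m}\simeq N^{1-\alpha}$; this is the \emph{bottom-up} construction, the same one you (correctly) use for $\gamma>1$. For $\gamma<1$ the cost of the scale-$j$ layer is $2^{n-j}\cdot 2^{j\gamma}=2^n 2^{-j(1-\gamma)}$, and since $1-\gamma>0$ the geometric series over $j=1,\ldots,n-m$ is dominated by its \emph{first} term $j=1$, not by $j=n-m$ as you write. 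The total cost is therefore $\sim N$, which only gives the trivial bound $\exp(-CN)$, strictly worse than $\exp(-CN^{1-\alpha(1-\gamma)})$. (Your final algebra is also off: $2^n 2^{-(n-m)(1-\gamma)}=N^{1-(1-\alpha)(1-\gamma)}$, not $N^{1-\alpha(1-\gamma)}$.)

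The fix, and what the paper does, is to use the \emph{top-down} hierarchy for $\gamma<1$: take scales $j\in\{k,\ldots,n\}$ with $2^k\simeq N^\alpha$, i.e.\ edge lengths from $N^\alpha$ up to $N$. The diameter is then at most $2(2^k+(n-k))\le C N^\alpha$: from any $x$, walk along ground edges to the nearest multiple of $2^k$ (at most $2^k$ steps), then climb the hierarchy to $0$ in at most $n-k=O(\log N)$ steps. The cost is $\sum_{j=k}^n 2^{n-j}2^{j\gamma}=2^n\sum_{j=k}^n 2^{-j(1-\gamma)}$, now dominated by its first term $j=k$, giving $C\,2^n 2^{-k(1-\gamma)}\simeq C N^{1-\alpha(1-\gamma)}$, as required. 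The point is precisely that the sign of $1-\gamma$ decides which end of the hierarchy is cheap; you stated this principle yourself at the end, but applied it only once instead of letting it choose \emph{different} constructions in the two cases.
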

\begin{proof}
For $1 < k \le l$, let 
\begin{equation}
\label{e.def.Lambda}
E_N(k,l) := \big\{ \{i 2^j, (i+1) 2^j\} \in \msc E_N \ : \ i \in \N, \ k \le j \le l \big\}.
\end{equation}
We denote by $\mcl A_N(k,l)$ the event that $E_N(k,l) \subset \EN$.

Let $n$ be the largest integer such that $2^n < N$, and let $k \le n$. When $\ga < 1$, the most efficient strategy for reducing the diameter $\H_\infty$ is to start building a binary hierarchy starting from the highest levels. We are therefore interested in showing that
\begin{equation}
\label{e.low-small-imp}
\mcl A_N(k,n) \quad \implies \quad  \H_\infty(\GN) \le 2^{k+1} + 2(n-k).
\end{equation}
Let $x \in V_N$. For $i_k := \lfloor x/2^k \rfloor$, we have $i_k2^k \in V_N$ and $|x-i_k2^k| < 2^k$. We then define inductively, for every $l \in \{k+1, \ldots, n\}$,
$$
i_{l+1} = \lfloor i_l/2 \rfloor.
$$
We observe that $i_{n+1} = 0$ and
$$
|i_l 2^l - i_{l+1}2^{l+1}| \in \{0, 2^{l} \},
$$
so either the edge $\{i_l 2^l, i_{l+1} 2^{l+1}\}$ belongs to $E_N(k,n)$, or the endpoints are equal. On the event $\mcl A_N(k,n)$, the following path connects $x$ to~$0$ and belongs to $\GN$: take less than $2^k$ unit-length edges to go from $x$ to $i_k 2^k$, and then follow the edges $\{i_l 2^l, i_{l+1} 2^{l+1}\}$ (when the endpoints are different) until reaching $0$ for $l = n$. The total number of steps in this path is less than
$2^k + (n-k)$.
Hence, on the event $\mcl A_N(k,n)$, any two points can be joined by a path of length at most twice this size, and this proves~\eqref{e.low-small-imp}.

It follows from \eqref{e.low-small-imp} that
$$
\P_\ga[\H_\infty(\GN) \le 2^{k+1} + 2(n-k)] \ge \P_\ga[\mcl A_N(k,n)].
$$
In view of what we want to prove and of the fact that $n < \log_2(N)$, we fix $k$ to be the largest integer such that $2^{k} \le N^\al/4$. Since $n < \log_2(N)$, for $N$ sufficiently large, for this choice of $k$, we have
\begin{equation*}  
\P_\ga[\H_\infty(\GN) \le N^\al] \ge \P_\ga[\mcl A_N(k,n)].
\end{equation*}
By \eqref{e.indep} and the fact that $\ga < 1$, the probability in the \rhs\ is
\begin{align*}
\prod_{j = k}^n  \Ll( {\exp\Ll(-2^{\ga j} \Rr)}\Rr)^{\lfloor (N-1)/2^j \rfloor} 
& \ge \exp \Ll(-C N   2^{-(1-\ga)k} \Rr) \\
& \ge \exp \Ll(-C N^{1-\al(1-\ga)} \Rr) ,
\end{align*}
where $C < \infty$ may change from line to line, and where we used the definition of $k$ in the last step. This completes the proof of part (1) of the proposition.

\medskip

We now turn to part (2) of the proposition. When $\ga > 1$, it is more efficient to use events of the form $\mcl A_N(1,k)$ for a suitably chosen $k$. Indeed, similarly to \eqref{e.low-small-imp}, one can show
\begin{equation}
\label{e.low-large-imp}
\mcl A_N(1,k) \quad \implies \quad \mcl H_\infty(\GN) \le 2^{n-k+2} + 2k,
\end{equation}
and therefore
$$
\P_\ga[\H_\infty(\GN) \le 2^{n-k+2} + 2k] \ge \P_\ga[\mcl A_N(1,k)].
$$
We choose $k$ to be the smallest integer such that $2^{n-k} \le N^\al/8$. (Recall that by the definition of $n$, this roughly means $2^k \simeq N^{1-\al}$.) For this choice of $k$ and $N$ sufficiently large, we have
\begin{equation*}  
\P_\ga[\H_\infty(\GN) \le N^\al] \ge \P_\ga[\mcl A_N(1,k)].
\end{equation*}
The latter probability is equal to
\begin{align*}
\prod_{j = 1}^k  \Ll({\exp\Ll(-2^{\ga j} \Rr)}\Rr)^{\lfloor (N-1)/2^j \rfloor}
& \ge \exp \Ll(-C N   2^{(\ga-1)k} \Rr) \\
& \ge \exp \Ll(-C N^{1+(\ga-1)(1-\al)} \Rr),
\end{align*}
where we used that $\ga > 1$ and the definition of $k$.
\end{proof}

%
%
%
%
%
%
%
\subsection{Upper bounds}
\label{ss.upper}
In this subsection, we prove upper bounds on the $\P_\ga$-proba\-bility of deviations of the $\ell^1$-average path length $\H_1(\GN)$. Those upper bounds match the lower bounds obtained in Proposition~\ref{p.lower} for the diameter $\H_\infty(\GN)$. 

\begin{proposition}
\label{p.upper.small}
Assume $\ga < 1$. \\
\emph{(1)} For every $\al \in (0,1)$, there exists $c > 0$ such that
$$
\P_\ga[\mcl H_1(\GN) \le N^\al] \le \exp \Ll( -cN^{1-\al(1-\ga)} \Rr).
$$
\emph{(2)} There exists $c > 0$ such that
$$
\P_\ga[\mcl H_1(\GN) \le cN] \le \exp \Ll( -cN^\ga \Rr).
$$ 
\end{proposition}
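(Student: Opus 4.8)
\emph{Plan.} The statement is purely about the reference measure, so there is nothing to reduce. I would prove both parts from one mechanism: if $\H_1(\GN)$ is unexpectedly small, then $\GN$ must contain an unexpectedly expensive family of ``long'' edges, and such families are exponentially unlikely under $\P_\ga$. The proof then has two ingredients — an exponential moment bound for a truncated cost, and a deterministic lemma turning ``small $\H_1$'' into ``large truncated cost''.

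\emph{Exponential moment bound.} Fix a threshold $a = a(N)$ (to be chosen as a slowly growing function of $N$) and set $X_a := \sum_{e \in \EN,\ |e|\ge a} |e|^\ga$. By the independence in \eqref{e.indep} and $1+t\le e^t$,
\[
\E_\ga\big[e^{X_a/2}\big] = \prod_{|e|\ge a}\big(1 + e^{-|e|^\ga}(e^{|e|^\ga/2}-1)\big) \le \exp\Big(\sum_{|e|\ge a} e^{-|e|^\ga/2}\Big) \le \exp\big(C N\, e^{-a^\ga/4}\big)
\]
for $a$ large, so Markov's inequality gives $\P_\ga[X_a \ge M] \le \exp\big(-\tfrac12 M + C N e^{-a^\ga/4}\big)$. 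Choosing $a = \lceil(\log N)^{2/\ga}\rceil$ makes the error term $o(1)$, hence $\P_\ga[X_a \ge M] \le e^{-M/4}$ for every $M\ge 1$ and every large $N$.

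\emph{Deterministic lemma and conclusion.} The heart of the argument is the claim that there is $c=c(\ga)>0$ such that, for every $g=(V_N,E_N)\in\GN$ and every $T$ with $a \le T \le N/c$,
\[
\H_1(g) \le T \quad\Longrightarrow\quad \sum_{e\in E_N,\ |e|\ge a}|e|^\ga \ \ge\ c\,N\,T^{\ga-1}.
\]
In words: the cheapest way (measured in the $\ga$-cost of the not-too-short edges) to bring the average path length down to $T$ is to install one ``layer'' of about $N/T$ edges of length about $T$ covering $V_N$, possibly topped by cheaper coarser layers, and this costs of order $N\,T^{\ga-1}$. To prove it I would partition $V_N$ into about $N/T$ consecutive intervals of length of order $T$; from $\H_1(g)\le T$ and Markov applied to $x\mapsto \sum_y\d_g(x,y)$, a positive fraction of vertices reach $\ge N/2$ vertices within $O(T)$ steps; a breadth-first analysis from such a vertex (delete the tree edges longer than a scale $s_0$; each surviving subtree has Euclidean extent at most its depth times $s_0$, so $O(T s_0)$ vertices; count) shows many long edges must be present, and aggregating this over a positive fraction of vertices — being careful that a few very long edges reused by many roots do not corrupt the count — yields the displayed cost bound. (In the contrapositive, the form matching the lower-bound construction of Figure~\ref{f.lower}, smallness of the left side lets one extract about $N/T$ disjoint length-$T$ sub-intervals with no effective long connection, forcing $\H_1(g)>T$.) Granting the lemma, $\{\H_1(\GN)\le T\}\subseteq\{X_a \ge c\,N\,T^{\ga-1}\}$, so the exponential moment bound gives $\P_\ga[\H_1(\GN)\le T]\le \exp(-c'N T^{\ga-1})$. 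Taking $T=N^\al$ gives $N T^{\ga-1}=N^{1-\al(1-\ga)}$, which is part (1); taking $T=cN$ with $c$ small gives $N T^{\ga-1}$ of order $N^\ga$, which is part (2). For the latter the lemma needs a minor correction: when $T$ is of order $N$ one can push $\H_1$ below $cN$ using only edges of length $<a$, but this forces anomalously many of them (the short-edge part of $g$ must itself have atypically small average path length), an event of probability $e^{-\Omega(N)}=o(e^{-cN^\ga})$, hence harmless.

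\emph{Main obstacle.} The deterministic lemma is the real content, and the delicate point is the implicit \emph{upper} cut-off on the useful scale: one must rule out that a bounded number of edges much longer than $T$ — individually cheap, since $|e|\mapsto|e|^\ga$ grows slowly — are responsible for the smallness of $\H_1$. They cannot, because such an edge shortcuts only pairs lying within $O(T)$ of one of its two endpoints and is therefore useless for all but a vanishing fraction of pairs; but converting this heuristic into the clean bound $c\,N\,T^{\ga-1}$, uniformly in $\al\in(0,1)$ and in particular across $\al=\tfrac12$, where the family of admissible covering scales changes character, is where the genuine combinatorial work lies.
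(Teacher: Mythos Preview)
Your strategy — reduce to a deterministic implication ``$\H_1(g)\le T\Rightarrow\sum_{|e|\ge a}|e|^\ga\ge cNT^{\ga-1}$'' and then control the right-hand side by an exponential-moment bound — is different from the paper's, and the exponential-moment part is fine. The gap is the deterministic lemma itself: your sketch does not deliver the stated bound, and the shortfall is a genuine power of $N$, not a constant. A single BFS tree of depth $O(T)$ spanning $\ge N/2$ vertices yields, for every scale $s$, at least $cN/(Ts)$ tree edges of length $>s$; summing $|e|^\ga$ over dyadic scales gives only
\[
\sum_{|e|\ge a}|e|^\ga \ \ge\ c\,\frac{N}{T}\sum_{j:\,2^j\ge a}2^{j(\ga-1)}\ \simeq\ c\,\frac{N\,a^{\ga-1}}{T},
\]
because for $\ga<1$ the geometric series is dominated by its smallest term. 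With $a$ polylogarithmic and $T=N^\al$ this is $N^{1-\al}$ up to logarithms, whereas you need $NT^{\ga-1}=N^{1-\al(1-\ga)}$, larger by a factor $N^{\al\ga}$. Aggregating over roots does not recover this factor, since distinct roots can (and in the extremal constructions do) reuse exactly the same long edges; and your parenthetical contrapositive (``extract about $N/T$ disjoint length-$T$ sub-intervals with no effective long connection'') is precisely the missing argument — you have named it, not supplied it.

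The paper bypasses the deterministic cost lemma entirely. It works with \emph{cutpoints}: a vertex $x$ is a cutpoint if no edge passes strictly over it, and a \emph{local cutpoint in $I$} if no edge with an endpoint in $I$ does so. Two ingredients replace your lemma: (i) a deterministic estimate that if an interval $I$ contains $T$ local cutpoints then $\sum_{x,y\in I}\d_\GN(x,y)\ge T^3/63$; and (ii) a probabilistic estimate — proved via a renewal-type decomposition on the infinite line and a stretched-exponential moment bound for the gap to the next cutpoint — that under $\P_\ga$ an interval of length $M$ has at least $cM$ cutpoints except on an event of probability at most $e^{-cM^\ga}$. For part~(1), partition $V_N$ into about $N^{1-\al}$ intervals of length $3N^\al$; by (ii) and independence, at least half of them have $\ge cN^\al$ induced cutpoints except with probability $e^{-cN^{1-\al}\cdot N^{\al\ga}}=e^{-cN^{1-\al(1-\ga)}}$. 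A separate easy event — at most $N^{1-\al}/20$ edges of length $\ge N^\al$, with the same probability cost — then singles out $\ge cN^{1-\al}$ intervals containing no endpoint of any long edge, so that induced cutpoints in their middle thirds are in fact local cutpoints. Finally (i), applied to pairs of such intervals, forces $\H_1(\GN)\ge cN^\al$. Part~(2) is immediate from (i) and (ii) with $M=N$. The correct exponent $1-\al(1-\ga)$ thus emerges not from a cost calculation but from applying the cutpoint tail $e^{-cM^\ga}$ at scale $M=N^\al$ independently over $N^{1-\al}$ disjoint intervals — exactly the localisation your contrapositive gestures at but does not carry out.
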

\begin{proposition}
\label{p.upper.large}
Assume $\ga > 1$. \\
\emph{(1)} For every $\al \in (0,1)$, there exists $c > 0$ such that
$$
\P_\ga \Ll[ \H_1(\GN) \le N^\al \Rr]  \le \exp\Ll(-c N^{1+(1-\al)(\ga-1)} \Rr),
$$
\emph{(2)} There exists $c > 0$ such that
$$
\P_\ga \Ll[ \H_1(\GN) \le c N \Rr]  \le \exp\Ll(-c N \Rr).
$$
\end{proposition}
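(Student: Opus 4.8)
The plan is to reduce both parts to one deterministic lower bound on the cost in terms of the $\ell^1$ average path length, and then to feed it into a routine exponential estimate for the cost under the product measure $\P_\ga$. The deterministic claim is: there is $c_\ga > 0$ such that, for $N$ large and $1 \le R \le N/100$, every $g = (V_N,E_N) \in \msc G_N$ with $\H_1(g) \le R$ satisfies $\C_\ga(g) \ge c_\ga\, N^\ga R^{1-\ga}$. To prove it, set $L_0 := N/(80R)$, so $L_0 > 1$. From $\sum_{x,y} \d_g(x,y) = N^2\H_1(g) \le RN^2$ and Markov's inequality, at most $N^2/4$ ordered pairs $(x,y)$ have $\d_g(x,y) > 4R$, while at least $N^2/2$ pairs have $|x-y| \ge N/10$; hence at least $N^2/4$ pairs are \emph{good}, in the sense that $|x-y| \ge N/10$ and $\d_g(x,y) \le 4R$. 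For each good pair fix a geodesic $\pi(x,y)$ in $g$: it has at most $4R$ edges whose lengths sum to at least $|x-y| \ge N/10$, and the edges of length $< L_0$ contribute at most $4R\cdot L_0 = N/20$ to this sum, so the edges of length $\ge L_0$ contribute at least $N/20$. Summing over good pairs, and using that each edge lies on at most $N^2$ of the chosen geodesics,
\[
\frac{N^3}{80} \;\le\; \sum_{e \in E_N,\ |e| \ge L_0} |e| \cdot \#\{(x,y) \text{ good} : e \in \pi(x,y)\} \;\le\; N^2 \sum_{e \in E_N,\ |e| \ge L_0} |e|,
\]
so $\sum_{|e| \ge L_0} |e| \ge N/80$. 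Since $\ga > 1$ and every such edge has $|e| > 1$, we have $|e|^\ga \ge L_0^{\ga-1}|e|$, whence $\C_\ga(g) \ge L_0^{\ga-1}\cdot N/80 = c_\ga N^\ga R^{1-\ga}$ with $c_\ga = 80^{-\ga}$.

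The probabilistic input concerns $\C_\ga(\GN) = \sum_{|e|>1}|e|^\ga \1[e \in \EN]$, a sum of independent terms. Estimating the moment generating function edge by edge gives $\E_\ga[e^{\C_\ga(\GN)/2}] \le \prod_{|e|>1}(1 + e^{-|e|^\ga/2}) \le \exp(C_1 N)$ for some finite $C_1 = C_1(\ga)$, hence $\P_\ga[\C_\ga(\GN) \ge M] \le e^{-M/2 + C_1 N} \le e^{-M/4}$ whenever $M \ge 4C_1 N$ (this is the type of estimate recorded in the appendix). For part~(1) I would apply the deterministic claim with $R = N^\al$, $\al \in (0,1)$: then $(1-\al)(\ga-1) > 0$, so $c_\ga N^\ga R^{1-\ga} = c_\ga N^{1+(1-\al)(\ga-1)} \ge 4C_1 N$ for $N$ large, and
\[
\P_\ga\big[\H_1(\GN) \le N^\al\big] \le \P_\ga\big[\C_\ga(\GN) \ge c_\ga N^{1+(1-\al)(\ga-1)}\big] \le \exp\big(-c\, N^{1+(1-\al)(\ga-1)}\big).
\]
For part~(2) I would take $R = cN$: the threshold becomes $c_\ga c^{1-\ga}N$, and since $\ga > 1$ the prefactor $c_\ga c^{1-\ga}$ diverges as $c \to 0$, so fixing $c$ small enough that $c_\ga c^{1-\ga} \ge 4C_1$ and $c < 1/100$ yields $\P_\ga[\H_1(\GN) \le cN] \le \P_\ga[\C_\ga(\GN) \ge c_\ga c^{1-\ga}N] \le e^{-cN}$ after relabeling.

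The main obstacle is the deterministic claim. One must show that forcing the $\ell^1$ average path length below $R$ genuinely requires a total edge length of order $N$ carried by edges of length $\gtrsim N/R$ — and not merely a profusion of short edges — and it is precisely the superlinearity of the cost ($\ga > 1$) that turns this into $\C_\ga(g) \gtrsim N^\ga R^{1-\ga}$; the large deviation step is then standard. The argument above is the ``averaged'' counterpart of the more geometric mechanism announced after \eqref{e.reference} (namely that, outside a rare event, one finds $\sim N^\al$ well-separated points with no edge of length $\ge N^{1-\al}$ above them); either route should work, but the averaged form is technically lighter.
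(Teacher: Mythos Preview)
Your proof is correct, and it follows a genuinely different route from the paper. The paper builds a structural theory of $\sigma$-cutpoints: it shows (Lemma~\ref{l.T-to-H}) that many well-separated $\sigma$-cutpoints force $\H_1$ to be large, then controls the gaps between successive cutpoints via exponential-moment bounds on a ``reach'' variable (Lemmas~\ref{l.coupling} and~\ref{l.tail.X.large}), and finally concludes by a Chebyshev bound with $\sigma \asymp N^{1-\al}$. Your argument bypasses this machinery entirely: a direct pigeonhole on geodesics shows that $\H_1(g) \le R$ forces $\sum_{|e| \ge N/(80R)} |e| \gtrsim N$, and superlinearity of the cost ($\ga > 1$) converts this into $\C_\ga(g) \gtrsim N^\ga R^{1-\ga}$; one then only needs the elementary moment bound $\E_\ga[e^{\C_\ga(\GN)/2}] \le e^{CN}$. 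Your approach is shorter and more self-contained; the paper's approach, on the other hand, yields structural information about the graph (the existence of many well-separated points with no long edge above them), and the cutpoint framework is reused in the $\ga < 1$ case (Proposition~\ref{p.upper.small}), which gives the exposition some unity. Your closing remark accurately identifies the two proofs as the ``averaged'' and ``geometric'' versions of the mechanism sketched after~\eqref{e.reference}.
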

While part (2) of Propositions~\ref{p.upper.small} and \ref{p.upper.large} are not really needed for the proof of Theorem~\ref{t.main1}, we find it interesting to point out that these small probability estimates already hold as soon as the diameter is required to be a small constant times $N$.

For clarity of exposition, we will prove Proposition~\ref{p.upper.large} first. We start by introducing the notion of $\sigma$-cutpoint, which in its special case $\sigma = 1$ was already used in \cite{benber}. 
For any $\sigma > 0$, we say that $x \in V_N$ is a $\sigma$\emph{-cutpoint} in the graph $\GN$ if no edge $e = \{e^-,e^+\} \in \EN$ is such that $e^- < x$ and $e^+ \ge  x + \si$.  In other words, no edge of length $\si$ passing ``above $x$'' reaches $x + \si$ or further to the right. (In view of the proof of Proposition~\ref{p.lower}, we can anticipate that for $\ga > 1$, we will ultimately choose $\sigma \simeq N^{1-\al}$.) Let $X_0 = 0$, and define recursively
$$
X_{i+1} = \inf \{ x \ge X_i + \si\ : x \mbox{ is a $\sigma$-cutpoint in $\GN$}\},
$$
with the convention that $X_{i+1} = N$ if the set is empty. We also define 
$$
T = \sup \{ i: X_i < N\}.
$$
Both the sequence $(X_i)$ and $T$ depend on $N$ and $\sigma$, although the notation does not make it explicit. The quantity $T$ records a number of $\sigma$-cutpoints that are sufficiently separated from one another. We would like to say that up to a constant, $\H_1(\GN)$ should be at least as large as $T$. 
While this would be correct if $\H_1(\GN)$ was replaced by the diameter $\H_\infty(\GN)$, counter-examples can be produced for $\H_1(\GN)$.  
The next lemma provides us with a suitably weakened version of this idea. There, one should think of $X_{T_1}$ and $(N-X_{T_2})$ as being of order $N$ and of $T_2 - T_1$ as being of order $T$.
\begin{lemma}[average path length via $\si$-cutpoints]
\label{l.T-to-H}
If $0 < T_1 < T_2  \le T$, then
$$
\H_1(\GN) \ge  \frac{2X_{T_1} (N-X_{T_2})}{N^2}  (T_2-T_1).
$$
\end{lemma}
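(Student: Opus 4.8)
The plan is to use the $\sigma$-cutpoints $X_{T_1+1},\dots,X_{T_2}$ as a family of ``walls'' that any path in $\GN$ must climb over one at a time. To make this precise, I would first assign to each vertex $z\in V_N$ its \emph{block index}
\[
\beta(z):=\max\{i\ge 0:\ X_i\le z\}.
\]
This is well defined and satisfies $0\le\beta(z)\le T$: indeed $X_0=0\le z$, while $z\le N-1<N=X_{T+1}$ for every $z\in V_N$ (recall $X_i=N$ for all $i>T$). By construction, $X_{\beta(z)}\le z<X_{\beta(z)+1}$.

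The key step is the claim that the block index varies by at most one along any edge of $\GN$, i.e.\ $|\beta(u)-\beta(v)|\le 1$ whenever $\{u,v\}\in\EN$. To prove it, assume $u<v$ and, for contradiction, that $\beta(v)\ge\beta(u)+2$; write $a:=\beta(u)$. Since $\beta(v)\le T$ we get $a+2\le T$, so $X_{a+1}$ is one of the genuine $\sigma$-cutpoints produced by the recursion. From the definition of $\beta$ we have $u<X_{a+1}$, while $v\ge X_{a+2}\ge X_{a+1}+\sigma$, the last inequality because $X_{i+1}\ge X_i+\sigma$ for every $i$. Thus the edge $e=\{u,v\}$ satisfies $e^-=u<X_{a+1}$ and $e^+=v\ge X_{a+1}+\sigma$, contradicting the fact that $X_{a+1}$ is a $\sigma$-cutpoint. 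This proves the claim (the case $v<u$ being symmetric).

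Granting the claim, any path attaining $\d_{\GN}(x,y)$ has $\d_{\GN}(x,y)$ edges, and $\beta$ changes by at most one across each of them, so $\d_{\GN}(x,y)\ge|\beta(x)-\beta(y)|$. Now if $x\in V_N$ satisfies $x<X_{T_1}$ then $\beta(x)\le T_1-1$, and if $y\in V_N$ satisfies $y\ge X_{T_2}$ then $\beta(y)\ge T_2$; hence $\d_{\GN}(x,y)\ge T_2-T_1$ for every such pair. Using $0<T_1<T_2\le T$ (which yields $1\le X_{T_1}<X_{T_2}<N$), there are exactly $X_{T_1}$ vertices $x\in V_N$ with $x<X_{T_1}$ and exactly $N-X_{T_2}$ vertices $y\in V_N$ with $y\ge X_{T_2}$. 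Summing over ordered pairs of vertices and taking both orderings into account,
\[
N^2\,\H_1(\GN)=\sum_{x,y\in V_N}\d_{\GN}(x,y)\ \ge\ 2\,X_{T_1}\,(N-X_{T_2})\,(T_2-T_1),
\]
and dividing by $N^2$ gives the asserted inequality.

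The only point requiring a little care is the bookkeeping inside the key claim: one must check that the cutpoint $X_{a+1}$ appearing there is an honest $\sigma$-cutpoint and not a value already equal to $N$, which is exactly where the hypothesis $T_2\le T$ is used. Everything else is elementary counting, and I do not expect any substantial obstacle.
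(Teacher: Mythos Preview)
Your proof is correct and follows essentially the same approach as the paper: both arguments use the $\sigma$-cutpoints $X_i$ as barriers that no single edge can jump over (your ``$\beta$ is $1$-Lipschitz along edges'' is exactly the paper's ``any path from $x$ to $y$ must visit each interval $\{X_i,\dots,X_{i+1}-1\}$''), and then count pairs $(x,y)$ with $x<X_{T_1}$ and $y\ge X_{T_2}$. The paper routes the final count through an intermediate double sum $2\sum_{j<j'}(X_j-X_{j-1})(X_{j'+1}-X_{j'})(j'-j)$ before restricting to $j\le T_1$, $j'\ge T_2$ and telescoping, whereas you go directly to the endpoint count; this is a cosmetic difference only.
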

\begin{proof}
Consider the situation where $x, y \in V_N$ and $1 \le j,j'\le T$ are such that
\begin{equation}
\label{e.t-to-h}
x < X_j < X_{j'} \le y.
\end{equation}
Any path connecting $x$ to $y$ must visit each of the intervals $\{X_{i}, \ldots, X_{i+1}-1\}$, where $i \in \{j , \ldots,j'-1\}$. Indeed, it suffices to verify that there is no edge $e = \{e^-, e^+\}$ such that $e^- < X_i$ and $e^+ \ge X_{i+1}$. This is true since $X_i$ is a $\si$-cutpoint and $X_{i+1} - X_i \ge \si$. Hence, if \eqref{e.t-to-h} holds, then $\d_\GN(x,y) \ge j'-j$. As a consequence,
\begin{align*}
\sum_{x,y \in V_N} \d_\GN(x,y) & \ge 2 \sum_{1 \le j < j'\le T} \ \sum_{\substack{ X_{j-1} \le x < X_j \\ X_{j'}\le y < X_{j'+1} }} \d_\GN(x,y) \\
& \ge 2 \sum_{1 \le j < j'\le T} (X_j - X_{j-1})(X_{j'+1} - X_{j'}) (j'-j).
\end{align*}
Restricting the sum to indices such that $1 \le j \le T_1$ and $T_2 \le j' \le T$, we obtain the announced bound.
\end{proof}
In order to proceed with the argument, it is convenient to extend the set of vertices to the full line $\Z$: we consider $\msc E_\infty = \{\{x,y\} \ : \ x \neq y \in \Z\}$, and the random set of edges $\mcl E_\infty$ whose law under $\P_\ga$ is described by
\begin{multline}
\label{e.indep-infty}
\mbox{the events $(\{e \in \mcl E_\infty\})_{e \in \msc E_\infty, |e| > 1}$ are independent}, \\
\mbox{and each event has probability } \exp\Ll(-|e|^\ga \Rr).
\end{multline}
We can and will assume that under $\P_\ga$, the sets $\mcl E_N$ and $\mcl E_\infty$ are coupled so that $\mcl E_N \subset \mcl E_\infty$. In particular, a $\si$-cutpoint in $\mcl G_\infty := (\Z,\mcl E_\infty)$ is a $\si$-cutpoint in $\GN = (V_N,\EN)$. We define the sequence $(\td X_i)_{i \in \N}$ as following the definition of $(X_i)$, but now for the graph $\mcl G_\infty$. That is, we let $\td X_0 = 0$ and for all $i \ge 0$,
$$
\td X_{i+1} := \inf \{ x \ge \td X_i + \si\ : x \mbox{ is a $\sigma$-cutpoint in $\mcl G_\infty$}\}.
$$
The aforementioned coupling guarantees that for every $i \in \N$,
\begin{equation}
\label{e.coupling}
X_i \le \td X_i.
\end{equation}
\begin{lemma}[i.i.d.\ structure]
\label{l.coupling}
The sequence $(\td X_{i+1} - \td X_i)_{i \ge 0}$ is stochastically dominated by a sequence of i.i.d.\ random variables distributed as $\td X_1$. 
\end{lemma}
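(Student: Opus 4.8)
The plan is to prove the stronger, conditional estimate that for every $i \ge 0$ and every $s \ge 0$,
\begin{equation}
\label{e.cond-dom}
\P_\ga\Ll[\,\td X_{i+1} - \td X_i > s \mid \td X_0, \dots, \td X_i\,\Rr] \ \le\ \P_\ga\Ll[\td X_1 > s\Rr] \qquad \text{almost surely},
\end{equation}
and then to invoke the standard fact that such a uniform conditional tail bound along the filtration generated by $(\td X_i)$ produces a coupling in which each increment is bounded above by the corresponding term of an i.i.d.\ sequence with the marginal law of $\td X_1$. Concretely, one represents the $i$-th increment and an i.i.d.\ $\td X_1$-distributed variable through a common conditional quantile transform; since \eqref{e.cond-dom} says that the conditional distribution function of $\td X_{i+1} - \td X_i$ dominates that of $\td X_1$ pointwise, the quantile of the former lies below the quantile of the latter, which gives the desired pathwise domination. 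This is exactly the assertion of the lemma.

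For $i = 0$ the estimate \eqref{e.cond-dom} is an identity, since $\td X_0 = 0$. For $i \ge 1$ it is a decoupling statement, obtained by splitting the edge configuration at the (random) level $t$ of $\td X_i$. Writing $e^-$ and $e^+$ for the left and right endpoints of an edge $e$, set, for each $t \ge 0$,
\begin{equation*}
\mcl F_t^- := \si\big(\1\{e \in \mcl E_\infty\} :\ e \in \msc E_\infty,\ e^- < t\big), \qquad
\mcl F_t^+ := \si\big(\1\{e \in \mcl E_\infty\} :\ e \in \msc E_\infty,\ e^- \ge t\big),
\end{equation*}
which are independent by \eqref{e.indep-infty}. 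I would first check that $\{\td X_i = t\}$ is $\mcl F_t^-$-measurable: decomposing this event over the finitely many admissible values of $(\td X_1, \dots, \td X_{i-1})$ in $[\si,t)$, each piece is described by statements of the form ``$x$ is not a $\si$-cutpoint'' for various $x < t$ --- each witnessed by some edge with $e^- < x$ --- together with the statement ``$t$ is a $\si$-cutpoint'', i.e.\ the absence of any edge with $e^- < t$ and $e^+ \ge t + \si$; all of these involve only the indicators $\1\{e \in \mcl E_\infty\}$ with $e^- < t$.

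The genuinely delicate point, and what I would expect to be the main obstacle, is the following identity: on $\{\td X_i = t\}$ --- hence on the event that $t$ is a $\si$-cutpoint --- one has, for every $x \ge t + \si$,
\begin{equation*}
\{x \text{ is a } \si\text{-cutpoint}\} \ =\ \big\{\text{no edge } e \in \mcl E_\infty \text{ satisfies } t \le e^- < x \text{ and } e^+ \ge x + \si\big\},
\end{equation*}
because an edge $e$ with $e^- < t$ and $e^+ \ge x + \si \ge t + 2\si > t + \si$ would by itself contradict $t$ being a $\si$-cutpoint. A priori $\td X_{i+1}$ is sensitive to arbitrarily long edges reaching from the left of $\td X_i$ far to its right --- precisely the edges that also shape the earlier $\td X_j$ --- so the increments are genuinely dependent and the i.i.d.\ sequence appearing in the statement really is only an upper bound; the identity above expresses the fact that the mere circumstance that $\td X_i$ is \emph{itself} a $\si$-cutpoint forbids all such edges, so that on $\{\td X_i = t\}$,
\begin{equation*}
\td X_{i+1} \ =\ \inf\big\{x \ge t + \si :\ \text{no edge } e \in \mcl E_\infty \text{ satisfies } t \le e^- < x \text{ and } e^+ \ge x + \si\big\}
\end{equation*}
is an $\mcl F_t^+$-measurable function of the edge configuration.

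To finish, since $\{\td X_0 = 0, \dots, \td X_i = t\}$ is $\mcl F_t^-$-measurable and hence independent of $\mcl F_t^+$, conditioning on $\td X_0, \dots, \td X_i$ does not change the law of $\td X_{i+1} - t$ on this atom; shifting the edge configuration by $-t$ and using the translation invariance of $\P_\ga$ then gives, on $\{\td X_i = t\}$,
\begin{equation*}
\P_\ga\Ll[\,\td X_{i+1} - \td X_i > s \mid \td X_0, \dots, \td X_i\,\Rr] \ =\ \P_\ga\Ll[\inf\big\{y \ge \si :\ \text{no edge } e \text{ satisfies } 0 \le e^- < y \text{ and } e^+ \ge y + \si\big\} > s\Rr].
\end{equation*}
Since the condition defining $\td X_1$ --- no edge $e \in \mcl E_\infty$ with $e^- < y$ and $e^+ \ge y + \si$ --- is obtained from the one inside the displayed infimum by dropping the restriction $e^- \ge 0$, and thus quantifies over more edges, the displayed infimum is pointwise at most $\td X_1$; hence its upper tail is dominated by that of $\td X_1$, which is precisely \eqref{e.cond-dom}. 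The remaining ingredients --- the measurability bookkeeping for $\mcl F_t^-$ and the passage from \eqref{e.cond-dom} to stochastic domination of the increment sequence by an i.i.d.\ sequence via the conditional quantile coupling --- are routine.
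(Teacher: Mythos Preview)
Your argument is correct and follows essentially the same route as the paper's proof: split the edge configuration at the level $t = \td X_i$, use that $\td X_i$ is itself a $\si$-cutpoint to restrict to edges with $e^- \ge t$, invoke independence of $\mcl F_t^-$ and $\mcl F_t^+$ together with translation invariance, and compare with $\td X_1$ by noting that the unrestricted definition of a $\si$-cutpoint quantifies over more edges. Your write-up is in fact more explicit than the paper's on two points---the $\mcl F_t^-$-measurability of $\{\td X_0 = x_0,\ldots,\td X_i = x_i\}$ and the passage from the conditional tail bound to stochastic domination of the whole sequence via the quantile coupling---but these are exactly the details the paper leaves implicit.
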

\begin{proof}
For every $i \ge 0$, the event $\td X_{i+1} - \td X_i > x$ can be rewritten as
$$
\Ll\{ \forall y \in \{\td X_i + \si,\ldots,\td X_i + x\} \ \exists e = \{e^-,e^+\} \in \mcl E_\infty \text{ s.t. } \ e^- < y \text{ and } e^+ \ge y + \si \Rr\}.
$$
For $i \neq 0$, the point $\td X_i$ is a $\si$-cutpoint, hence the event above is not modified if we add the restriction that $e^- \ge \td X_i$. For any given $x_0, \ldots, x_i$, the event 
$$
\{\td X_0 = x_0, \ldots, \td X_i = x_i\}
$$ 
is a function of $(\1_{e \in \mcl E_\infty})$ over edges $e$ whose left endpoint is strictly below~$x_i$. Hence,
\begin{multline*}
\P_\ga \Ll[ \td X_0 = x_0, \ldots, \td X_i = x_i, \, \td X_{i+1} - \td X_i > x \Rr]  \\ 
\ge \P_\ga \Ll[ \td X_0 = x_0, \ldots, \td X_i = x_i\Rr] \, \P_\ga\Ll[\td X_1 > x \Rr],
\end{multline*}
and the lemma is proved.
\end{proof}
\begin{remark}
In fact, the argument above shows that the random variables $(\td X_{i+1} - \td X_i)_{i \ge 1}$ are i.i.d. We could arrange that $(\td X_{i+1} - \td X_i)_{i \ge 0}$ be i.i.d.\ by choosing to define $\mcl G_\infty$ over the vertex set $\N$ instead of $\Z$. However, we prefer to stick to the present setting, which makes the proofs of Lemmas~\ref{l.tail.X.large} and \ref{l.tail.X.small} slightly more convenient to write.
\end{remark}
We now state an estimate on the tail probability of $\td X_1$ in the case $\ga > 1$, and use it to prove Proposition~\ref{p.upper.large}.
\begin{lemma}[Exponential moments of $\td X_1$ for $\ga > 1$]
\label{l.tail.X.large}
For every $\ga > 1$, there exists $c_0 > 0$ and $C_0 < \infty$ (not depending on $\sigma \ge 1$) such that for every $\theta \le c_0 \si^{\ga-1}$,
$$
\E_\ga \Ll[ \exp \Ll( \theta \td X_1 \Rr)  \Rr] \le  \exp \Ll( C_0 \theta \si \Rr) .
$$
\end{lemma}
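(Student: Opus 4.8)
The plan is to bound $\widetilde X_1$ from above by a sum of i.i.d.\ "escape lengths" associated with successive cutpoint searches, and then exploit that each such escape length has a Gaussian-type tail inherited from the bound $\exp(-|e|^\gamma)$ with $\gamma>1$. Concretely, starting from $0$, we scan positions $y=\sigma,\sigma+1,\dots$ and ask at each $y$ whether $y$ fails to be a $\sigma$-cutpoint, i.e.\ whether there is an edge $e=\{e^-,e^+\}\in\mathcal E_\infty$ with $e^-<y\le e^+-\sigma$. The key observation is that for a single $y$, the probability that $y$ is \emph{not} a $\sigma$-cutpoint is bounded by $\sum_{a\ge 1}\sum_{b\ge \sigma} \exp(-(a+b)^\gamma) \le \sum_{\ell\ge \sigma}\ell\exp(-\ell^\gamma)$, which for $\gamma>1$ decays faster than any exponential in $\sigma$; call this quantity $q(\sigma)$, and note $q(\sigma)\le \exp(-c\sigma^\gamma)$ for some $c>0$ and $\sigma$ large. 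So $\widetilde X_1$ is, roughly, a geometric-type waiting time for a "success" of probability $1-q(\sigma)$, except that the events $\{y \text{ is a }\sigma\text{-cutpoint}\}$ at nearby $y$ are positively correlated.

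To handle the dependence I would use a blocking/renewal argument rather than an exact distributional identity. Partition $\Z_{\ge 0}$ into blocks of length $\sigma$. If within a block $[m\sigma,(m+1)\sigma)$ there is at least one $\sigma$-cutpoint $x$, then $\widetilde X_1 \le x$ provided $x\ge\sigma$ (and the first block plays a special, bounded role). The event that a given block contains \emph{no} $\sigma$-cutpoint is contained in the event that, for every $y$ in that block, some edge straddles $y$ by at least $\sigma$; crucially, for two blocks separated by distance at least $\sigma$ these "bad block" events depend on disjoint (or almost disjoint) families of edges — more precisely, an edge witnessing badness at $y$ has length at least $\sigma$, so it can only affect positions within distance $\sigma$ of its endpoints. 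With a little care (e.g.\ grouping blocks into super-blocks of length $2\sigma$ or $3\sigma$ and using only every other one), one gets a genuinely independent sequence of "bad super-block" indicators, each with probability at most $p(\sigma):=1-(1-q(\sigma))^{\sigma}\le \sigma q(\sigma)\le \exp(-c'\sigma^\gamma)$ for suitable $c'>0$. Hence $\widetilde X_1$ is stochastically dominated by $C\sigma \cdot(1+\mathrm{Geom}(1-p(\sigma)))$ for a constant $C$ absorbing the block width and the first-block correction. Taking exponential moments of a geometric random variable then gives, for $\theta\sigma \le c_0$ (so that $e^{\theta C\sigma}\le 2$, say), that $\E_\gamma[\exp(\theta\widetilde X_1)] \le e^{C\theta\sigma}\cdot\frac{1-p}{1-pe^{C\theta\sigma}} \le \exp(C_0\theta\sigma)$, using $p e^{C\theta\sigma}\le 2p(\sigma)\le \tfrac12$ for $\sigma\ge 1$. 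The condition $\theta\le c_0$ is even stronger than $\theta \le c_0\sigma^{\gamma-1}$ since $\gamma>1$ and $\sigma\ge 1$, so the stated lemma follows; in fact the $\sigma^{\gamma-1}$ slack is what one \emph{could} afford from the faster-than-exponential tail, but one only needs the weaker uniform-in-$\sigma$ statement $\theta\le c_0$.

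The main obstacle is the positive correlation between the "is-a-$\sigma$-cutpoint" events at different positions: the naive computation would treat $\widetilde X_1$ as exactly geometric, which is false. The honest fix is the blocking argument sketched above, and the delicate point there is verifying that one can extract an \emph{independent} subsequence of bad-block events — this requires pinning down exactly which edges can witness badness at a given position (only edges of length $\ge\sigma$ with one endpoint within $\sigma$ of that position) and choosing the block/super-block geometry so that the relevant edge-sets are disjoint, while the overlap edges (those long enough to straddle several blocks) are rare enough to be absorbed into the constants. Everything else — bounding $q(\sigma)$ via $\sum_{\ell\ge\sigma}\ell e^{-\ell^\gamma}$, and the exponential-moment computation for a geometric variable — is routine.
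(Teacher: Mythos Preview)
Your overall strategy---dominate $\td X_1$ by a geometric-type waiting time for the first ``good'' block---is reasonable, but the central independence claim has a genuine gap. You assert that an edge witnessing the failure of the $\si$-cutpoint property at $y$ ``can only affect positions within distance $\si$ of its endpoints''. This is false: an edge $\{a,a+\ell\}$ with $\ell\ge\si$ witnesses the failure at \emph{every} $y\in\{a+1,\ldots,a+\ell-\si\}$, an interval of length $\ell-\si$, which for $\ell\gg\si$ is not contained in any $\si$-neighbourhood of the endpoints. A single edge of length $10\si$ therefore prevents every point in an interval of length $9\si$ from being a $\si$-cutpoint, spoiling several consecutive super-blocks at once. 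No fixed spacing between super-blocks removes this dependence, and since the ``bad block'' events are increasing in the edge configuration, FKG gives positive correlation---the wrong direction for your purposes. The claimed stochastic domination of $\td X_1$ by $C\si$ times one plus a geometric variable is thus unjustified.

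The paper's proof addresses exactly this long-range dependence by replacing fixed-width blocks with a data-dependent ``reach'' construction. One sets $R(x)=\sup\{y\ge 0:\exists z<0 \text{ with }\{x+z,x+y\}\in\mcl E_\infty\}$, checks $\P_\ga[R(0)>r]\le\exp(-cr^\ga)$, and defines $Z_0=\si$, $Z_{i+1}=Z_i+R(Z_i)$, stopping at $I=\inf\{i:R(Z_i)\le\si\}$, so that $\td X_1\le Z_I$. The key point is that, by construction, no edge with left endpoint below $Z_i$ reaches past $Z_{i+1}$, so $R(Z_{i+1})$ depends only on edges with left endpoint in $[Z_i,Z_{i+1})$; these edge families are disjoint across $i$, yielding a genuine stochastic domination of $(R(Z_i))_i$ by i.i.d.\ copies of $R(0)$. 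Exponential-moment bounds on $R(0)$ and a Chernoff argument then finish the proof.

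A smaller point: your final paragraph has the logic reversed. Establishing the bound only for $\theta\le c_0$ is a \emph{weaker} conclusion than the lemma, which requires it for all $\theta\le c_0\si^{\ga-1}$, and the application in Proposition~\ref{p.upper.large} uses precisely $\theta=c_0\si^{\ga-1}$. (Had your geometric domination been valid, the estimate $p(\si)\le\exp(-c'\si^\ga)$ would in fact allow $\theta$ up to order $\si^{\ga-1}$ in the moment-generating-function computation, so this part is repairable.)
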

\begin{proof}
For every $x \in \Z$, we define the \emph{reach} of $x$ in the graph $\mcl G_\infty$ as
\begin{equation}
\label{e.def.R}
R(x) = \sup \{y \ge 0 \ : \ \exists z < 0 \text{ s.t. } \{x+z,x+y\} \in \mcl E_\infty\} \qquad (\ge 0).
\end{equation}
This quantity will be helpful to control $\td X_1$, since  the point $x$ is a $\sigma$-cutpoint if and only if $R(x) < \sigma$; and moreover, the random variables $(R(x))_{x \in \Z}$ are identically distributed. We start by estimating their tail. 
\begin{align*}
\P_\ga[R(0) > r] & \le \sum_{z < 0} \P_\ga[\exists y > r \ : \ \{z,y\} \in \mcl E_\infty] \\
& \le \sum_{z < 0} \sum_{y = r+1}^\infty \exp\Ll(-(y-z)^\ga\Rr) \le C \exp\Ll(-c r^\ga\Rr),
\end{align*}
where the constants $C, c > 0$ depend only on $\gamma$. We can adjust the constant $c > 0$ so that
\begin{equation}
\label{e.tail.R}
\P_\ga[R(0) > r] \le \exp\Ll(-c r^\ga\Rr).
\end{equation}
As a consequence, 
\begin{align*}
\E_\ga \Ll[ \exp \Ll( \theta R(0) \Rr)  \Rr] & \le \exp\Ll(\theta \si\Rr) + \sum_{k = 0}^\infty \exp\Ll(2^{k+1}\theta \si\Rr) \P_\ga \Ll[ 2^k \si < R(0) \le 2^{k+1} \si \Rr]  \\
& \le \exp\Ll(\theta \si\Rr) + \sum_{k = 0}^\infty \exp\Ll(2^{k}\Ll[ 2 \theta \si - c 2^{k(\ga-1)} \si^\ga\Rr]\Rr) .
\end{align*}
Since $\ga > 1$, assuming $\theta \si \le c_1 \si^\ga$ with $c_1>0$ sufficiently small, we have
\begin{equation*}
\E_\ga \Ll[ \exp \Ll( \theta R(0) \Rr)  \Rr] \le  \exp \Ll( 2 \theta \si  + C\Rr) .
\end{equation*}
By Jensen's inequality, for $\theta \le c_1 \si^{\ga-1}$, we can rewrite this estimate in the more convenient form
\begin{align}
\E_\ga \Ll[ \exp \Ll( \theta R(0) \Rr)  \Rr]  & \le \E_\ga \Ll[ \exp \Ll( c_1 \si^{\ga-1}  R(0) \Rr)  \Rr]^{\frac{\theta}{c_1 \si^{\ga-1}}} \notag \\
& \le  \exp \Ll(C_1 \theta \si\Rr) .
\label{e.exp.R}
\end{align}
for some constant $C_1 < \infty$ not depending on $\theta$ or $\sigma$.
We now define inductively $Z_0 = \si$, 
\begin{equation}
\label{e.def.Zi}
Z_{i+1} = Z_i +  R(Z_i),
\end{equation}
and we let
\begin{equation}
\label{e.def.I}
I := \inf\{i \ge 0 :  R(Z_i) \le \si\}.
\end{equation}
The point $Z_i$ is a $\si$-cutpoint if $ R(Z_i) \le \si$, so $\td X_1 \le Z_I$, and we will focus on estimating the exponential moments of $Z_I$. 
By \eqref{e.def.Zi}, no edge $\{e^-,e^+\}$ with  $e^- \le Z_i$ is such that $e^+ > Z_{i+1}$, so
$$
R (Z_{i+1}) = \sup \{e^+ \ge 0 : \exists e^- \in \{Z_{i},\ldots,Z_{i+1}-1\}  \text{ s.t. } \{e^-,Z_{i+1} + e^+\} \in \mcl E_\infty\}.
$$
Conditionally on $R(Z_0), \ldots, R(Z_i)$, the law of the events $(\{e^-, Z_{i+1} + e^+\}\in \mcl E_\infty)$ for $e^-, e^+$ as above are independent, and each has probability $\exp\Ll(-|e|^\ga \Rr)$. Hence, the sequence $(R(Z_i))_{i \in \N}$ is stochastically dominated by a sequence $(R'_i)_{i \in \N}$ of i.i.d.\ random variables distributed as $R(0)$. Letting 
\begin{equation}
\label{e.def.Z'i}
Z'_i =  \si + \sum_{j = 0}^{i-1} R'_j
\end{equation}
and
\begin{equation}
\label{e.def.I'}
I' = \inf \{i \ge 0 : R'_i \le \si\},
\end{equation}
we also have that $Z_I$ is stochastically dominated by $Z'_{I'}$. Our task is thus reduced to evaluating the tail of $Z'_{I'}$. We note that by \eqref{e.tail.R},
\begin{equation}
\label{e.tail.I}
\P_\ga[I' \ge i] = \Ll( \P_\ga \Ll[ R(0) > \si \Rr]  \Rr)^i \le \exp \Ll( -ci\si^\ga \Rr) ,
\end{equation}
and decompose
\begin{align*}
\E_\ga \Ll[ \exp \Ll( \theta Z_I \Rr)  \Rr] & \le \E_\ga \Ll[ \exp \Ll( \theta Z'_{I'} \Rr)  \Rr]  \\
& \le \exp((2^{k_0} + 1)\theta \si) + \sum_{k = k_0}^\infty \exp \Ll( 2^{k+1} \theta \si \Rr) \P_\ga \Ll[ 2^k \si \le Z'_{I'} - \si < 2^{k+1} \si \Rr].
\end{align*}
where $k_0$ is chosen as the smallest integer such that $2^{k_0} \ge 2 C_1$, the constant $C_1$ being that appearing in \eqref{e.exp.R}. 
We have
$$
\P_\ga \Ll[ Z'_{I'} - \si \ge 2^k \si \Rr]  \le \P_\ga [ I' \ge 2^{k-k_0}] + \P_\ga \Ll[ \sum_{j = 0}^{2^{k-k_0}-1} R_j'\ge 2^k\si \Rr]. 
$$
The first term is estimated by \eqref{e.tail.I}. In order to control the second term, we assume that $ \theta  \le \frac{c_1}{8} \si^{\ga-1}$, and use Chebyshev's inequality, independence of the summands and \eqref{e.exp.R} to get
\begin{align*}
\P_\ga \Ll[ \sum_{j = 0}^{2^{k-k_0}-1} R_j'\ge 2^k\si \Rr] & \le \frac{\{\E_\ga[\exp(8\theta R(0))]\}^{2^{k-k_0}}}{\exp(2^{k+3} \theta \si)} \\
& \le  \exp \Ll( 2^{k-k_0+3}C_1 \theta \si - 2^{k+3} \theta \si \Rr)  \\
& \le \exp \Ll( -2^{k+2} \theta \si \Rr) ,
\end{align*}
where we used the definition of $k_0$ in the last step.
We thus obtain, for $\theta  \le \frac{c_1}8 \si^{\ga-1}$, that
\begin{multline*}
\E_\ga \Ll[ \exp \Ll( \theta Z_I \Rr)  \Rr]  \le \exp((2^{k_0} + 1)\theta \si) \\
 + \sum_{k = k_0}^\infty \exp \Ll( 2^{k+1} \theta \si \Rr) \Ll\{ \exp(-c2^{k-k_0} \si^\ga) +  \exp \Ll( -2^{k+2} \theta \si  \Rr) \Rr\} ,
\end{multline*}
and this yields the desired result.
\end{proof}
\begin{proof}[Proof of Proposition~\ref{p.upper.large}]
We begin with part (1) of the proposition. We denote by $c_0$ and $C_0$ the constants appearing in Lemma~\ref{l.tail.X.large}. 
Let $m$ be an integer that will be fixed later in terms of $C_0$ only. 
By Chebyshev's inequality, Lemma~\ref{l.coupling} and Lemma~\ref{l.tail.X.large} with $\theta = c_0 \si^{\ga-1}$, 
\begin{align*}
\P_\ga \Ll[ \td X_{mN^\al} \ge N \Rr] & \le \frac{ \Ll[ \exp \Ll( C_0 c_0 \si^\ga \Rr)  \Rr]^{mN^\al}  }{\exp \Ll( c_0 N \si^{\ga-1} \Rr) } \\
& = \exp \Ll\{- c_0 N^\al \si^\ga  \Ll( \frac{N^{1-\al}}{\si} - C_0 m\Rr)  \Rr\}.
\end{align*}
Fixing $\si =  N^{1-\al}/(2C_0m)$ (which is greater than $1$ for $N$ sufficiently large, since $\al < 1$), we obtain 
$$
\P_\ga \Ll[ \td X_{mN^\al} \ge N \Rr] \le \exp \Ll( -c_1 N^{\al + \ga(1-\al)} \Rr),
$$
for some $c_1 > 0$. 
By \eqref{e.coupling}, on the event $\td X_{mN^\al} < N$, we have $X_{mN^\al} < N$ and thus
$T \ge mN^\al$. On this event, since $X_{i+1} - X_i \ge \si = N^{1-\al}/(2C_0m)$, we also have
$$
X_{mN^\al/3} \ge \frac 1 {6C_0} N \text{ and } N-X_{2mN^\al/3} \ge X_{mN^\al} - X_{2mN^\al/3} \ge \frac 1 {6C_0} N.
$$
By Lemma~\ref{l.T-to-H}, we thus have
$$
\td X_{mN^\al} < N \quad \implies \quad \H(\GN) \ge \frac {2m} {3\cdot (6C_0)^2} N^\al.
$$
Choosing $m = 3 \cdot (6C_0)^2/2$, we obtain
$$
\P_\ga[\H(\GN) \le N^\al] \le \P_\ga[\td X_{mN^\al} \ge N] \le \exp \Ll( -c_1 N^{\al + \ga(1-\al)} \Rr) ,
$$
which proves part (1). 
The proof of part (2) is identical, except that we choose $\si = 1$ throughout. 
\end{proof}

We now turn to the proof of Proposition~\ref{p.upper.small}, that is, we now focus on the case $\ga < 1$. \emph{From now on, we fix $\si = 1$ and call a $1$-cutpoint simply a cutpoint}. If $I$ is an integer interval, we say that a point $x \in I$ is a \emph{local cutpoint in $I$} (for the graph $\GN$) if whenever an edge $e \in \EN$ goes above $x$, none of its endpoints is in $I$, that is,
\begin{equation*}  
\{ e = \{e^-, e^+\} \in \EN \text{ s.t. } e^- < x < e^+ \text{ and } \{e^-, e^+\} \cap I \neq \emptyset\} = \emptyset.
\end{equation*}
We first give a substitute to Lemma~\ref{l.T-to-H} adapted to this notion.

\begin{lemma}[average path length via local cutpoints]
\label{l.local.to.H}
Let $I\subset V_N$ be an integer interval, and $T$ denote the number of local cutpoints in $I$. We have
\begin{equation*}  
\sum_{x,y \in I} \d_\GN(x,y) \ge \frac {T^3}{63}.
\end{equation*}
If $I, I' \subset V_N$ are two disjoint integer intervals, and if $T$ is the minimum between the number of local cutpoints in $I$ and in $I'$, then we also have
\begin{equation*}  
\sum_{x\in I, y \in I'} \d_\GN(x,y) \ge \frac {T^3}{63}.
\end{equation*}
\end{lemma}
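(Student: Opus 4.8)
\textbf{Proof plan for Lemma~\ref{l.local.to.H}.} The plan is to mimic the cutpoint argument of Lemma~\ref{l.T-to-H} at the level of a single interval, the key point being that local cutpoints in $I$ play, within $I$, the role that $\sigma$-cutpoints played globally. First I would enumerate the local cutpoints in $I$ in increasing order as $x_1 < x_2 < \cdots < x_T$ and observe the separating property: if $a, b \in I$ with $a < x_i < x_j \le b$ (or $\le$ reversed appropriately), then any path in $\GN$ from $a$ to $b$ must pass through every intermediate local cutpoint, because no edge of $\EN$ with an endpoint in $I$ can jump over a local cutpoint of $I$ — wait, more carefully, a path from $a$ to $b$ with $a,b \in I$ uses edges, and to get past $x_\ell$ it needs an edge $e = \{e^-,e^+\}$ with $e^- < x_\ell < e^+$; by definition of local cutpoint such an edge has no endpoint in $I$, but $a \in I$ forces the first edge leaving $a$ to have an endpoint ($a$ itself) in $I$, so the path cannot leave $I$ before reaching $x_\ell$, hence it must actually visit $x_\ell$. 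Consequently $\d_\GN(a,b) \ge j - i$ whenever $x_i, x_j$ both lie strictly between $a$ and $b$.

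From this I would extract the quantitative bound by summing. Set $m = \lfloor T/3 \rfloor$ and consider the ``left third'' $\{x_1,\ldots,x_m\}$ and ``right third'' $\{x_{2m+1},\ldots,x_T\}$ of local cutpoints. For each pair $a = x_i$ with $i \le m$ and $b = x_j$ with $j \ge 2m+1$ we have $\d_\GN(a,b) \ge j - i \ge 2m+1 - m = m+1 \ge T/3$. There are at least $m \cdot (T - 2m) \ge (T/3)(T/3) = T^2/9$ such ordered pairs $(a,b)$ (using $m \ge T/3 - 1$ and $T - 2m \ge T/3$, with a small adjustment of constants to absorb the floor), hence
\[
\sum_{x,y \in I} \d_\GN(x,y) \;\ge\; \sum_{i \le m}\ \sum_{j \ge 2m+1} \d_\GN(x_i,x_j) \;\ge\; \frac{T^2}{9}\cdot\frac{T}{3} \;=\; \frac{T^3}{27},
\]
and a slightly more careful bookkeeping of the floor functions (or simply being generous, e.g.\ assuming $T \ge$ some small constant and handling small $T$ trivially since the left side is $\ge 0$) yields the stated constant $1/63$. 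For the two-interval statement, I would do the same, taking $a$ ranging over the, say, leftmost $\lfloor T/2\rfloor$ local cutpoints of $I$ and $b$ over the rightmost $\lfloor T/2 \rfloor$ local cutpoints of $I'$; here one must check that a path from $a \in I$ to $b \in I'$ still visits all the intermediate local cutpoints of $I$ lying to the right of $a$ — this holds because such a path, to leave $I$, must first reach a local cutpoint of $I$ or an endpoint in $I$ of a jumping edge, and as before it cannot jump over a local cutpoint of $I$ without having an endpoint in $I$; so $\d_\GN(a,b)$ is at least the number of local cutpoints of $I$ strictly between $a$ and the right end of $I$, giving again a cubic lower bound after summing.

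The main obstacle I anticipate is the care needed in the separating argument, specifically verifying that a path between two points of $I$ (or of $I$ and $I'$) genuinely cannot ``tunnel under'' a local cutpoint by using a single long edge that stays entirely outside $I$: one has to argue that such a detour is impossible precisely because the path starts at a vertex inside $I$, so its very first edge has an endpoint in $I$, and then inductively the path can only leave $I$ through edges with an endpoint in $I$, none of which straddles a local cutpoint of $I$. Once this topological fact is pinned down cleanly, the counting is routine and only the constant requires attention; I would not optimize it, just verify $T^3/63$ is safely attained (e.g.\ the crude split above gives $T^3/27$ for $T$ large, and for small $T$ the inequality is vacuous or trivial, so $1/63$ is comfortable).
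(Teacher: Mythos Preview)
Your separating argument has a genuine gap. You claim that a path in $\GN$ from $a\in I$ to $b\in I$ must visit every local cutpoint $x_\ell$ lying between them, because ``the path cannot leave $I$ before reaching $x_\ell$''. This is false: the path can exit $I$ on the side of $a$ via an edge that does not straddle $x_\ell$, then take an edge with \emph{both} endpoints outside $I$ to jump over all of $I$, and re-enter on the side of $b$. Concretely, take $I=\{10,\ldots,20\}$ with every point of $I$ a local cutpoint, and $\{10,5\},\{5,25\},\{25,20\}\in\EN$; none of these violates the local-cutpoint condition (the long edge $\{5,25\}$ has no endpoint in $I$), yet $\d_\GN(10,20)\le 3$, not $\ge T-1=10$. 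Your inductive reasoning in the final paragraph breaks exactly here: once the path is outside $I$, the local-cutpoint constraint says nothing about the edges it may use, so the ``tunnelling'' you worried about is real. The same objection applies to your two-interval argument.

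The paper's proof handles this explicitly. With $Y_1<\cdots<Y_T$ the local cutpoints and $Y_{j-1}\le x<Y_j<Y_{j'}\le y<Y_{j'+1}$, a path staying in $I$ has length $\ge j'-j$, while a path using the outside detour must first reach a point $\le Y_1$ within $I$ (cost $\ge j-1$), exit, jump across, re-enter to the right of $Y_T$, and then reach $y$ within $I$ (cost $\ge T-j'$), so its length is $\ge T-(j'-j)$. The correct lower bound is therefore $\d_\GN(x,y)\ge(j'-j)\wedge\bigl(T-(j'-j)\bigr)$. With only this weaker bound your thirds split is useless (at $j\approx 1$, $j'\approx T$ the minimum is $O(1)$); the paper instead restricts to $T/5\le j\le 2T/5$ and $3T/5\le j'\le 4T/5$, where both branches of the minimum are $\ge T/5$, and the telescoping sums $\sum(Y_j-Y_{j-1})$ over each range contribute $\ge T/5$, yielding $2(T/5)^3\ge T^3/63$.
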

\begin{proof}
We only prove the first statement; it will be clear that the proof applies to the second statement as well. 
Let $Y_1 <  \cdots < Y_T$ be an enumeration of the local cutpoints in $I$. Assume that for $1 < j < j' < T$ and $x, y \in I$, we have
\begin{equation*}  
Y_{j-1} \le x < Y_j < Y_{j'} \le y < Y_{j'+1}.
\end{equation*}
As was seen in the proof of Lemma~\ref{l.T-to-H}, if a path joins $x$ to $y$ without exiting $I$, then its length is at least $j'-j$. 

By the definition of $Y_1$, there is no edge linking a point outside of $I$ to a point $x'$ such that $x' > Y_1$. Similarly, there is no edge linking a point $y' < Y_T$ to a point outside of $I$. As a consequence, a path joining $x$ to $y$ faces the following alternative:
\begin{enumerate}
\item go from $x$ to $y$ without exiting $I$;
\item go through a number of excursions to the left of $I$, then reenter $I$ to the left of $Y_1$ and go to $y$ without further exiting $I$;
\item go through a number of excursions to the left of $I$, then jump directly from the left of $I$ to the right of $I$ and do a number of excursions to the right of~$I$, possibly several times jumping back and forth to the left and to the right of $I$, and then finally enter $I$ to the right of $Y_T$ and connect with $y$.
\end{enumerate}
Since we want to find a lower bound on the length of such a path, it suffices to consider the following cases:
\begin{enumerate}
\item the path goes from $x$ to $y$ without exiting $I$;
\item the path first reaches a point $x' \le Y_1$ while staying in $I$, then exits $I$ to its left, then jump to the right of $I$, then reaches $y' \ge Y_T$, and finally reaches~$y$ while staying in $I$. 
\end{enumerate}
We already found the lower bound $j'-j$ for the first scenario. In the second case, the length of the path is at least $(j-1) + 1 + 1 + 1 + (T-j'-1) \ge T-(j'-j)$. Therefore,
\begin{align*}
\sum_{x,y \in I} \d_\GN(x,y) & \ge 2 \sum_{1 \le j < j'\le T} \ \sum_{\substack{ Y_{j-1} \le x < Y_j \\ Y_{j'}\le y < Y_{j'+1} }} \d_\GN(x,y) \\
& \ge 2 \sum_{1 \le j < j'\le T} (Y_j - Y_{j-1})(Y_{j'+1} - Y_{j'}) [(j'-j)\wedge(T-j'+j)].
\end{align*}
Restricting the sum to indices such that 
\begin{equation*}  
\frac T 5 \le j \le \frac {2T} 5 \ \ \text{ and } \ \ \frac {3T}5 \le j' \le \frac{4T}{5},
\end{equation*}
and observing that $Y_{2T/5} - Y_{T/5} \ge T/5$ and $Y_{4T/5} - Y_{3T/5} \ge T/5$, we obtain the result.
\end{proof}

We now estimate the tail probability of $\td X_1$ (recall that we fixed $\si = 1$).
\begin{lemma}[Exponential moment of $\td X_1$ for $\ga < 1$] 
\label{l.tail.X.small}
For every $\ga < 1$, there exists $\theta > 0$ such that 
$$
\E_\ga \Ll[ \exp \Ll( \theta \td X_1^\ga \Rr)  \Rr] < \infty.
$$
\end{lemma}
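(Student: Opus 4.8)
The plan is to prove the exponential-moment bound for $\td X_1^\ga$ by first controlling the tail of $\td X_1$ itself (with $\si = 1$), and then integrating. Recall that $\td X_1$ is the first cutpoint of $\mcl G_\infty$ at distance at least $1$ from the origin, so $\{\td X_1 > x\}$ requires that every point $y \in \{1, \ldots, x\}$ fails to be a cutpoint, i.e.\ for each such $y$ there is an edge $e = \{e^-, e^+\} \in \mcl E_\infty$ with $e^- < y \le e^+$ (using $\si = 1$). We will reuse the \emph{reach} $R(x)$ from \eqref{e.def.R}, whose tail \eqref{e.tail.R} reads $\P_\ga[R(0) > r] \le \exp(-c r^\ga)$; here a point $x$ is a cutpoint iff $R(x) < 1$, i.e.\ iff $R(x) = 0$. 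The natural strategy, mirroring the proof of Lemma~\ref{l.tail.X.large}, is to build the dominating renewal chain $Z_0 = 1$, $Z_{i+1} = Z_i + \max(R(Z_i), 1)$ (so the chain strictly advances), stopped at $I := \inf\{i : R(Z_i) = 0\}$, which gives $\td X_1 \le Z_I$; and $Z_I$ is stochastically dominated by $\si + \sum_{j < I'} R'_j$ with $(R'_j)$ i.i.d.\ copies of $R(0)$ (truncated below at $1$) and $I'$ geometric with success probability $\P_\ga[R(0) = 0] =: q \in (0,1)$.

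Next I would show that $\td X_1$ has a stretched-exponential tail of exponent $\ga$: namely that there is $c' > 0$ with $\P_\ga[\td X_1 > x] \le C' \exp(-c' x^\ga)$ for all $x \ge 1$. The decomposition $\P_\ga[Z_I > x] \le \P_\ga[I' > x^{\ga}] + \P_\ga[\sum_{j=0}^{\lfloor x^\ga\rfloor} R'_j > x]$ handles the two sources of largeness: the first term is geometric, $\le (1-q)^{x^\ga} \le \exp(-c x^\ga)$; for the second term, since each $R'_j$ has a tail of exponent $\ga < 1$, a sum of $\sim x^\ga$ of them exceeds $x$ only if one summand is of order $x$ (subexponential / heavy-tail-for-sums heuristics), giving a bound $\le x^\ga \cdot \P_\ga[R(0) \gtrsim x] \le \exp(-c'' x^\ga)$ after absorbing the polynomial factor. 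Combining, $\P_\ga[\td X_1 > x]$ decays like $\exp(-c' x^\ga)$. Finally, the exponential moment follows by the layer-cake formula: for $\theta < c'$,
\[
\E_\ga\Ll[\exp(\theta \td X_1^\ga)\Rr] = 1 + \theta \ga \int_0^\infty x^{\ga - 1} e^{\theta x^\ga} \P_\ga[\td X_1 > x]\,\d x \le 1 + C'\theta\ga \int_0^\infty x^{\ga-1} e^{-(c'-\theta)x^\ga}\,\d x < \infty,
\]
which is exactly the claimed finiteness.

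The main obstacle is the second-term estimate: controlling $\P_\ga[\sum_{j=0}^{n} R'_j > x]$ with $n \asymp x^\ga$ when the summands $R'_j$ themselves have only a stretched-exponential tail of the same exponent $\ga$. One cannot use an exponential Chebyshev bound with a fixed exponential rate, because $\E_\ga[\exp(\la R(0))] = \infty$ for every $\la > 0$ when $\ga < 1$; instead one must either (i) truncate $R'_j$ at level $\delta x$ and treat the (rare) event that some summand exceeds $\delta x$ separately via a union bound over the $n$ summands — this contributes $n\,\P_\ga[R(0) > \delta x] \lesssim x^\ga e^{-c(\delta x)^\ga}$, which is fine — while on the truncated event $\sum \min(R'_j, \delta x)$ has bounded exponential moments and an ordinary Chebyshev bound gives $\exp(-c x^\ga)$ once $\delta$ is small; or (ii) invoke directly a subexponential large-deviation principle for sums. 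Care is needed in choosing $\delta$ and the constants so that all exponents come out proportional to $x^\ga$, and in checking that the number of summands is genuinely $O(x^\ga)$ (this is where $\P_\ga[R(0) = 0] > 0$, hence the geometric tail of $I'$, is used). Everything else — the coupling, the stochastic domination by i.i.d.\ reaches, and the layer-cake integration — is routine and parallels Lemma~\ref{l.tail.X.large}.
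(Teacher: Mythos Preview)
Your proposal is correct and shares the same skeleton as the paper's proof: the reach $R(x)$ and its tail~\eqref{e.tail.R}, the renewal chain $(Z_i)$ with $\td X_1 \le Z_I$, stochastic domination of $(R(Z_i))$ by i.i.d.\ copies $(R'_j)$ of $R(0)$, and a split according to whether $I'$ is large or the partial sum $\sum_j R'_j$ is large. The differences are in the implementation. You place the threshold for $I'$ at $x^\ga$, so the geometric tail of $I'$ gives $e^{-cx^\ga}$ directly, while the remaining sum term $\P_\ga\bigl[\sum_{j<x^\ga} R'_j > x\bigr]$ is a ``few summands, huge value'' deviation that you handle by a truncation-plus-Chernoff argument (your option~(i)); this works, with the right choice being $\lambda \asymp (\delta x)^{\ga-1}$ so that $\E_\ga[e^{\lambda \min(R',\delta x)}] \le 1 + C\lambda$ and the Chernoff exponent is $-\lambda x + O(\lambda x^\ga) \asymp -x^\ga$. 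The paper instead places the threshold at $I' > x/C_0$ with $C_0 > \E_\ga[R(0)]$, so the sum term becomes an LLN-type deviation $\P_\ga\bigl[\sum_{j<x/C_0} R'_j > x\bigr]$, bounded by the packaged stretched-exponential large-deviation estimate of Proposition~\ref{p.ldp-stretch}; it then bounds $\E_\ga\bigl[\exp(\theta (Z'_{I'})^\ga)\bigr]$ directly via a dyadic decomposition rather than first proving a tail bound and applying the layer-cake formula. Both routes yield the same conclusion; the paper's is slightly shorter because it reuses the appendix proposition, while yours is self-contained.
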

\begin{proof}
We first recall some elements of the proof of Lemma~\ref{l.tail.X.large}. 
We define $R(x)$ as in \eqref{e.def.R}, and observe that the estimate \eqref{e.tail.R} still holds under our present assumption $\ga < 1$. We also define $(Z_i)$ and $I$ as in \eqref{e.def.Zi} and \eqref{e.def.I} respectively (with $\si = 1$). We have that $\td X_1 \le Z_I$, and that the sequence $(R(Z_i))$ is stochastically dominated by a sequence $(R'_i)_{i \in \N}$ of i.i.d.\ random variables distributed as $R(0)$. We define $(Z'_i)$ by \eqref{e.def.Z'i} and $I'$ by \eqref{e.def.I'}, and recall that $Z_I$ is stochastically dominated by $Z'_{I'}$. 

As in Lemma~\ref{l.tail.X.large}, our final goal is to estimate the exponential moments of $Z'_{I'}$. We start by estimating those of $R(x)$:
\begin{align*}
\E_\ga \Ll[ \exp \Ll( \theta R(0)^\ga \Rr)  \Rr] & \le \exp\Ll(\theta \Rr) + \sum_{k = 0}^\infty \exp\Ll(\theta2^{\ga(k+1)} \Rr) \P_\ga \Ll[ 2^k  < R(0) \le 2^{k+1}  \Rr]  \\
& \le \exp\Ll(\theta \Rr) + \sum_{k = 0}^\infty \exp \Ll[-2^{\ga k} \Ll( c-2^{\ga} \theta \Rr) \Rr] .
\end{align*}
For $\theta >0$ sufficiently small, we thus have 
$$
\E_\ga \Ll[ \exp\Ll(\theta R(0)^\ga \Rr) \Rr]  < \infty.
$$
By Proposition~\ref{p.ldp-stretch} of the Appendix, letting $C_0 := \E_\ga[R(0)] + 1$, there exists $c_0 > 0$ such that 
\begin{equation}
\label{e.bound.sum}
\P_\ga \Ll[ \sum_{j = 0}^{i-1} R'_j \ge C_0 i \Rr] \le \exp(-c_0 i^\ga).
\end{equation}
Recall from \eqref{e.tail.R} that
$$
\P_\ga[R(0) > 1] \le \exp(-c) < 1,
$$
and thus
\begin{equation}
\label{e.bound.iprime}
\P_\ga[I'\ge i] \le \exp(-ci).
\end{equation}
We now write
$$
\E_\ga \Ll\{ \exp \Ll[ \theta (Z'_{I'})^\ga \Rr]  \Rr\} \le \exp(\theta) + \sum_{k = 0}^\infty \exp\Ll(\theta 2^{\ga(k+1)} \Rr) \P_\ga\Ll[2^k \le Z'_{I'} - 1 < 2^{k+1} \Rr],
$$
and bound the probability on the right-hand side by
$$
\P_\ga \Ll[ Z'_{I'} -1 \ge 2^{k} \Rr] \le \P_\ga[I' > i] + \P_\ga \Ll[ \sum_{j = 0}^{i-1} R'_j \ge 2^{k}  \Rr].
$$
The estimate above is valid for every $i$. We choose $i = 2^k/C_0$, so that the second term on the \rhs\ is bounded by \eqref{e.bound.sum}. Using \eqref{e.bound.iprime} on the first term, we obtain
\begin{multline*}
\E_\ga \Ll\{ \exp \Ll[ \theta (Z'_{I'})^\ga \Rr]  \Rr\} \\
\le \exp(\theta) + \sum_{k = 0}^\infty \exp\Ll(\theta 2^{\ga(k+1)} \Rr) \Ll[ \exp \Ll( -c \frac { 2^k}{C_0} \Rr) + \exp \Ll( -c_0    \frac{2^{\ga k}}{C_0^\ga} \Rr)   \Rr] ,
\end{multline*}
and the latter series is finite when $\theta > 0$ is sufficiently small.
\end{proof}
\begin{cor}
\label{c.cutpoints}
For every $\ga < 1$, there exists $c_1 > 0$ such that
$$
\P_\ga \Ll[ \Ll| \Ll\{ x \in \{0,\ldots,N-1\} \ : \ x \mbox{ is a cutpoint in $\mcl G_\infty$} \Rr\}  \Rr| < c_1 N \Rr] \le \exp \Ll( -c_1 N^\ga \Rr) .
$$
In particular,
$$
\P_\ga\Ll[\GN \mbox{ has less than $ c_1N$ cutpoints}\Rr] \le \exp \Ll( -c_1 N^\ga \Rr) .
$$
\end{cor}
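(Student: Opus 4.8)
The plan is to reduce the statement to a sum of i.i.d.\ random variables and then apply the stretched-exponential large deviation estimate of the Appendix (Proposition~\ref{p.ldp-stretch}), exactly as in the proof of Lemma~\ref{l.tail.X.small}. Recall that with $\si = 1$ fixed, the sequence $(\td X_i)_{i \ge 0}$ defined on $\mcl G_\infty$ has increments $(\td X_{i+1} - \td X_i)$ that are stochastically dominated (Lemma~\ref{l.coupling}), indeed i.i.d.\ for $i \ge 1$, by copies of $\td X_1$; and by Lemma~\ref{l.tail.X.small}, $\td X_1$ has a finite exponential moment of its $\ga$-th power, so in particular $\E_\ga[\td X_1] =: \mu < \infty$. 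The key observation is that every point of the form $\td X_i$ with $\td X_i \le N-1$ is a cutpoint of $\mcl G_\infty$ lying in $\{0,\ldots,N-1\}$; hence the number of cutpoints in $\{0,\ldots,N-1\}$ is at least $\#\{i \ge 0 : \td X_i \le N-1\}$. So it suffices to show that with overwhelming probability $\td X_i \le N-1$ for some $i$ of order $N$.

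First I would fix a constant $c_1 \in (0, 1/(2\mu))$, say, and estimate
$$
\P_\ga\Ll[ \td X_{\lceil c_1 N \rceil} \ge N \Rr] \le \P_\ga\Ll[ \sum_{i=0}^{\lceil c_1 N\rceil - 1} W_i \ge N \Rr],
$$
where $(W_i)$ are i.i.d.\ copies of $\td X_1$ dominating the increments $(\td X_{i+1}-\td X_i)$ (the $i = 0$ term being handled either by the domination of Lemma~\ref{l.coupling} or, at the cost of an irrelevant shift, absorbed into the bound). Since $\E_\ga[\sum_{i<\lceil c_1 N\rceil} W_i] \le \mu c_1 N + O(1) \le N/2$ for $N$ large, this is a genuine large deviation event, and by Proposition~\ref{p.ldp-stretch} applied to $(W_i)$ (which have a finite exponential moment of their $\ga$-th power by Lemma~\ref{l.tail.X.small}), with $C_0 := \mu + 1$ and the number of summands of order $N$, we get
$$
\P_\ga\Ll[ \td X_{\lceil c_1 N\rceil} \ge N \Rr] \le \exp\Ll( -c N^\ga \Rr)
$$
for some $c > 0$. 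On the complementary event $\td X_{\lceil c_1 N\rceil} < N$, we have $\td X_i \le N-1$ for all $i \le \lceil c_1 N\rceil$, so $\mcl G_\infty$ has at least $c_1 N$ cutpoints in $\{0,\ldots,N-1\}$; adjusting $c_1$ downward if needed so that $c_1 \le c$ gives the first displayed inequality of the corollary. The second statement follows immediately from the coupling $\mcl E_N \subset \mcl E_\infty$: every cutpoint of $\mcl G_\infty$ in $\{0,\ldots,N-1\}$ is a $1$-cutpoint of $\GN$, so $\GN$ has at least as many cutpoints, and the same bound applies.

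The only mild subtlety — and the step I would be most careful about — is matching the precise hypotheses of Proposition~\ref{p.ldp-stretch} from the Appendix: one needs the summands to be nonnegative with a finite moment generating function of $(\cdot)^\ga$, which Lemma~\ref{l.tail.X.small} supplies, and one needs the deviation threshold to be a fixed multiple $C_0 > \E_\ga[\td X_1]$ of the number of terms, which is why $c_1$ must be chosen strictly below $1/\mu$ (so that $N$ exceeds $C_0 \lceil c_1 N\rceil$ for large $N$). There is no real obstacle here: this is the same mechanism already used to derive \eqref{e.bound.sum} inside the proof of Lemma~\ref{l.tail.X.small}, now applied directly rather than as an intermediate step, and the $i=0$ boundary term is negligible. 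Everything else is bookkeeping with constants.
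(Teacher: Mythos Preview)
Your proposal is correct and follows essentially the same route as the paper's proof: reduce to showing $\P_\ga[\td X_{cN} \ge N] \le \exp(-cN^\ga)$ for some small $c>0$, and deduce this from Lemmas~\ref{l.coupling} and~\ref{l.tail.X.small} combined with Proposition~\ref{p.ldp-stretch}. The paper states this in two lines without spelling out the choice of $c_1$ or the handling of the $i=0$ increment, but your added detail is accurate and the second statement via the coupling $\mcl E_N \subset \mcl E_\infty$ is exactly the intended argument.
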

\begin{proof}
In order to prove the corollary, it suffices to see that for some $c > 0$ sufficiently small,
$$
\P_\ga \Ll[ \td X_{cN} \ge N \Rr] \le \exp \Ll( -c N^\ga \Rr) .
$$
This is a consequence of Lemmas~\ref{l.coupling}, \ref{l.tail.X.small} and Proposition~\ref{p.ldp-stretch}.
\end{proof}
We are now ready to complete the proof of Proposition~\ref{p.upper.small}. In this proof, we will consider integer intervals $I \subset V_N$, and discuss the notion of being a cutpoint in the graph induced by the vertex set $I$. Before going to the details, we wish to emphasize that this notion is defined only in terms of edges with $\emph{both}$ endpoints in $I$. It is therefore different from the notion of being a local cutpoint in $I$ (for the graph~$\GN$), since in the latter case, every edge having \emph{at least one} endpoint in $I$ matters.
\begin{proof}[Proof of Proposition~\ref{p.upper.small}]
We fix $c_1 > 0$ as in Corollary~\ref{c.cutpoints}. Note that since we fixed $\si = 1$, the sequence $(X_i)_{i \ge 1}$ is just enumerating the sequence of cutpoints. By Lemma~\ref{l.local.to.H}, we have
\begin{equation}
\label{e.cut-imp-H}
\GN \mbox{ has at least $c_1 N$ cutpoints} \implies \H(\GN) \ge  \frac{c_1^3}{63}  \ N. 
\end{equation}
Hence, part (2) of the proposition is a consequence of Corollary~\ref{c.cutpoints}.

We now turn to part (1). 
Throughout the argument, we denote by $c > 0$ a generic constant whose value may change from place to place to be as small as necessary, and is not allowed to depend on $N$.

We partition $V_N$ into $K := N^{1-\al}/3$ integer intervals of length $3 N^\al$, which we denote by $I_1, \ldots, I_K$. For each $k \in \{1,\ldots,K\}$, we denote by $J_k$ the middle third interval in $I_k$.  Let $\msc C_k$ be the set of cutpoints induced by the vertex set $I_k$, and let $\mcl C_k$ denote the event that
\begin{equation*}  
| J_k \cap \msc C_k  |  \ge  c_1  N^\al.
\end{equation*}
By construction, the events $(\mcl C_k)_{1 \le k \le K}$ are independent. Moreover, each has probability at least 
$
1-\exp\Ll(-c_1  N^{\al \ga}\Rr),
$ 
by Corollary~\ref{c.cutpoints}. Consequently, the probability that 
\begin{equation}
\label{e.good.event}
\Ll| \Ll\{ k \in \{1,\ldots,K\} \ : \ \mcl C_k \text{ holds} \Rr\}   \Rr| \ge \frac K 2
\end{equation}
is at least
\begin{equation*}  
1 - \exp \Ll( -c N^{1-\al+\al\ga} \Rr) ,
\end{equation*}
by a standard calculation (see e.g.\ \cite[(2.15)-(2.16)]{lyap}). We may therefore assume that the event \eqref{e.good.event} holds.

Let $\mcl B$ denote the event
\begin{equation*}
\Ll| \Ll\{ e \in \EN \ : \ |e| \ge  N^\al \Rr\}  \Rr| \le \frac{N^{1-\al}}{20}.
\end{equation*}
We now argue that for some $c > 0$,
\begin{equation}
\label{e.prob.B}
\P_\ga[\mcl B] \ge 1 - \exp \Ll( -c N^{1-\al(1-\ga)} \Rr) .
\end{equation}
In order to do so, we use independence to note that there exists a constant $C < \infty$ such that for every $\lambda \in \Ll[0,\frac 1 2 N^{\al \gamma}\Rr] $, we have
\begin{equation*}  
\E \Ll[ \exp \Ll( \lambda \sum_{|e| \ge N^\al} \1_{e \in \EN} \Rr)  \Rr] \le \Ll(1 + \exp \Ll( \lambda - N^{\al \gamma}\Rr) \Rr)^{N^2} \le C,
\end{equation*}
and therefore, by Chebyshev's inequality,
\begin{equation*}  
1 - \P_\ga[\mcl B] \le C \exp \Ll( - \frac {N^{\al \gamma}} 2 \, \frac{N^{1-\al}}{20} \Rr) ,
\end{equation*}
so that \eqref{e.prob.B} is proved.


From now on, we therefore assume that both the event $\mcl B$ and the event in \eqref{e.good.event} are realized, and show that this implies $\H(\GN) \ge c N^\al$.

Denote the set of endpoints of edges with length at least $ N^\al$ by
$$
\msf{End}_N := \{x \in V_N \ : \ \exists y \text{ s.t. } \{x,y\} \in \EN \text{ and } |y-x| \ge  N^\al\}.
$$
Since we assume the event $\mcl B$ to be realized, the set $\msf{End}_N$ contains no more than $ N^{1-\al}/10$ points. Since we also assume \eqref{e.good.event}, we can isolate at least 
$$
K' := \frac K 2 - \frac 1 {10} N^{1-\al} = \Ll(\frac 1 {6 } - \frac 1 {10}\Rr) N^{1-\al} 
$$
pairwise disjoint intervals $I_{l_1}, \ldots, I_{l_{K'}}$ such that for every $k \in \{1,\ldots,K'\}$,
$$
I_{l_k} \cap \msf{End}_N = \emptyset \ \ \text{ and } \ \ |J_{l_k} \cap \msc C_{l_k}| \ge {c_1 } N^\al.
$$
Fix $k \in \{1,\ldots, K'\}$. We now show that 
\begin{equation}
\label{e.local.cut}
\mbox{there are at least ${c_1 } N^\al$ \emph{local} cutpoints in $I_{l_k}$.} 
\end{equation}
As recalled before the beginning of the proof, the potentially problematic edges are those with one endpoint in $I$ and one outside of $I$. Since $I_{l_k}$ contains no element of $\msf{End}_N$, no such edge can have length larger than $ N^\al$. Therefore, if a point is at distance at least $ N^\al$ from the extremities of $I_{l_k}$, then there is no edge going above it and that has exactly one endpoint outside of $I_{l_k}$. Since we chose $J_{l_k}$ as the middle third interval in $I_{l_k}$, and $I_{l_k}$ is of total length $3 N^\al$, this yields \eqref{e.local.cut}.

By Lemma~\ref{l.local.to.H}, we deduce that for every $k,k' \in \{1,\ldots,K'\}$, we have
\begin{equation*}  
\sum_{x \in I_{l_k},y \in I_{l_{k'}}} \d_\GN(x,y) \ge c N^{3\al}.
\end{equation*}
Summing over $k,k'$ and recalling that $K' \ge c N^{1-\al}$, we obtain that
$\mcl H_1(\GN) \ge c N^\al$, as desired.
\end{proof}

\subsection{Conclusion}
In this final subsection, we complete the proof of Theorem~\ref{t.main1}.
\begin{proof}[Proof of Theorem~\ref{t.main1}]
Fix $\ga < 1$, $p \in [1,\infty]$, $b \in (\gamma - 1, 1)$ and 
\begin{equation*}  
\al := \frac {1-b}{2-\ga} \in (0,1). 
\end{equation*}
Let $\eps > 0$ be sufficiently small, and let $\al' \in (0,1) \setminus (\al - 2\eps,\al + 2\eps)$. By the comparisons $\H_1 \le \H_p \le \H_\infty$ and Propositions~\ref{p.lower} and \ref{p.upper.small}, there exists a constant $C < \infty$ such that 
\begin{multline}  \label{e.ratio1}
\frac{\Pb[N^{\al-\eps} \le \H_p(\GN) \le N^{\al + \eps}]}{\Pb[N^{\al' - \eps} \le \H_p(\GN) \le N^{\al'+\eps}]} \\
\ge \frac{\exp \Ll(- C^{-1} \Ll[ N^{b+\al+\eps} + N^{1-(\al-\eps)(1-\ga)} \Rr] \Rr)}{\exp \Ll( -C \Ll[N^{b+\al'-\eps} + N^{1-(\al'+\eps)(1-\ga)} \Rr]\Rr)},
\end{multline}
The function
\begin{equation*}  
\td \al \mapsto (b+\td \al) \vee (1-\td \al(1-\ga))
\end{equation*}
attains a strict minimum at the value $\td \al = \al$. Reducing $\eps > 0$ as necessary, we can make sure that the right-hand side of \eqref{e.ratio1} tends to infinity as $N$ tends to infinity. The other cases are handled similarly. For example, when $\ga > 1$ and $b \in (0,\ga)$, we fix
\begin{equation*}  
\al := \frac{\ga - b}{\ga} \in (0,1),
\end{equation*}
take $\al' \in (0,1) \setminus (\al - 2\eps,\al + 2\eps)$, and observe that 
\begin{multline*}  
\frac{\Pb[N^{\al-\eps} \le \H_p(\GN) \le N^{\al + \eps}]}{\Pb[N^{\al' - \eps} \le \H_p(\GN) \le N^{\al'+\eps}]} \\
\ge \frac{\exp \Ll(- C^{-1} \Ll[ N^{b+\al+\eps} + N^{1+(1-\al - \eps)(\ga-1)} \Rr] \Rr)}{\exp \Ll( -C \Ll[N^{b+\al'-\eps} + N^{1+(1-\al' + \eps)(\ga-1)} \Rr]\Rr)}.
\end{multline*}
The exponent $\al$ was chosen to be realize the strict minimum of the function
\begin{equation*}  
\td \al \mapsto (b+ \td \al) \vee (1 + (1-\td \al)(\ga -1)),
\end{equation*}
so the conclusion follows as before.
\end{proof}

%
%
%
%
%
%
%
\section{Critical case}
\label{s.gamma1}

The goal of this section is to prove Theorem~\ref{t.main2}. The main step of the proof consists in showing the following upper and lower bounds on the probability of deviations of the average path length $\H_p(\GN)$ under the measure $\P_1$.

\begin{proposition}
\label{p.crit}
$\mathrm{(i.)}$ For any $p \in [1,\infty]$, $k \in \N$ and $N$ large enough, we have
\begin{align}\label{eq:want_dir1_final}
&\P_1\left[\mathcal{H}_p(\mathcal{G}_N) \leq 3kN^{\frac{1}{k}}\right] \geq \exp\left\{-(k-1)N\right\}.\end{align}
$\mathrm{(ii.)}$ Assume  $p \in [1,\infty]$, $k \in \mathbb{N}$, $\eta \in \left(\frac{1}{k+1},\frac{1}{k}\right)$ and 
\begin{equation}\label{eq:def_zeta_p}\zeta < \zeta_p(\eta):= \Ll| \begin{array}{ll}\frac{p}{k+2p}(1-k\eta)&\text{if } p \in [1,\infty),\\[.2cm]\frac{1}{2}(1-k\eta)&\text{if } p = \infty. \end{array}\Rr.\end{equation}
 Then, for $N$ large enough we have
\begin{equation}\label{eq:want_dir2_final}\P_1\left[\mathcal{H}_p(\mathcal{G}_N) \leq N^\eta\right] \leq \exp\left\{-k N + N^{1-\zeta}\right\}.\end{equation}
\end{proposition}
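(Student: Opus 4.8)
The plan is to treat the two parts separately, with part (i) being a direct generalization of the hierarchical constructions used for Proposition~\ref{p.lower}, and part (ii) being the genuinely new ingredient. For part (i), I would build $k$ superimposed ``covering layers'' of edges, the $j$-th layer (for $j = 1,\ldots,k$) consisting of edges of length roughly $N^{j/(k+1)}$ laid end to end along $V_N$, so that the full edge set is a union of $k$ near-partitions of $\{0,\ldots,N-1\}$ into blocks of these sizes. Concretely, one takes dyadic (or $N^{1/(k+1)}$-adic) endpoints so that each layer has $\lesssim N / N^{j/(k+1)}$ edges, each of cost $\exp(-N^{j/(k+1)})$ under $\P_1$ since $\gamma = 1$; summing the logarithms, the cost of layer $j$ is $\lesssim N^{1 - j/(k+1)} \cdot N^{j/(k+1)} = N$, and the total over the $k$ layers is $\lesssim kN$ (the constant $k-1$ in the statement is obtained by being slightly more careful, e.g.\ noting that the topmost layer is cheap, or that with $N^{1/(k+1)}$-adic endpoints one layer is essentially free). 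One then checks the deterministic implication: on the event that all these edges are present, any two vertices are connected by a path that first walks $\lesssim N^{1/(k+1)}$ unit steps, then uses one edge of each successive layer, so $\d_{\GN}(x,y) \lesssim N^{1/(k+1)} + k$, giving $\H_\infty(\GN) \le 3k N^{1/(k+1)}$ — wait, one must be careful that the claimed bound is $3kN^{1/k}$, which is weaker, so in fact even a sloppy version of this construction suffices. This part is routine given Proposition~\ref{p.lower}.

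The substance is part (ii), the upper bound on $\P_1[\H_p(\GN) \le N^\eta]$ for $\eta \in (\frac1{k+1},\frac1k)$. The idea, following the paper's stated strategy of ``inductively revealing the necessity of successive layers,'' is an induction on $k$. The base case $k=1$: if $\H_p(\GN) \le N^\eta$ with $\eta < 1$, then in particular $\H_1(\GN) \le N^\eta$, and by the cutpoint machinery of Section~\ref{ss.upper} (Lemmas~\ref{l.T-to-H}, \ref{l.coupling} and the exponential-moment estimate, now specialized to $\gamma = 1$ where $\td X_1$ has merely a stretched-exponential or even just a finite-exponential-moment tail with scale depending on $\sigma$), one forces the existence of $\approx N^{1-\eta}$ well-separated $\sigma$-cutpoints at scale $\sigma \simeq N^\eta$, each requiring an edge of length $\gtrsim N^\eta$ above it; the probability of having that many such edges is $\le \exp(-cN^{1-\eta}\cdot N^\eta) = \exp(-cN)$, and a finer accounting of the $\zeta$-correction (tracking how the number of cutpoints concentrates, using the large-deviation estimate of the Appendix with the right rate) upgrades $-cN$ to $-kN + N^{1-\zeta}$ with $k=1$. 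For the inductive step, the key observation is scale-invariance of the structure: if $\H_p(\GN) \le N^\eta$ with $\eta \in (\frac1{k+1},\frac1k)$, one argues that there must exist $\approx N^{1-\eta'}$ disjoint sub-intervals of length $\approx N^{\eta'}$ — for an appropriate intermediate scale $\eta' $ with $\frac1k < \eta' < 1$ (e.g.\ $\eta' = k\eta$, so $\eta/\eta' = 1/k$ and within each sub-interval the rescaled problem has the $(k-1)$-layer geometry) — such that (a) the ``top'' layer of long edges covering $V_N$ at scale $N^{\eta'}$ costs $\ge N - N^{1-\zeta'}$ by the $k=1$ argument applied at that scale, and (b) inside a positive fraction of these sub-intervals the induced graph still has average path length $\le$ (the sub-interval length)$^{1/k}$ up to constants, so by the induction hypothesis each contributes a further factor $\exp(-(k-1)N^{\eta'} + (N^{\eta'})^{1-\zeta''})$; multiplying over $\approx N^{1-\eta'}$ sub-intervals gives $\exp(-(k-1)N^{1-\eta'}\cdot N^{\eta'} + \ldots) = \exp(-(k-1)N + \ldots)$, and combining with (a) yields $\exp(-kN + N^{1-\zeta})$ after bookkeeping the error exponents, which is where the precise form of $\zeta_p(\eta)$ and the $p$-dependence (via how $\H_p$ versus $\H_1$ versus $\H_\infty$ propagate through the rescaling, costing a factor related to $p/(k+2p)$) must be tracked.

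The main obstacle is step (ii)'s inductive bookkeeping of the correction exponent $\zeta$. The leading order $\exp(-kN)$ is conceptually clear — it is just ``$k$ layers, each of cost $\simeq N$, are forced'' — but to get it one must show that the forced cutpoint/covering structure at each of the $k$ scales is, to leading order, cost $N$ and not merely $cN$ for some small $c$, and the errors incurred at the boundaries between sub-intervals, in passing from $\H_p$ to $\H_1$, in the concentration of the cutpoint count, and in the union over $\approx N^{1-\eta'}$ sub-intervals must all be shown to be $o(N)$ and in fact $\le N^{1-\zeta}$ for the claimed $\zeta$. Handling the $p = \infty$ case is marginally easier (the diameter bound is cleaner, giving the $\frac12(1-k\eta)$ exponent directly from the cutpoint count), while the $p < \infty$ case requires an $\ell^p$-averaged version of Lemma~\ref{l.T-to-H}/Lemma~\ref{l.local.to.H} and careful optimization over how many cutpoints to demand, producing the $\frac{p}{k+2p}$ factor. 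I would organize this as: first prove the $k=1$ case as a self-contained lemma (the ``one forced layer'' estimate with its $\zeta$-correction), then set up the rescaling lemma that reduces scale-$N$, $k$-layer deviations to scale-$N^{\eta'}$, $(k-1)$-layer deviations plus one forced top layer, and finally run the induction, deferring the $\ell^p$ combinatorial estimate to a separate lemma.
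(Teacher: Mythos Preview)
Part~(i) is fine in spirit; the paper's construction (Lemma~\ref{lem:enough_construc}) uses $k-1$ layers at scales $L^j$ with $L=\lfloor N^{1/k}\rfloor$, $j=1,\ldots,k-1$, each of total length $\le N$, giving cost $\le(k-1)N$. Your $N^{j/(k+1)}$ is a slip but harmless given the target bound.

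Part~(ii) has a genuine gap. The cutpoint machinery from Section~\ref{ss.upper} can only yield bounds of the form $\exp(-cN)$ with an unspecified $c>0$; your remark that ``a finer accounting upgrades $-cN$ to $-kN+N^{1-\zeta}$'' is precisely where the argument breaks, because passing from \emph{some} constant to the \emph{sharp} leading coefficient $k$ is not bookkeeping but the heart of the matter (and is exactly what is needed for the comparison in the proof of Theorem~\ref{t.main2}). Your description of the base case is also confused: $\sigma$-cutpoints are points \emph{without} a long edge above them, so their abundance forces $\H_1$ to be large, not small; even read charitably, counting non-cutpoints does not control the total cost since the covering edges may be shared. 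The inductive rescaling has a further problem: $\H_p(\GN)\le N^\eta$ does not imply that the graph \emph{restricted} to a sub-interval has small $\H_p$, since global geodesics may exit the sub-interval.

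The paper takes a different route: it separates the combinatorics from the probability. A purely deterministic lemma (Lemma~\ref{lem:cost_is_high}) shows that $\H_p(g)\le N^\eta$ forces $\sum_{|e|\ge N^\delta}|e|\ge kN-N^{1-\zeta_{p,\delta}(\eta)}(\log N)^{6k}$; the probability bound then follows from a single exponential Chebyshev inequality on this sum with tilt parameter $\theta=1-N^{-\delta'}$. The sharp $k$ comes from the identity $\C(g)=\sum_{e\in E_N^\circ}\psi(e,E_N)$, where $\psi(e,\Gamma)$ counts how many edges of $\Gamma$ cover the ground edge $e$, together with a three-level recursion: for each interval $I$, one locates a \emph{regular} vertex in its middle third (one admitting a path of length $\le N^\sigma$ to a point at Euclidean distance $\ge N/4$), and the long edges of that path cover most ground edges of a definite fraction of $I$ once; iterating inside the uncovered remainder (Level~2) and then inside the intervals delimited by the endpoints of the covering edges already found (Level~3) builds up $k$ layers of coverage. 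The $p$-dependence enters only through the count of irregular vertices, bounded via \eqref{eq:num_regular}; optimizing $\sigma$ and $\delta$ against this count produces the exponent $\zeta_p(\eta)$.
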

The proof of this proposition rests on the following two lemmas, which involve no probability. For each $g \in \msc G_N$, we denote
\begin{equation*}  
\mcl C(g) := \mcl C_1(g) = \sum_{e \in E_N \atop |e| > 1} |e|.
\end{equation*}
\begin{lemma}\label{lem:enough_construc}
For any $k \in \N$ and $N$ large enough, there exists $g \in \msc G_N$ such that $\C(g) \leq (k-1)N$ and $\H_\infty(g) \leq 3kN^\frac{1}{k}$.
\end{lemma}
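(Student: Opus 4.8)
The plan is to exhibit an explicit graph $g$ built from $k-1$ hierarchical layers of edges, following the intuition described after Theorem~\ref{t.main2}: for $\ell \in \{1,\ldots,k-1\}$, layer $\ell$ consists of edges of length roughly $N^{\ell/k}$ that tile $V_N$ without overlap, namely edges of the form $\{i \lfloor N^{\ell/k}\rfloor,(i+1)\lfloor N^{\ell/k}\rfloor\}$ for consecutive $i$. First I would set $m_\ell := \lfloor N^{\ell/k}\rfloor$ for $\ell = 0,1,\ldots,k$ (so $m_0 = 1$, $m_k \approx N$), and define $g = (V_N,E_N)$ by adjoining to $E^\circ_N$ all edges of the layers $\ell = 1,\ldots,k-1$, plus possibly a few corrective edges near the right endpoint $N-1$ so that every vertex can reach the right endpoint (the floor functions prevent exact nesting, exactly as in the proof of Proposition~\ref{p.lower}).

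For the cost bound, each layer $\ell$ contributes at most $\lceil N/m_\ell\rceil$ edges, each of length $\le 2m_\ell$, hence cost at most $\approx 2N$ per layer, and so $\C(g) \le 2(k-1)N + o(N) \le (k-1)N$ once we are slightly more careful — in fact each tiling layer has cost exactly $\sum_i m_\ell = (\text{number of edges})\cdot m_\ell \le N + m_\ell \le N + o(N)$, since the lengths sum to at most $N$, so the total over $k-1$ layers is $(k-1)N$ up to a lower-order term absorbed for $N$ large; the handful of corrective edges have total length $O(N^{(k-1)/k}) = o(N)$. For the diameter bound, I would show by the same telescoping argument as in \eqref{e.low-small-imp}: given $x \in V_N$, round $x$ down to the nearest multiple of $m_{k-1}$ using at most $m_{k-1}$ ground steps is too costly, so instead one descends the hierarchy — from $x$, take at most $m_0 \cdot$ (correction) to reach a multiple of $m_1$, then follow layer-$1$ edges to reach a multiple of $m_2$, and so on. At each level $\ell$ one uses at most $m_{\ell}/m_{\ell-1} \le N^{1/k}+1$ edges of layer $\ell-1$ to bridge from one multiple of $m_\ell$ to the next multiple of $m_{\ell+1}$; summing over the $k$ levels gives a path of length at most $k(N^{1/k}+2)$ from any $x$ to the endpoint $0$ (or $N-1$), whence $\H_\infty(g) \le 2k(N^{1/k}+2) \le 3kN^{1/k}$ for $N$ large.

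The main obstacle is purely bookkeeping: because $m_{\ell+1}$ is not an exact multiple of $m_\ell$, the layers are not perfectly nested, so I must be careful that (a) the corrective edges needed to patch the ``remainder'' intervals at each scale contribute only $o(N)$ to the cost, and (b) the path construction still works — this is handled exactly as in the proof of Proposition~\ref{p.lower}, where one checks that $|i_\ell m_\ell - i_{\ell+1} m_{\ell+1}|$ is either $0$ or a controlled multiple of $m_\ell$, so a bounded number of layer-$\ell$ edges (plus at most one corrective edge) suffices to advance one level. I expect no conceptual difficulty beyond ensuring the constants $k-1$ and $3k$ are genuinely met rather than $k-1+o(1)$ and $3k-o(1)$; choosing $N$ large and being slightly wasteful (e.g. noting $k-1$ layers give strictly less than $(k-1)N$ because the number of edges per layer is at most $\lfloor N/m_\ell\rfloor < N/m_\ell$) closes this gap.
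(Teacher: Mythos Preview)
Your high-level strategy is exactly the paper's, but your choice of scales creates a genuine gap that is \emph{not} just bookkeeping. The paper sets $L=\lfloor N^{1/k}\rfloor$ and uses the scales $L^j$ for $j=1,\ldots,k-1$, i.e.\ edges $\{iL^j,(i+1)L^j\}$. Because $L^{j+1}$ is an exact multiple of $L^j$, the vertex sets $S_j=\{iL^j:i\ge 0\}\cap V_N$ are nested, $S_{j+1}\subset S_j$, and one gets $\d_g(x,S_{j+1})\le L$ for every $x\in S_j$ directly from layer-$j$ edges alone. The diameter bound then follows in one line, $\H_\infty(g)\le 2(k-1)L+\mathrm{diam}_g(S_{k-1})\le 2(k-1)N^{1/k}+2N^{1/k}\le 3kN^{1/k}$, and the cost bound is immediate since each layer contributes $\lfloor(N-1)/L^j\rfloor\cdot L^j\le N-1$. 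No corrective edges are needed.

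With your scales $m_\ell=\lfloor N^{\ell/k}\rfloor$, the sets $T_\ell=\{im_\ell\}$ do \emph{not} nest, and your appeal to Proposition~\ref{p.lower} is misplaced: there the scales are $2^j$, which nest perfectly, and the key fact is $|i_\ell 2^\ell-i_{\ell+1}2^{\ell+1}|\in\{0,2^\ell\}$. Your assertion that ``$|i_\ell m_\ell-i_{\ell+1}m_{\ell+1}|$ is either $0$ or a controlled multiple of $m_\ell$'' is simply false, since $i_{\ell+1}m_{\ell+1}$ is generally not a multiple of $m_\ell$. Consequently, going from $T_\ell$ to $T_{\ell+1}$ requires a correction of Euclidean size up to $m_\ell$, which must itself be resolved recursively through lower layers; the natural recursion $D(m_\ell)\le 2D(m_{\ell-1})+m_\ell/m_{\ell-1}$ gives a diameter of order $2^kN^{1/k}$, exceeding $3kN^{1/k}$ once $k\ge 4$. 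Your proposed corrective edges do not obviously rescue this either, since the slack in the cost bound (each layer costs at most $N-1$, so the total slack is only $k-1$) cannot absorb any edge of nontrivial length. The one-word fix is to replace $\lfloor N^{\ell/k}\rfloor$ by $\lfloor N^{1/k}\rfloor^\ell$.
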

\begin{lemma}\label{lem:cost_is_high}
Let $p \in [1,\infty]$, $k \in \N$, $\eta \in \left(\frac{1}{k+1},\frac{1}{k}\right)$ and $\delta \in (0,1-k \eta)$. For every $N$ large enough and $g = (V_N,E_N) \in \msc G_N$, we have the implication
\begin{equation}\mathcal{H}_p(g) \leq N^\eta
\quad \implies \quad 
\sum_{\substack{e \in E_N:\\ |e| \geq N^\delta}} |e| \geq k N - N^{1-\zeta_{p,\delta}(\eta) }\cdot (\log N)^{6k},
\label{eq:new_want_final}\end{equation}
where
\begin{equation}\label{eq:def_zeta_delta}
\zeta_{p,\delta}(\eta) = \Ll|\begin{array}{ll} \frac{p}{k+p}(1-k \eta - \delta)&\text{if } p \in [1,\infty),\\[.2cm] 1-k \eta - \delta,&\text{if } p = \infty.\end{array}\Rr.\end{equation}
\end{lemma}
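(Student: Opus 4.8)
\textbf{Proof proposal for Lemma~\ref{lem:cost_is_high}.}

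The plan is to fix a graph $g = (V_N, E_N)$ with $\H_p(g) \le N^\eta$ and to extract from this hypothesis the necessity of a large total length of long edges, proceeding by an inductive ``peeling'' argument over $j = 1, \dots, k$ layers. At stage $j$ I would like to have identified a collection of roughly $N/\ell_j$ integer subintervals, each of length $\ell_j$ with $\ell_j$ decreasing geometrically (morally $\ell_j \simeq N^{(k-j)\eta}$), such that within each such subinterval the induced graph has a bounded-in-$N$ number of ``long-range'' edges, hence still looks essentially one-dimensional; the distance bound $\H_p(g) \le N^\eta$ then forces, at the next stage, that a definite fraction of these subintervals must be ``refined'' by edges whose length is of order $\ell_j$, and counting the total length of those edges contributes about $N$ to the sum $\sum |e|$, independently of $j$ (this is the scale-invariance that makes $\ga = 1$ critical). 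Summing the $k$ layers gives the leading term $kN$; the point at which the recursion must stop, namely when $\ell_j$ has shrunk to around $N^\eta$ or $N^\delta$, produces the error term, whose size is controlled by $\zeta_{p,\delta}(\eta)$ and accounts for the polylogarithmic slack $(\log N)^{6k}$ coming from the $k$ nested counting steps.

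The key quantitative input at each layer is a one-scale statement: if $I$ is an integer interval of length $\ell$ carrying at most $M$ edges of length $\ge \ell/2$ (say), and if the $\ell^p$-average of $\d_g$ over pairs in $I$ is much smaller than $\ell$, then $I$ must contain many edges of length comparable to some $\ell' < \ell$; quantitatively, the number (or total length) of such edges is bounded below in terms of how small the average distance is, with the exponent depending on $p$ through the analogue of the $\zeta_p$ formula. For $p = \infty$ this is essentially a cutpoint/diameter argument in the spirit of Lemma~\ref{l.T-to-H} and Lemma~\ref{l.local.to.H}: if there were too few long edges above an interval, one could locate $\simeq \ell^{1-k\eta-\delta}$ many $\sigma$-cutpoints (with $\sigma \simeq \ell'$) forcing the diameter above $N^\eta$. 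For finite $p$ one replaces the diameter lower bound by an averaged one: a fraction of the pairs $(x,y)$ in $I$ must be separated by at least a certain number of these cutpoints, and a Hölder/Jensen computation turns the count of separating cutpoints into the exponent $\frac{p}{k+p}$ appearing in \eqref{eq:def_zeta_delta}. I would isolate this one-scale lemma first, prove it once, and then feed it into the induction so that the bookkeeping of intervals and error terms is the only thing that changes from layer to layer.

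The main obstacle, and where I expect most of the work to go, is propagating the induction cleanly: at layer $j$ I need the subintervals not merely to be short, but to have few long edges \emph{above} them (not just few with both endpoints inside), so that the one-scale lemma can be reapplied; this is exactly the subtle distinction between ``cutpoint in the induced graph'' and ``local cutpoint'' flagged in the text before the proof of Proposition~\ref{p.upper.small}. To handle it I would, at each stage, first discard the (few) subintervals that happen to sit under an unusually long edge — there can only be about $kN/\ell_j$ edges of length $\ge \ell_j$ if the running bound on $\sum|e|$ is to be respected, so only a small fraction of subintervals are spoiled — and then apply the one-scale lemma on the survivors. Keeping track of the constants (the ``definite fraction'' at each of the $k$ steps) is what forces the $(\log N)^{6k}$: each layer costs a logarithmic factor or two in the interval count, and there are $k$ layers. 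A secondary technical point is that the hypothesis $\eta \in (\frac{1}{k+1}, \frac{1}{k})$ is what guarantees the recursion runs for exactly $k$ productive layers before the scales collapse — for $\eta$ just above $\frac{1}{k+1}$ the last layer is the delicate one and determines the error exponent, which is why $\delta$ must be taken in $(0, 1-k\eta)$ and why $\zeta_{p,\delta}(\eta) \to \zeta_p(\eta)$ only as $\delta \downarrow 0$; I would carry $\delta$ as a free parameter throughout and optimize at the very end, exactly as Proposition~\ref{p.crit}(ii.) does when it deduces \eqref{eq:want_dir2_final} from this lemma.
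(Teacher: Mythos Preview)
Your proposal takes a genuinely different route from the paper's. You frame the argument as a cutpoint-counting one in the spirit of Section~\ref{s.neq1}: at each of $k$ decreasing scales $\ell_j$, locate $\sigma$-cutpoints inside subintervals and use them to force a lower bound on distances, contraposing against $\H_p(g)\le N^\eta$. The paper instead argues \emph{constructively} via a covering argument. It introduces a parameter $\sigma\ge\eta$ and calls $x$ \emph{regular} if some $y$ with $|x-y|\ge N/4$ satisfies $\d_g(x,y)\le N^\sigma$; from $\H_p(g)\le N^\eta$ the number of irregular vertices is at most $2N^{1-p(\sigma-\eta)}$. For any interval $I$ with a regular vertex in its middle third, the short path from that vertex to the exterior of $I$ supplies edges of length $\ge N^\delta$ that cover (in the sense of the function $\psi$) all but $O(N^{\sigma+\delta})$ ground edges of one third of $I$ (Level~1). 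Iterating on the uncovered two-thirds yields a set $\Gamma_I$ covering most of $I$ once (Level~2). Level~3 then observes that the edges found so far partition $V_N$ into subintervals whose interiors contain none of their endpoints; applying Level~2 inside each such subinterval produces a \emph{new}, disjoint layer of long edges, and after $k$ rounds most ground edges satisfy $\psi(e,\Lambda_k)\ge k$. The conclusion follows from $\sum_{|e|\ge N^\delta}|e|\ge\sum_{e\in E^\circ_N}\psi(e,\Lambda_k)$, and the exponent $\frac{p}{k+p}$ arises not from a H\"older step but from balancing the two error sources---$N^{\sigma k+\delta}$ from accumulated uncovered edges and $N^{1-p(\sigma-\eta)}$ from irregular-vertex intervals---by choosing $\sigma=\frac{1+p\eta-\delta}{k+p}$.

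Your route has two soft spots as written. First, the step ``there can only be about $kN/\ell_j$ edges of length $\ge \ell_j$ if the running bound on $\sum|e|$ is to be respected'' is circular: you are proving a \emph{lower} bound on $\sum|e|$, so you cannot invoke an upper bound on it to discard spoiled subintervals. The paper sidesteps this because its recursion never needs to control how many long edges exist, only where their endpoints lie---the subintervals at Level~3 are delimited by those endpoints, not by a prescribed scale $\ell_j$. Second, it is not clear that a H\"older/Jensen manipulation of cutpoint counts yields exactly $\frac{p}{k+p}$; in the paper this exponent is the root of an explicit balance equation in the free parameter $\sigma$. Your contrapositive approach may be workable, but the paper's regular-vertex/covering framework handles the multi-layer structure and the $p$-dependence more directly.
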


In Subsection \ref{ss:proof_of_prop_and_thm}, we show how Lemmas \ref{lem:enough_construc} and \ref{lem:cost_is_high} imply Proposition \ref{p.crit}, and how this proposition in turn gives Theorem \ref{t.main2}. In Subsection \ref{ss:proof_deterministic}, we prove the two lemmas.

\subsection{Proofs of Proposition \ref{p.crit} and Theorem \ref{t.main2}}
\label{ss:proof_of_prop_and_thm}
\begin{proof}[Proof of Proposition~\ref{p.crit}]
For the first statement, let $\bar E_N$ be the set of edges in a graph as described in Lemma \ref{lem:enough_construc}. The desired result follows from
\begin{align*}\P_1\left[\mathcal{H}_p(\mathcal{G}_N) \leq 3kN^{\frac{1}{k}}\right]&\geq  \P_1\left[\mathcal{H}_\infty(\mathcal{G}_N)\leq 3kN^{\frac{1}{k}}\right] \\&\geq \prod_{e \in \bar E_N} \exp\{-|e|\} \geq \exp\left\{-(k-1)N\right\}.
\end{align*}

We now turn to the second statement. Fix $p \in [1,\infty]$, $k \in \N$ and  $\eta \in \left(\frac{1}{k+1},\frac{1}{k}\right)$. Also let $\delta \in (0, 1-k \eta)$ to be chosen later. For any $\theta > 0$ we have
\begin{align}\nonumber
\mathbb{E}_1\left[\exp\left\{ \theta \cdot \sum_{\substack{e\in E_N:|e|\geq N^\delta}} |e|\right\} \right]&=\prod_{\substack{e \in \msc{E}_N:\\|e|\geq N^\delta}} \mathbb{E}_1\left[\exp\left\{\theta \cdot |e| \cdot  \mathds{1}_{\{e\in E_N\}}\right\}\right]\\
\nonumber&\leq \prod_{\substack{e \in \msc{E}_N:\\|e|\geq N^\delta}} \left(1+\exp\{(\theta -1) |e|\}\right)\\
\nonumber&\leq \prod_{i = \lfloor N^\delta \rfloor}^N \;\prod_{\substack{e \in \msc{E}_N:\\|e|=i}} \exp\left\{\exp \left\{(\theta -1) i \right\} \right\}\\
&\leq \exp \left\{N \sum_{i=\lfloor N^\delta \rfloor}^\infty \exp\left\{(\theta -1) i  \right\} \right\}. \label{eq:sum_i_N_floor}
\end{align}
If $\delta'< \delta$ and $\theta = 1 - N^{-\delta'}$, \eqref{eq:sum_i_N_floor} implies that, for $N$ large enough,
\begin{equation}\label{eq:less_than_2}
\mathbb{E}_1\left[\exp\left\{ \theta \cdot \sum_{\substack{e\in E_N:|e|\geq N^\delta}} |e|\right\} \right] \leq 2.
\end{equation}
Then, using Lemma \ref{lem:cost_is_high} and Chebyshev's inequality, if $N$ is large enough,
\begin{align}\nonumber\mathbb{P}_1\left[\mathcal{H}_p(\GN) \leq N^\eta\right] 
& \leq \P_1\left[\sum_{\substack{e \in E_N:\\|e| \ge N^\delta}}|e| \geq k N - (\log N)^{6k}N^{1-\zeta_{p,\delta}(\eta)} \right]
\\\nonumber&\stackrel{\eqref{eq:less_than_2}}{\leq} 2\exp\left\{-(1-N^{-\delta'})\left(k N -(\log N)^{6k}N^{1-\zeta_{p,\delta}(\eta)}\right) \right\} \\&\hspace{.15cm}\leq 2 \exp\left\{-k N + (\log N)^{6k}N^{1-\zeta_{p,\delta}(\eta)} + k N^{1-\delta'}\right\}.\label{eq:right_hand_optimal}\end{align}
We are still free to choose $\delta$ and $\delta'< \delta$. Having in mind the two exponents of $N$ that appear in \eqref{eq:right_hand_optimal}, we choose $\delta$ solving
$$1-\delta = 1-\zeta_{p,\delta}(\eta);$$
this is achieved for $\delta = \zeta_p(\eta)$, as defined in \eqref{eq:def_zeta_p}. Next, we take $\zeta < \zeta_p(\eta)$, as in the statement of the proposition. Observing that $\delta = \zeta_{p,\delta}(\eta) = \zeta_p(\eta)$, we can choose $\delta'$ so that
$$1-\zeta_{p,\delta}(\eta) = 1-\delta < 1-\delta' < 1-\zeta.$$ Then, for $N$ large enough the expression in \eqref{eq:right_hand_optimal} is smaller than $\exp\{-k N + N^{1-\zeta}\}$ as required. 
\end{proof}

\begin{proof}[Proof of Theorem~\ref{t.main2}]
Define
$$A_{k,\varepsilon, N } = \left[N^{\frac{1}{k+1} - \varepsilon}, \;N^{\frac{1}{k+1} + \varepsilon}\right],\qquad k,N\in\N,\;\varepsilon > 0.$$
The desired statement will follow from proving that, for any $k \in \N$, if $\varepsilon > 0$ is small enough and
\begin{equation}
\label{eq:where_is_b} b \in \left(\frac{k-1}{k} + h(k,p)+2\varepsilon,\; \frac{k}{k+1} - 2\varepsilon\right),
\end{equation}
then
\begin{equation}\label{eq:after_comparing}
\P_1^{b,p}\left[\mathcal{H}_p(\mathcal{G}_N) \in A_{k,\varepsilon, N}\right] \xrightarrow{N \to \infty} 1.
\end{equation}
To this end, recalling the definition of $Z^{b,p}_{\ga,N}$ in \eqref{e.def.Zbp}, we start bounding:
\begin{align}\nonumber
&Z^{b,p}_{1,N} \, \P_1^{b,p}\left[\mathcal{H}_p(\mathcal{G}_N) \in A_{k,\varepsilon, N}\right] = \E_1\left[\exp\left\{-N^b\cdot  \mathcal{H}_p(\mathcal{G}_N) \right\} \cdot \mathds{1}\left\{\mathcal{H}_p(\mathcal{G}_N) \in A_{k,\eps,N} \right\}\right] \\[.2cm]
\nonumber& \geq \E_1\left[\exp\left\{-N^b\cdot  \mathcal{H}_p(\mathcal{G}_N) \right\} \cdot \mathds{1}\left\{N ^{\frac{1}{k+1}-\eps} \leq \mathcal{H}_p(\mathcal{G}_N) \leq 3(k+1)N^{\frac{1}{k+1}} \right\}\right] \\[.2cm]
\nonumber&\geq \exp\left\{-3(k+1)N^{b+\frac{1}{k+1}}\right\}  \cdot \left(\P_1\left[\mathcal{H}_p(\mathcal{G}_N) \leq 3(k+1)N^{\frac{1}{k+1}}\right] - \P_1\left[\mathcal{H}_p(\mathcal{G}_N) \leq N^{\frac{1}{k+1}-\varepsilon}\right]\right)\\[.2cm]
\nonumber&\stackrel{\eqref{eq:want_dir1_final}, \eqref{eq:want_dir2_final}}{\geq} \exp\left\{-3(k+1)N^{b+\frac{1}{k+1}}\right\} \cdot \left(\exp\{-kN\} - \exp\{-(k+1) N + o_\varepsilon(N) \}\right),
\end{align}
where $o_\varepsilon(N)$ is a function that depends on $k$, $\varepsilon$ and $N$ and satisfies $o_\varepsilon(N)/N \to 0$ as $N \to \infty$. We thus obtain
\begin{equation}\label{eq:lower_bound_to_compare}
Z^{b,p}_{1,N} \, \P_1^{b,p}\left[\mathcal{H}_p(\mathcal{G}_N) \in A_{k,\varepsilon, N}\right] \geq \frac12\exp\left\{-kN-3(k+1)N^{b+\frac{1}{k+1} }  \right\}.
\end{equation}
We note that, by \eqref{eq:where_is_b}, we have $b + \frac{1}{k+1} < 1$, so
\begin{equation}
\label{eq:much_much_less} N^{b + \frac{1}{k+1} } \ll N\quad \text{as } N \to \infty,
\end{equation}
hence the term $-3(k+1)N^{b + \frac{1}{k+1} }$ is negligible (in absolute value) compared to $-kN$ in the exponential on the right-hand side of \eqref{eq:lower_bound_to_compare}.

Now that we have this lower bound, let us explain how the rest of the proof will go. Define
$$
A^{(0)}_{k,\varepsilon,N} = \left[0,\;N^{\frac{1}{k+1}-\varepsilon}\right],\quad A^{(1)}_{k,\varepsilon,N} = \left[N^{\frac{1}{k+1}+\varepsilon},\;N^{\frac{1}{k}-\varepsilon}\right], \quad A^{(2)}_{k,\varepsilon,N} = \left[N^{\frac{1}{k}-\varepsilon},\;N\right],
$$
so that $[0, N] = A_{k,\varepsilon,N} \cup A^{(0)}_{k,\varepsilon,N} \cup A^{(1)}_{k,\varepsilon,N} \cup A^{(2)}_{k,\varepsilon,N}$. We will obtain upper bounds for
$$Z^{b,p}_{1,N} \, \P_1^{b,p}\left[ \mathcal{H}_p(\mathcal{G}_N) \in A^{(i)}_{k,\eps,N}\right],\quad i\in\{0,1,2\}$$
that will all be negligible compared to the right-hand side of \eqref{eq:lower_bound_to_compare} as $N \to \infty$. From this, \eqref{eq:after_comparing} will immediately follow.

\begin{itemize}\item[(a)]\textbf{Upper bound for $\P_1^{b,p}\left[ \mathcal{H}_p(\mathcal{G}_N) \in A^{(0)}_{k,\eps,N}\right]$}\\
This bound is quite simple:
\begin{align*}\nonumber
&Z^{b,p}_{1,N} \, \P_1^{b,p}\left[\mathcal{H}_p(\mathcal{G}_N) \in A^{(0)}_{k,\varepsilon, N}\right] \\[.2cm]&\qquad= \E_1\left[\exp\left\{-N^b\cdot  \mathcal{H}_p(\mathcal{G}_N) \right\} \cdot \mathds{1}\left\{\mathcal{H}_p(\mathcal{G}_N) \in A^{(0)}_{k,\eps,N} \right\}\right] \\[.2cm]\nonumber&\qquad\leq \P_1\left[\mathcal{H}_p(\mathcal{G}_N) \in A^{(0)}_{k,\eps,N}  \right]\\[.2cm]
&\qquad\stackrel{\eqref{eq:want_dir2_final}}{\leq} \exp\left\{-(k+1)N - o_\eps(N) \right\}.
\end{align*}
Using \eqref{eq:much_much_less}, it is then readily seen that the right-hand side above is negligible compared to the right-hand side of \eqref{eq:lower_bound_to_compare}.
\item[(b)] \textbf{Upper bound for $\P_1^{b,p}\left[ \mathcal{H}_p(\mathcal{G}_N) \in A^{(2)}_{k,\eps,N}\right]$}\\
Similarly to the previous bound,
\begin{align*}\nonumber
Z^{b,p}_{1,N} \, \P_1^{b,p}\left[\mathcal{H}_p(\mathcal{G}_N) \in A^{(2)}_{k,\varepsilon, N}\right] &\leq \exp\left\{-N^{b+ \frac{1}{k}-\eps} \right\}\cdot \P_1\left[ \mathcal{H}_p(\mathcal{G}_N) \in A^{(2)}_{k,\eps,N}\right] \\[.2cm]
&\leq \exp\left\{-N^{b+ \frac{1}{k}-\eps} \right\}.
\end{align*}
In order to show that this is negligible compared to the right-hand side of \eqref{eq:lower_bound_to_compare}, we note that, due to \eqref{eq:where_is_b}, we have
$$N^{b+ \frac{1}{k}-\eps} \gg kN +3(k+1)N^{b + \frac{1}{k+1} }\qquad \text{ as } N \to \infty. $$

\item[(c)] \textbf{Upper bound for $\P_1^{b,p}\left[ \mathcal{H}_p(\mathcal{G}_N) \in A^{(1)}_{k,\eps,N}\right]$}\\
This bound is harder than the previous two, as in this case it is not enough to dismiss the term $N^{1-\zeta}$ in \eqref{eq:want_dir2_final} as being $o(N)$. Rather, in the comparison with \eqref{eq:lower_bound_to_compare}, this term is now decisive. This complication is what leads to the introduction of the function $h(k,p)$ in \eqref{eq:def_of_h} (and the corresponding dark parts of Figure \ref{fig:critical}).

We define $f, g: [\frac{1}{k+1},\frac{1}{k}] \to \mathbb{R}$ by
\begin{align*}
&f(\eta) = b + \eta,\\[.2cm] &g(\eta) = \Ll|\begin{array}{ll}\frac{k + p + kp\eta}{k+2p}&\text{if } p \in [1,\infty),\\[.2cm] \frac12 + \frac{k}{2}\eta&\text{if } p = \infty.  \end{array} \Rr.
\end{align*}
The definition of $g$ is motivated by the fact that
\begin{equation}
\label{eq:why_g}
1-g(\eta) = \zeta_p(\eta) \quad \text{for all } \eta \in \left(\frac{1}{k+1},\frac{1}{k}\right),
\end{equation}
where $\zeta_p(\eta)$ was defined in \eqref{eq:def_zeta_p}. We also note that the function $h(k,p)$ defined in \eqref{eq:def_of_h} satisfies
\begin{equation}
\label{eq:why_h}h(k,p) = \left(g\left(\frac{1}{k+1}\right) - \frac{k-1}{k} - \frac{1}{k+1} \right)\vee 0
\end{equation}

We now claim that $f(\eta) > g(\eta)$ for all $\eta \in \left[\frac{1}{k+1},\frac{1}{k}\right]$. Indeed, since both $f$ and $g$ are affine functions of $\eta$, this follows from
\begin{align*}
&f\left(\frac{1}{k}\right) \stackrel{\eqref{eq:where_is_b}}{>} \frac{k-1}{k} + h(k,p) + \frac{1}{k} \geq 1 = g\left(\frac{1}{k}\right),\\[.2cm]
&f\left(\frac{1}{k+1}\right) \stackrel{\eqref{eq:where_is_b}}{>} \frac{k-1}{k} + h(k,p) + \frac{1}{k+1} \stackrel{\eqref{eq:why_h}}{\geq} g\left(\frac{1}{k+1}\right).
\end{align*}
As a consequence, we can find $\varepsilon' > 0$ and a partition of the interval $\left[\frac{1}{k+1}+\varepsilon,\frac{1}{k}-\varepsilon\right]$ with numbers $\eta_0 = \frac{1}{k+1} + \varepsilon < \eta_1 < \cdots < \eta_r = \frac{1}{k} - \varepsilon$ such that
\begin{equation}
f(\eta_i) > g(\eta_{i+1}) + \varepsilon' \quad \text{ for all } i.
\label{eq:compare_f_g}\end{equation}

We now have
\begin{align*}
&Z^{b,p}_{1,N} \, \P^{b,p}_1\left[\mathcal{H}_p(\mathcal{G}_N) \in A^{(1)}_{k,\eps,N}\right] \\&\qquad\leq \sum_i Z^{b,p}_{1,N} \, \P^{b,p}_1\left[N^{\eta_i} \leq  \mathcal{H}_p(\mathcal{G}_N) \leq N^{\eta_{i+1}} \right]\\[.2cm]
&\qquad\leq \sum_i \exp\{-N^{b+\eta_i}\} \cdot \P_1\left[ \mathcal{H}_p(\mathcal{G}_N) \leq N^{\eta_{i+1}} \right]\\[.2cm]&\qquad\stackrel{\eqref{eq:want_dir2_final},\eqref{eq:why_g}}{\leq} \sum_i \exp\{-N^{b+\eta_i}-kN + N^{g(\eta_{i+1})+\varepsilon'}\}.
\end{align*}
In order to show that each of the terms of the above sum is negligible compared to the right-hand side of \eqref{eq:lower_bound_to_compare}, we need to check that, for all $i$, 
$$ N^{b+\eta_i}  \gg N^{g(\eta_{i+1})+\varepsilon'} +3(k+1)N^{b+\frac{1}{k+1}} \qquad \text{ as } N \to \infty. $$
But this follows promptly from \eqref{eq:compare_f_g} and the fact that $\eta_i > \frac{1}{k+1}$ for each $i$, so we are done.

\end{itemize}
\end{proof}

\subsection{Proof of deterministic lemmas}\label{ss:proof_deterministic}
\begin{proof}[Proof of Lemma~\ref{lem:enough_construc}] Let $L = \lfloor N^{\frac{1}{k}}\rfloor$ and $z_{i,j} = iL^j$, for $i,j$ with $j \in \{1,\ldots, k-1\}$ and $i\in \{0,\ldots, \lfloor (N-1)/L^j \rfloor\}.$ Then define $E_N$ as the set of edges in $E^\circ_N$ together with all edges of the form $\{z_{i,j}, z_{i+1,j}\}$, and let $g = (V_N, E_N)$. We clearly have $\C(g)\leq (k-1)N$. Moreover, writing $S_0 = V_N$ and $S_j = \cup_i\{z_{i,j}\}$ for $j \in \{1,\ldots, k-1\}$, we have
\begin{align*}&\d_g(x,S_{j+1}) \leq L \text{ for all } x\in S_j \text{ and } j\in \{0,\ldots, k-2\};\\
&\d_g(x,y) \leq \frac{N}{L^{k-1}} = \frac{N}{\lfloor N^{1/k}\rfloor^{k-1}} \leq 2N^\frac{1}{k} \text{ for all } x,y \in S_{k-1}
\end{align*}
if $N$ is large enough; from this, $\H_\infty(g) \leq 3kN^\frac{1}{k}$ readily follows.
\end{proof}

We now turn to the proof of Lemma~\ref{lem:cost_is_high}, and first introduce some general terminology. If $I = \{a,\ldots, b\}$ is an integer interval, we define its \textit{interior} as $\text{int}(I) := \{x \in V_N: a<x<b\}$. We let $E^\circ(I)$ be the set of edges of $E^\circ_N$ with both extremities belonging to $I$. For $0 \le u  < v \le 1$, we define $$\llbracket u, v\rrbracket := \{x \in V_N: uN \leq x \leq vN \};$$if $I$ is an integer interval, we define $$\llbracket u,v\rrbracket_I := \{x \in I: \min I + u|I| \leq x \leq \min I + v|I| \}.$$

From now on, we assume that
\begin{equation}\label{eq:set_of_assumptions}
\begin{array}{c} p\in[1,\infty],\;\;k \in \N,\;\;\eta \in \left(\frac{1}{k+1},\frac{1}{k}\right),\;\;\delta \in (0,1),\\[.2cm]g = (V_N,E_N) \in \msc G_N,\;\; \mathcal{H}_p(g) \leq N^\eta.\end{array}\end{equation}

Due to the assumption $\mathcal{H}_p(g) \leq N^\eta$, if we take $\sigma > \eta$, then we expect most pairs $x,y \in V_N$ to satisfy $\d_g(x,y) \leq N^\sigma$ (in case $p = \infty$, this in fact holds for $\sigma = \eta$ and \textit{all} pairs $x,y$). With this in mind, we fix $\sigma \geq \eta$ and introduce some additional terminology. We say that a vertex $x \in V_N$ is \textit{regular} if there exists $y \in V_N$ such that $|y-x| \geq N/4$ and $\d_g(x,y) \leq N^\sigma$. Vertex $x$ is \textit{irregular} if this does not hold, that is, if $\d_g(x,y) > N^\sigma$ for all $y$ with $|y-x| \geq N/4$. Note that for $p = \infty$ all vertices are regular. For $p < \infty$, we have
$$N^{p \eta} \geq \mathcal{H}(g)^p \geq \frac{1}{N^2}\sum_{\substack{x: x \text{ is }\\\text{irregular}}} \;\;\sum_{\substack{y: |y-x| \geq N/4}} \d_g^p(x,y) \geq \frac{1}{N^2} \cdot \frac{N}{2} \cdot N^{\sigma p} \cdot |\{x: x \text{ is irregular}\}|,$$
so that
\begin{equation}\label{eq:num_regular}
|\{x \in V_N: x \text{ is irregular}\}| \leq  2N^{1-p(\sigma-\eta)}.
\end{equation}

In the remainder of this section, the exponents $\eta$, $\delta$ and $\sigma$ will be held fixed, but $N$ will often be assumed to be large enough, possibly depending on $\eta$, $\delta$ and $\sigma$. 


Given $\Gamma \subset \msc{E}_N $ and $e = \{a,b\} \in E^\circ_N$, we define 
\begin{equation*}
\psi(e,\Gamma) :=  \mbox{ number of edges $e' = \{a',b'\}\in\Gamma\backslash E^\circ_N$ with $a' \leq a$ and $b' \geq b$}. 
\end{equation*}
In case $\psi(e, \Gamma) = n$, we say that the ground edge $e$ is covered $n$ times by $\Gamma$. Since $\gamma = 1$, for any $g = (V_N, E_N) \in \msc G_N$ we have \begin{equation}
\C(g) =\sum_{e \in E_N\backslash E^\circ_N} |e| = \sum_{e \in E^\circ_N} \psi(e,E_N). \label{eq:psi_covers}
\end{equation}

The proof of Lemma \ref{lem:cost_is_high} is split into three parts, called ``levels'', in which we progressively argue that ground edges are covered by long edges of $E_N$ (a ``long edge'' here is an edge $\{x,y\}$ with $|x-y| \geq N^\delta$). Level 1 (carried out in Lemma \ref{lem:new_aux2}) is a simple initializing estimate. Level 2 (in Lemma \ref{lem:new_aux_cost_high}) is obtained from recursively using Level 1, and identifies one layer in the pile of layers alluded to in the introduction. Level 3, which contains the statement of Lemma \ref{lem:cost_is_high}, is obtained from recursively using Level 2 to identify the correct number of layers present in the graph.

It will be helpful to describe heuristically the ideas of proof for the first two levels. Both Level 1 and 2 take as input an integer interval $I \subset V_N$ and state two alternatives, at least one of which must hold true for $I$. One of the alternatives is of the form ``$I$ has many irregular vertices'' and the other states that the ground edges of $I$ are covered by long edges of $E_N$ in a way which we deem satisfactory for that level. We will simultaneously treat the cases $p \in [1,\infty)$ and $p = \infty$, and the reader will note that the latter case is simpler, as irregular vertices are then absent and only one of the aforementioned alternatives is possible (namely, ground edges being satisfactorily covered).

For Level 1, the first alternative is that the middle third of $I$ only has irregular vertices. If this is not the case, then we can find a path of length less than $N^\sigma$ from the middle third of $I$ to the exterior of $I$. We then decompose $I = I'\cup I''$ in two subintervals, according to whether the path leaves $I$ from the left or the right (see Figure \ref{fig:decompi}). The idea is that we can guarantee that most ground edges of $I'$ are covered by long edges of the path, while we do not guarantee anything concerning $I''$.

\begin{figure}[htb]
\begin{center}
\setlength\fboxsep{0pt}
\setlength\fboxrule{0.5pt}
\fbox{\includegraphics[width = 1.0\textwidth]{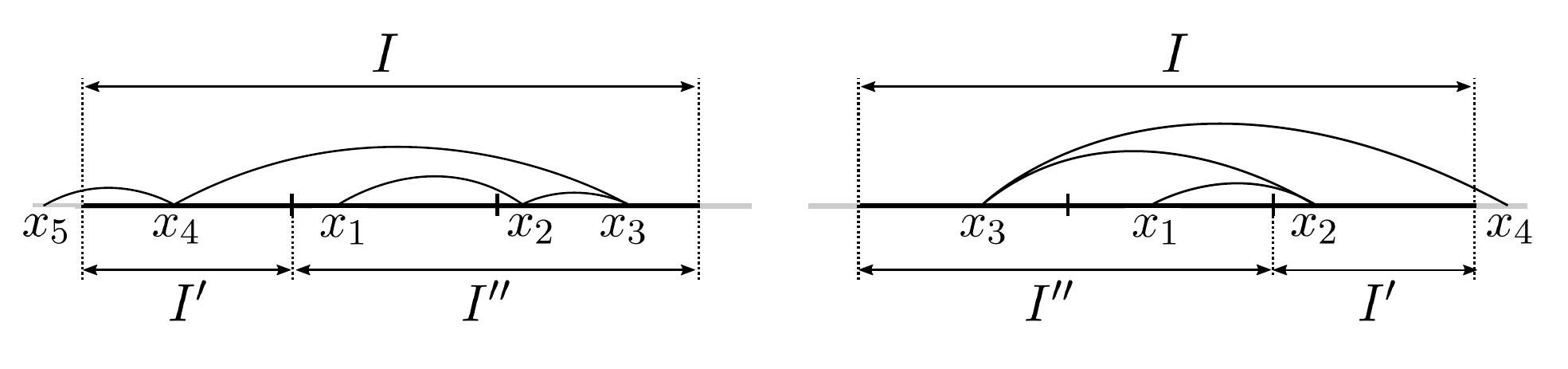}}
\end{center}
\caption{In case the depicted path leaves $I$ from the left, we let $I' = \llbracket 0,\frac13\rrbracket_I$ and $I'' = \llbracket \frac13, 1\rrbracket_I$. In case it leaves $I$ from the right, we let $I' = \llbracket \frac23, 1\rrbracket_I$ and $I''  =  \llbracket 0,\frac23\rrbracket_I$.}
\label{fig:decompi}
\end{figure}

\begin{lemma}\label{lem:new_aux2} \textbf{(Level 1 of recursion).}
If \eqref{eq:set_of_assumptions} holds and $N$ is large enough, then the following holds. For every  interval $I \subset V_N$ with $ |I| \leq N/4$, either
\begin{equation} \label{eq:bad_case_irregular}
|\{x\in I: x \text{ is irregular}\}| \geq |I|/4
\end{equation}
or there exist $\Phi_I \subset E_N$ and a decomposition $I = I' \cup I''$ of $I$ into intervals with disjoint interiors such that
\begin{align}
\label{eq:number_in_Phi}&|\Phi_I| \leq N^\sigma;\\
\label{eq:incidence_in_Phi}&\text{every edge of $\Phi_I$ is incident to at least one vertex of int($I$)};\\
\label{eq:length_in_Phi}&|e| \geq N^\delta \text{ for all } e \in \Phi_I;\\
\label{eq:size_of_I_prime}&|I''| \leq 3\cdot |I|/4;\\
\label{eq:coverage_in_Phi}&|\{e \in E^\circ(I'): \psi(e, \Phi_I) = 0\}| \leq 2N^{\sigma+\delta}.
\end{align}
\end{lemma}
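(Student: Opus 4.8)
The plan is to exploit the assumption $\mathcal H_p(g) \le N^\eta$ together with the definition of regular vertices. Suppose \eqref{eq:bad_case_irregular} fails, i.e.\ the interval $I$ has fewer than $|I|/4$ irregular vertices; in particular the middle third $\llbracket \frac13,\frac23\rrbracket_I$ contains at least one regular vertex $x_0$ (if $p=\infty$, every vertex is regular, so this is immediate). By definition of regularity there is a vertex $y_0 \in V_N$ with $|y_0 - x_0| \ge N/4$ and $\d_g(x_0,y_0) \le N^\sigma$. Since $|I| \le N/4$ and $x_0$ lies in the middle third of $I$, the point $y_0$ necessarily lies outside $I$, so a geodesic from $x_0$ to $y_0$ must exit $I$; let $P = (x_0 = v_0, v_1, \dots, v_m = y_0)$ be such a geodesic, with $m \le N^\sigma$. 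Depending on whether $P$ first exits $I$ through the left endpoint or the right endpoint of $I$, set $I' = \llbracket 0,\frac13\rrbracket_I$, $I'' = \llbracket \frac13,1\rrbracket_I$ in the former case, and $I' = \llbracket \frac23,1\rrbracket_I$, $I'' = \llbracket 0,\frac23\rrbracket_I$ in the latter; this gives the decomposition into intervals with disjoint interiors, and $|I''| \le 3|I|/4$, so \eqref{eq:size_of_I_prime} holds.

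Next I would build $\Phi_I$ from the edges of $P$. Let $e_j = \{v_{j-1}, v_j\}$, $j = 1,\dots,m$, be the consecutive edges of $P$, and let $\Phi_I$ be the set of those $e_j$ which have length $\ge N^\delta$ \emph{and} are incident to a vertex of $\mathrm{int}(I)$. Then \eqref{eq:number_in_Phi} is immediate since $|\Phi_I| \le m \le N^\sigma$, and \eqref{eq:incidence_in_Phi}, \eqref{eq:length_in_Phi} hold by construction. The content is \eqref{eq:coverage_in_Phi}: I must show that all but at most $2N^{\sigma+\delta}$ of the ground edges $e \in E^\circ(I')$ are covered by some edge of $\Phi_I$. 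Here is the geometric point. Since $x_0$ lies in the middle third of $I$ and $P$ exits $I$ on the side of $I'$ (say, for definiteness, $I' = \llbracket 0,\frac13\rrbracket_I$ and $P$ exits through $\min I$), the path $P$ starts at $x_0 \notin I'$, eventually reaches a vertex $< \min I$, i.e.\ to the left of $I'$, and $x_0$ is at Euclidean distance $\ge |I|/3 \ge |I'|$ to the right of $I'$. So the \emph{range} of vertices visited by $P$ covers, in the Euclidean sense, an interval containing all of $I'$: for every ground edge $e = \{a, a+1\} \in E^\circ(I')$, the path $P$ contains a step that "crosses" the location $a$, meaning an edge $e_j = \{v_{j-1}, v_j\}$ with $v_{j-1} \le a < v_j$ or $v_j \le a < v_{j-1}$. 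Pick for each such $a$ one crossing edge $e_{j(a)}$. This edge is incident to a vertex of $\mathrm{int}(I)$ provided $a$ is at Euclidean distance $> N^\delta$ from both endpoints of $I$ — but actually we should just throw away the edges $e$ whose crossing witness has length $< N^\delta$, and separately the ones near the boundary of $I$. A crossing edge of length $< N^\delta$ can serve as a witness for at most $N^\delta$ distinct ground edges $e$, and $P$ has at most $N^\sigma$ edges, so the total number of ground edges of $I'$ whose \emph{every} crossing witness has length $<N^\delta$ is at most $N^{\sigma+\delta}$; a further at most $N^\delta$ ground edges (those within distance $N^\delta$ of an endpoint of $I$, where a long crossing edge need not be incident to $\mathrm{int}(I)$) must be discarded. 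Taking $N$ large this is at most $2N^{\sigma+\delta}$, which is exactly \eqref{eq:coverage_in_Phi}. A crossing edge $e_{j(a)}$ of length $\ge N^\delta$ that is incident to $\mathrm{int}(I)$ lies in $\Phi_I$ and, by definition of $\psi$, covers the ground edge $\{a,a+1\}$ (its endpoints straddle $a$), which is what we need.

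The main obstacle I anticipate is making the "crossing witness" bookkeeping fully rigorous: one must be careful that the geodesic $P$ may wander in and out of $I$ many times before finally exiting on the $I'$ side, so one should really argue about the \emph{first} exit of $I$ (which, by choice of $I'$ versus $I''$, is on the correct side) and use that, before this first exit, $P$ stays in $I$ and starts from the middle third, hence must sweep across all of $I'$; combined with the fact that each short edge and each near-boundary edge accounts for only $O(N^\delta)$ ground edges and there are only $O(N^\sigma)$ path edges, one gets the claimed bound $2N^{\sigma+\delta}$. The cases $p < \infty$ and $p = \infty$ are handled uniformly; for $p = \infty$ the hypothesis $\mathcal H_\infty(g) \le N^\eta$ and $\sigma \ge \eta$ directly give $\d_g(x_0,y_0) \le N^\eta \le N^\sigma$ for every pair, so no regularity discussion is needed and \eqref{eq:bad_case_irregular} is vacuous.
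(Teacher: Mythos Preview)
Your proposal is correct and follows essentially the same approach as the paper: find a regular vertex in the middle third, use a short path from it out of $I$ to define $\Phi_I$ and the split $I'\cup I''$, and count uncovered ground edges in $I'$ via short versus long edges of the path. Two small refinements in the paper's version are worth noting: first, the paper disposes of the case $|I|\le N^{\sigma+\delta}$ at the outset by taking $I'=I$ and $\Phi_I=I''=\varnothing$, which also guarantees that for the remaining $I$ the middle third really has at least $|I|/4$ vertices; second, the paper truncates the path at its first exit from $\mathrm{int}(I)$ rather than following the full geodesic to $y_0$, so every edge of $\pi$ is automatically incident to $\mathrm{int}(I)$ and your ``near-boundary discard'' becomes unnecessary---the bound \eqref{eq:coverage_in_Phi} then splits cleanly as at most $N^\sigma$ ground edges traversed by $\pi$ plus at most $N^{\sigma+\delta}$ covered only by a short edge of $\pi$.
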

\begin{proof}
We first note that, if $I$ is small (say, $|I| \leq N^{\sigma+\delta}$), then we can set $I' = I$ and $\Phi_I = I''= \varnothing$; then,   \eqref{eq:number_in_Phi}, \eqref{eq:incidence_in_Phi}, \eqref{eq:length_in_Phi},  \eqref{eq:size_of_I_prime} and \eqref{eq:coverage_in_Phi} are trivially satisfied. So let us assume that $|I| > N^{\sigma+\delta}$. We also assume that $N$ is large enough that $|\llbracket \frac13, \frac23 \rrbracket_I| \geq \frac{|I|}{4} $ and $|\llbracket \frac13,1\rrbracket_I| \leq \frac{3|I|}{4}$ for any $I$ with $|I| > N^{\sigma+\delta}$.

Suppose that \eqref{eq:bad_case_irregular} does not hold. Then, there exists a regular vertex $x\in\llbracket \frac13,\frac23\rrbracket_I$; since $|I|\leq N/4$, by the definition of regular there exists a path $\pi$ from $x$ to $(\text{int}(I))^c$ of length at most $N^\sigma$. We let $\Phi_I$ denote the set of edges $e$ in this path such that $|e| \geq N^\delta$. \eqref{eq:number_in_Phi} and \eqref{eq:incidence_in_Phi} then trivially hold. In case $\pi$ leaves $\text{int}(I)$ from the left, we let $I' = \llbracket 0,\frac13\rrbracket_I$ and $I'' = \llbracket \frac13,1\rrbracket_I$; in case $\pi$ leaves $\text{int}(I)$ from the right, we let $I'= \llbracket \frac23,1\rrbracket_I$ and $I'' = \llbracket 0,\frac23\rrbracket_I$. Then, \eqref{eq:size_of_I_prime} holds.

Let $e = \{x,y\} \in E^\circ(I')$ be a ground edge with $\psi(e, \Phi_I) = 0$. Then, there are two possibilities:
\begin{itemize}
\item $\pi$ traverses $e$. There can be no more than $N^\sigma$ edges for which this holds.
\item $\pi$ does not traverse $e$ and there is some edge $e' = \{x', y'\}$ with $1 < |e'| < N^\delta$ so that $\pi$ traverses $e'$ and $\psi(e,\{e'\}) = 1$, that is, $e'$ covers $e$. The number of edges  for which this is true is no more than
$$\sum_{\substack{e'': |e''| < N^\delta,\\\pi \text{ traverses } e''}} |e''| \leq N^{\sigma + \delta}.$$
\end{itemize} 
This proves \eqref{eq:coverage_in_Phi}.
\end{proof}

For Level 2, we again start with an interval $I$, which we re-label as $I_0$. We then apply the following procedure. We try to decompose $I_0 = I_0' \cup I_0''$ as in Level 1; if this is impossible (due to irregular vertices), we stop. Otherwise, we let $I_1 = I_0''$ and try to decompose $I_1 = I_1' \cup I_1''$ again as in Level 1; if this is impossible, we stop, etc., continuing until we are either forced to stop because  too many irregular vertices make a decomposition impossible, or we reach a sufficiently small interval $I_n$. 

In the statement below, the sets $\Gamma_I \subset E_N$ and $\tilde E_I \subset E^\circ(I)$ are the end products of this recursive procedure. $\Gamma_I$ is the set of all long edges obtained in successful decompositions (that is, a union of sets of the form $\Phi_{I_i}$ given by Lemma \ref{lem:new_aux2}). $\tilde E_I$ is the set of ground edges of $I$ which end up \textit{not} being covered by long edges of $\Gamma_I$. The alternatives in \eqref{eq:either_or} thus express that either $|\tilde E_I|$ is small or there are at least $\frac15 |\tilde E_I|$ irregular vertices in $I$.

\begin{lemma}\label{lem:new_aux_cost_high}\textbf{(Level 2 of recursion).}
If \eqref{eq:set_of_assumptions} holds and $N$ is large enough, then the following holds. For every  interval $I \subset V_N$ with $|I| < N/4$ there exist $\Gamma_I \subset E_N$ and $\tilde E_I \subset E^\circ(I)$ such that
\begin{align} \label{eq:number_in_Gamma}&|\Gamma_I| \leq N^\sigma (\log N)^2;\\
\label{eq:incidence_in_Gamma}&\text{every edge of }\Gamma_I\text{ is incident to at least one vertex of }\text{int}(I);\\
\label{eq:length_in_Gamma} &|e| \geq N^\delta \text{ for all } e \in \Gamma_I;\\
\label{eq:coverage_in_Gamma}&\psi(e, \Gamma_I) \geq 1\text{ for all }e \in E^\circ(I)\backslash \tilde E_I;\\
&\text{either }|\tilde E_I| \leq N^{\sigma+\delta}  (\log N)^4\text{ or }|\{x\in \text{int}(I): x \text{ is irregular}\}|\geq |\tilde E_I|/5.\label{eq:either_or}
\end{align}
\end{lemma}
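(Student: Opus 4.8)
The plan is to iterate Lemma~\ref{lem:new_aux2}, each time peeling a ``good'' outer third $I_i'$ off the current interval $I_i$, keeping the complementary part $I_i''$ as the next interval, and continuing until the recursion is either blocked by irregular vertices or reaches an interval too short for Lemma~\ref{lem:new_aux2} to be useful. Precisely, I would set $I_0 := I$ and, given $I_i$, do the following: if $|I_i| \le N^{\sigma+\delta}$, set $n := i$ and stop (\emph{termination by smallness}); otherwise apply Lemma~\ref{lem:new_aux2} to $I_i$ (which is legitimate, since $|I_i| \le |I| < N/4$), and if \eqref{eq:bad_case_irregular} holds for $I_i$ set $n := i$ and stop (\emph{termination by irregularity}), while if it fails let $(I_i', I_i'', \Phi_{I_i})$ be the objects produced by the lemma, set $I_{i+1} := I_i''$, and continue. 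By \eqref{eq:size_of_I_prime} every non-terminal step satisfies $|I_{i+1}| = |I_i''| \le \tfrac34 |I_i|$, hence $|I_i| \le (3/4)^i N$; since $\sigma, \delta > 0$ this forces the recursion to stop after $n \le C_0 \log N$ steps for some absolute constant $C_0$, so in particular $n \le (\log N)^2$ for $N$ large.

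I would then take $\Gamma_I := \bigcup_{i=0}^{n-1} \Phi_{I_i}$, so that \eqref{eq:length_in_Gamma} and \eqref{eq:number_in_Gamma} follow immediately from \eqref{eq:length_in_Phi}, \eqref{eq:number_in_Phi} and the bound $n \le (\log N)^2$. Since each $I_{i+1} = I_i''$ equals $\llbracket \tfrac13, 1\rrbracket_{I_i}$ or $\llbracket 0, \tfrac23\rrbracket_{I_i}$, a one-line induction gives $\text{int}(I_i) \subset \text{int}(I)$ for every $i$, and combined with \eqref{eq:incidence_in_Phi} this gives \eqref{eq:incidence_in_Gamma}. Peeling off outer thirds also makes $I_0', I_1', \dots, I_{n-1}', I_n$ sub-intervals of $I$ with pairwise disjoint interiors whose union is $I$ --- any two of them meet in at most one point, because $I_j' \subset I_i''$ whenever $j > i$ while $I_i'$ and $I_i''$ share a single point --- so the sets $E^\circ(I_0'), \dots, E^\circ(I_{n-1}')$ are pairwise disjoint subsets of $E^\circ(I)$, and their union together with $E^\circ(I_n)$ and at most $n$ ``crossing'' ground edges (one per decomposition step) exhausts $E^\circ(I)$. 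I would define
\[
\tilde E_I := E^\circ(I) \setminus \bigcup_{i=0}^{n-1}\bigl\{e \in E^\circ(I_i') : \psi(e, \Phi_{I_i}) \ge 1\bigr\};
\]
then \eqref{eq:coverage_in_Gamma} is immediate, since $e \notin \tilde E_I$ means $e \in E^\circ(I_i')$ with $\psi(e, \Phi_{I_i}) \ge 1$ for some $i \le n-1$, and $\Phi_{I_i} \subset \Gamma_I$.

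For the dichotomy \eqref{eq:either_or} I would use \eqref{eq:coverage_in_Phi} to bound $|\{e \in E^\circ(I_i') : \psi(e, \Phi_{I_i}) = 0\}| \le 2 N^{\sigma+\delta}$ for each $i \le n-1$; combined with the description of $E^\circ(I)$ above, this yields $|\tilde E_I| \le |I_n| + 3 C_0 (\log N) N^{\sigma+\delta}$. If the recursion terminated by smallness then $|I_n| \le N^{\sigma+\delta}$, so $|\tilde E_I| \le N^{\sigma+\delta}(\log N)^4$ for $N$ large --- the first alternative. If it terminated by irregularity at $n = m$, then $|I_m| > N^{\sigma+\delta}$ and $I_m$ has at least $|I_m|/4$ irregular vertices, at most two of which are endpoints of $I_m$, so (using $\text{int}(I_m) \subset \text{int}(I)$) $\text{int}(I)$ contains at least $|I_m|/4 - 2$ irregular vertices; assuming the first alternative fails, i.e.\ $|\tilde E_I| > N^{\sigma+\delta}(\log N)^4$, the bound on $|\tilde E_I|$ forces $|I_m| \ge \tfrac12 N^{\sigma+\delta}(\log N)^4 \to \infty$, hence $3 C_0 (\log N) N^{\sigma+\delta} = o(|I_m|)$ and $|\tilde E_I| \le |I_m|(1 + o(1))$, and therefore $|I_m|/4 - 2 \ge |\tilde E_I|/5$ for $N$ large --- the second alternative. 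When $p = \infty$ every vertex is regular, \eqref{eq:bad_case_irregular} never holds, the recursion always terminates by smallness, and only the first alternative of \eqref{eq:either_or} is ever needed.

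The main difficulty I anticipate is not any single estimate but the geometric bookkeeping: checking that the intervals produced by the recursion genuinely tile $I$ up to $O(\log N)$ leftover crossing edges, that $\text{int}(I_i) \subset \text{int}(I)$ holds throughout, and --- the crux of the scheme --- that the $O(\log N)$ iteration count is comfortably absorbed by the slacks $(\log N)^2$ and $(\log N)^4$ built into \eqref{eq:number_in_Gamma} and \eqref{eq:either_or}. Once this framework is in place, every individual assertion of the lemma reduces directly to one of the five conclusions of Lemma~\ref{lem:new_aux2}.
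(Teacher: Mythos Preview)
Your proposal is correct and follows essentially the same approach as the paper: iterate Lemma~\ref{lem:new_aux2} on the shrinking sequence $I_0 \supset I_1 \supset \cdots$, collect the $\Phi_{I_i}$ into $\Gamma_I$, and split the final dichotomy according to whether the recursion halts because the remaining interval is small or because it has too many irregular vertices. The only cosmetic differences are your smallness threshold $N^{\sigma+\delta}$ (the paper uses $N^{\sigma+\delta}(\log N)^3$), your complement-style definition of $\tilde E_I$ (the paper writes it as a union), and your handling of the irregularity case by first assuming the size alternative fails; all of these are equivalent bookkeeping choices, and your explicit tracking of the at most $n$ ``crossing'' ground edges is if anything slightly more careful than the paper's treatment.
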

\begin{proof} 
Denote $I_0 = I$. In case we have 
\begin{equation}\label{eq:bad_case_for0}|I_0| >  N^{\sigma+\delta} (\log N)^3 \quad \text{and}\quad |\{x \in \text{int}(I_0): x \text{ is irregular}\}| < |I_0|/4,\end{equation} we take $\Phi_{I_0} \subset E_N$, $I_0',I_0'' \subset I_0$ corresponding to $I_0$ as in Lemma \ref{lem:new_aux2} and denote $I_1 = I_0''$ (in particular, $|I_1| \leq 3|I_0|/4$). Next, if
$$|I_1| >  N^{\sigma+\delta} (\log N)^3 \quad \text{and}\quad |\{x \in \text{int}(I_1): x \text{ is irregular}\}| < |I_1|/4,$$
we take $\Phi_{I_1} \subset E_N$, $I_1',I_1'' \subset I_1$ corresponding to $I_1$ as in Lemma \ref{lem:new_aux2} and denote $I_2 = I_1''$ (in particular, $|I_2| \leq 3|I_1|/4$). We continue in this way, obtaining sets of vertices $I_0 \supset I_1 \supset \cdots$ and sets of edges $\Phi_{I_0},\;\Phi_{I_1},\ldots$ until we reach the first index $n$ for which either 
\begin{equation}
\label{eq:eq_case1} |I_n| \leq N^{\sigma+\delta} (\log N)^3
\end{equation}
or
\begin{equation}
\label{eq:eq_case2} |\{x \in \text{int}(I_n): x \text{ is irregular}\}| \geq |I_n|/4;
\end{equation}
note that, since $|I_{i+1}| \leq 3|I_i|/4$ for each $i$, we must have $n \leq (\log N)^2$ if $N$ is large enough.

Now, in case $n = 0$, then one of the conditions in \eqref{eq:bad_case_for0} fails; in either case, setting $\Gamma_I = \varnothing$ and $\tilde E_I = E^\circ(I)$, it can be readily seen that \eqref{eq:number_in_Gamma}, \eqref{eq:incidence_in_Gamma}, \eqref{eq:length_in_Gamma}, \eqref{eq:coverage_in_Gamma} and \eqref{eq:either_or} are all satisfied.

Assume $n > 0$. Let $$\Gamma_I = \cup_{i=0}^{n-1} \Phi_{I_i},\qquad \tilde E_I = E^\circ(I_n) \cup \left(\bigcup_{i=0}^{n-1}\{e \in E^\circ(I_i'): \psi(e, \Phi_{I_i}) = 0\}\right).$$ Then, \eqref{eq:number_in_Gamma} holds because $n \leq (\log N)^2$ (as already observed) and $|\Phi_{I_i}| \leq N^\sigma$ for each $i$ (as guaranteed in \eqref{eq:number_in_Phi}). Also, \eqref{eq:incidence_in_Gamma}, \eqref{eq:length_in_Gamma} and \eqref{eq:coverage_in_Gamma} respectively follow from \eqref{eq:incidence_in_Phi}, \eqref{eq:length_in_Phi} and the definition of $\tilde E_I$.

Let us prove \eqref{eq:either_or}. We observe that
\begin{equation}\label{eq:aux_to_end}\begin{split}
\tilde E_I \leq |I_n| + \sum_{i=0}^{n-1} |\{e \in E^\circ(I_i'): \psi(e, \Phi_{I_i}) = 0\}| &\stackrel{\eqref{eq:coverage_in_Phi}}{\leq} |I_n| + 2(n-1)N^{\sigma+\delta} \\&\hspace{.2cm}\leq |I_n| + 2N^{\sigma+\delta}(\log N)^2.
\end{split}\end{equation}
Assume that \eqref{eq:eq_case1} holds. Then, the above computation gives $\tilde E_I \leq N^{\sigma+\delta} (\log N)^3 + 2N^{\sigma+\delta}(\log N)^2 \leq N^{\sigma+\delta} (\log N)^4$, so we have \eqref{eq:either_or} in this case. 

Now assume that \eqref{eq:eq_case1} does not hold and \eqref{eq:eq_case2} holds. Since we then have $|I_n| > N^{\sigma+\delta} (\log N)^3 > 8N^{\sigma+\delta} (\log N)^2$,  \eqref{eq:aux_to_end} gives $|\tilde E_I| \leq \frac54 |I_n|$, and then
$$|\{x\in \text{int}(I): x \text{ is irregular}\} |\geq |\{x\in \text{int}(I_n): x \text{ is irregular}\} | \stackrel{\eqref{eq:eq_case2}}{\geq} \frac{1}{5}\cdot \frac{5|I_n|}{4} \geq \frac{|\tilde E_I|}{5}.$$
\end{proof}

\begin{lemma}\label{lem:new_aux_cost_high2}\textbf{(Level 3 of recursion).}
If \eqref{eq:set_of_assumptions} holds and $N$ is large enough, then there exist sets  $\Lambda_1 \subset \cdots \subset \Lambda_{k} \subset E_N$ such that, for every $j \in \{1,\ldots, k\}$, we have
\begin{eqnarray}
\label{eq:before_fin1}&&|\Lambda_j| \leq 3N^{\sigma j}(\log N)^{3j},\\
\label{eq:before_fin15}&&|e| \geq N^\delta \text{ for all } e \in \Lambda_j,\\
\label{eq:before_fin2}&&\hspace{-.18cm}\begin{array}{l}|\{e \in E^\circ_N: \psi(e, \Lambda_j) \geq j\}| \\[.1cm]\qquad \qquad\geq N - (\log N)^{5j}N^{\sigma j + \delta} -10j|\{x \in V_N: x\text{ is irregular}\}|.\end{array}
\end{eqnarray}
\end{lemma}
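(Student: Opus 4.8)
The plan is to build $\Lambda_1\subset\cdots\subset\Lambda_k$ by adding one ``layer'' of long edges at a time, each layer coming from a fresh application of Lemma~\ref{lem:new_aux_cost_high} (Level~2), and to prove \eqref{eq:before_fin1}, \eqref{eq:before_fin15} and \eqref{eq:before_fin2} simultaneously by induction on $j$.

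For the base case $j=1$ I would split $V_N$ into five consecutive intervals $J_1,\dots,J_5$ of length $<N/4$, apply Lemma~\ref{lem:new_aux_cost_high} to each, and set $\Lambda_1=\bigcup_{m}\Gamma_{J_m}$. Then \eqref{eq:number_in_Gamma} and \eqref{eq:length_in_Gamma} give \eqref{eq:before_fin1} and \eqref{eq:before_fin15}, while the ground edges not covered by $\Lambda_1$ lie in $\bigcup_m\tilde E_{J_m}$ together with the four ground edges straddling the $J_m$-boundaries; since the interiors of the $J_m$ are disjoint, summing \eqref{eq:either_or} over $m$ bounds this by $5N^{\sigma+\delta}(\log N)^4+5\,|\{x\in V_N:x\text{ irregular}\}|+4$, which is within \eqref{eq:before_fin2} for $N$ large.

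The inductive step is where the real work lies. Given $\Lambda_j$, the naive recursion --- apply Level~2 inside each interval spanned by an edge of $\Lambda_j$ and add the resulting edges --- runs into two problems: the ``new'' edges produced inside such an interval might already lie in $\Lambda_j$, so that the coverage count does not actually go up; and, because these intervals overlap, a single irregular vertex can appear in many of the error sets $\tilde E$, which would inflate the error by a factor $|\Lambda_j|$. I would circumvent both by recursing instead into the \emph{cells} cut out by $\Lambda_j$: let $\mathcal C$ be the partition of $V_N$ into maximal intervals whose interior contains no endpoint of any edge of $\Lambda_j$, split each cell of length $\ge N/4$ into at most five pieces of length $<N/4$, and let $\mathcal C'$ be the resulting family --- so the cells of $\mathcal C'$ have pairwise disjoint interiors, their number is at most a constant multiple of $|\Lambda_j|$, and no edge of $\Lambda_j$ is incident to the interior of any $C\in\mathcal C'$. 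For each $C\in\mathcal C'$ with $|C|$ not too small, apply Lemma~\ref{lem:new_aux_cost_high} to obtain $\Gamma_C,\tilde E_C$ (for very short $C$ set $\Gamma_C=\varnothing$), and put $\Lambda_{j+1}:=\Lambda_j\cup\bigcup_{C\in\mathcal C'}\Gamma_C$. By \eqref{eq:incidence_in_Gamma} every edge of $\Gamma_C$ is incident to $\text{int}(C)$, and since $\text{int}(C)$ contains no endpoint of an edge of $\Lambda_j$ it follows that no edge of $\Gamma_C$ belongs to $\Lambda_j$; therefore, by \eqref{eq:coverage_in_Gamma}, any ground edge $e_0\in E^\circ(C)\setminus\tilde E_C$ with $\psi(e_0,\Lambda_j)\ge j$ satisfies $\psi(e_0,\Lambda_{j+1})\ge j+1$. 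Bound \eqref{eq:before_fin1} for $\Lambda_{j+1}$ then follows from $|\Gamma_C|\le N^\sigma(\log N)^2$ together with $|\mathcal C'|\le (\text{const})\,|\Lambda_j|\le(\text{const})\,N^{\sigma j}(\log N)^{3j}$; \eqref{eq:before_fin15} is immediate; and for \eqref{eq:before_fin2} one discards, besides the ground edges already uncovered at level $j$, the ground edges lying in a short cell or straddling a cell boundary (at most a constant multiple of $|\Lambda_j|\,N^{\sigma+\delta}$ of them) and the set $\bigcup_{C}\tilde E_C$, whose cardinality is at most a constant multiple of $|\Lambda_j|\,N^{\sigma+\delta}(\log N)^4$ plus $5\,|\{x:x\text{ irregular}\}|$ by \eqref{eq:either_or} and the disjointness of the interiors of the cells. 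Substituting $|\Lambda_j|\le 3N^{\sigma j}(\log N)^{3j}$, every error term becomes either of the form $(\log N)^{O(j)}N^{\sigma(j+1)+\delta}$ or $O(1)\cdot|\{x:x\text{ irregular}\}|$, and a direct check shows their sum is dominated by $(\log N)^{5(j+1)}N^{\sigma(j+1)+\delta}+10(j+1)|\{x:x\text{ irregular}\}|$ once $N$ is large, which is \eqref{eq:before_fin2} at level $j+1$.

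The main obstacle is precisely this error bookkeeping in the inductive step: one must keep the construction laminar enough that each new layer raises the coverage count of almost every ground edge, while neither the uncovered sets $\tilde E_C$ nor the irregular vertices they hide pick up a spurious factor $|\Lambda_j|$. Routing the recursion through the cells of $\Lambda_j$ is what makes the necessary disjointness available and settles this. The potentially degenerate situations --- a cell of length $\ge N/4$ that cannot be usefully subdivided, or $\Lambda_j=\varnothing$ --- cause no trouble: in those cases \eqref{eq:either_or} already forces the number of irregular vertices to be at least a constant multiple of $N$, so \eqref{eq:before_fin2} holds for a trivial reason and one may simply keep $\Lambda_j$ unchanged thereafter.
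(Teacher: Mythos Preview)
Your proposal is correct and follows essentially the same approach as the paper's proof: build $\Lambda_{j+1}$ by partitioning $V_N$ into the intervals cut out by the endpoints of $\Lambda_j$, apply Level~2 inside each, and use \eqref{eq:incidence_in_Gamma} together with the fact that no edge of $\Lambda_j$ touches the interior of a cell to guarantee $\Gamma_C\cap\Lambda_j=\varnothing$, so that the coverage count genuinely goes up. The only cosmetic differences are that the paper includes the five base-case division points $\bar x_1,\ldots,\bar x_4$ among the cell boundaries from the outset (instead of post-splitting long cells) and applies Level~2 only to those cells already covered $j$ times by $\Lambda_j$ (instead of all cells); neither change affects the bookkeeping.
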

\begin{proof}
We will do induction on $j$. To start the induction, we fix $\bar x_1, \bar x_2, \bar x_3, \bar x_4 \in V_N$ so that, letting $K_0 = \llbracket 0, \bar x_1\rrbracket$, $K_1 = \llbracket \bar x_1, \bar x_2 \rrbracket$, $K_2 = \llbracket \bar x_2, \bar x_3\rrbracket$, $K_3 = \llbracket \bar x_3, \bar x_4\rrbracket$ and $K_4 = \llbracket \bar x_4, N-1\rrbracket$,  we have $|K_0|, |K_1|, |K_2|, |K_3|,|K_4| < N/4$. We then apply Lemma \ref{lem:new_aux_cost_high} to $K_0$, $K_1$, $K_2$, $K_3$ and $K_4$. For $i = 0,1,2,3,4$, let $\Gamma_{K_i} \subset E_N$ and $\tilde E_{K_i}$ be as in that lemma; then set $\Lambda_1 = \cup_{i=0}^5\Gamma_{K_i}$. Then, \eqref{eq:before_fin1} and \eqref{eq:before_fin15} with $j = 1$ respectively follow from \eqref{eq:number_in_Gamma} and \eqref{eq:length_in_Gamma}. By \eqref{eq:coverage_in_Gamma} we have
$$|\{e \in E^\circ_N:\psi(e,\Lambda_1) \geq 1\}| \geq N - \sum_{i=0}^4|\tilde E_{K_i}|;$$
furthermore, by \eqref{eq:either_or}, for each $i\in\{0,1,2,3,4\}$ we either have $|\tilde E_{K_i}| \leq N^{\sigma+\delta}(\log N)^4$ or $|\{x\in\text{int}(K_i):x\text{ is irregular}\}| \geq |\tilde E_{K_i}|/5$. Hence,
\begin{eqnarray}
\nonumber &&\sum_{i=0}^4 |\tilde E_{K_i}| \cdot \mathds{1}_{\{|\tilde E_{K_i}| \leq N^{\sigma + \delta}(\log N)^4\}} \leq 5 N^{\sigma + \delta}(\log N)^4, \\
&&\sum_{i=0}^4 |\tilde E_{K_i}| \cdot \mathds{1}_{\{|\tilde E_{K_i}| > N^{\sigma + \delta}(\log N)^4\}} \leq 5 |\{x \in V_N: x \text{ is irregular}\}|.\label{eq:bound:newdelta}
\end{eqnarray}
This proves \eqref{eq:before_fin2} with $j=1$.

Now assume $j < k$ and $\Lambda_j$ has been defined and satisfies  \eqref{eq:before_fin1}, \eqref{eq:before_fin15} and \eqref{eq:before_fin2}. Let $z_0 < z_1 < \cdots <z_{r}$ denote the vertices that belong to the set
$$\{0,\;\bar x_1,\; \bar x_2,\;\bar x_3,\;\bar x_4,\; N-1\} \cup \{x \in V_N: x \text{ is the extremity of some edge in } \Lambda_j\}.$$
Also define the integer intervals $I_i = \{z_{i-1},\ldots,z_{i}\}$, for $i \in \{1,\ldots, r\}$. Note that, for any fixed $i$, the value of $\psi(e, \Lambda_j)$ is the same for all edges $e$ contained in $E^\circ(I_i)$. We thus let $J_1,\ldots, J_s$ be those intervals among $I_1,\ldots, I_r$ that satisfy 
\begin{equation} \label{eq:what_Js_satisfy}\psi(e,\Lambda_j) \geq j \text{ for all }e \in J_i.\end{equation} We have 
\begin{eqnarray}\label{eq:val_r} &&s \leq r \leq 2|\Lambda_j| \stackrel{\eqref{eq:before_fin1}}{\leq} 6 N^{\sigma j}(\log N)^{3j},\\
&&\begin{split}\sum_{i=1}^s (|J_i|-1) &\stackrel{\eqref{eq:what_Js_satisfy}}{=} |\{e \in E^\circ_N: \psi(e, \Lambda_j) \geq j  \}|  \\&\stackrel{\eqref{eq:before_fin2}}{\geq} N - (\log N)^{5j}N^{\sigma j + \delta} -10j|\{x\in V_N:x \text{ is irregular}\}|.\end{split}\label{eq:good_size_appears}
\end{eqnarray}

Now, for each $i \in \{1,\ldots, s\}$ we apply Lemma \ref{lem:new_aux_cost_high} to $J_i$, thus obtaining $\Gamma_{J_i} \subset E_N$ and $\tilde E_{J_i} \subset E^\circ(J_i)$. Let $\Lambda_{j+1} = \Lambda_j \cup \left(\cup_{i=1}^s \Gamma_{J_i}\right)$. We observe that, for each $i$, 
\begin{equation}\label{eq:gamma_covered}\Gamma_{J_i} \cap \Lambda_j = \varnothing.\end{equation}Indeed, by construction no edge of $\Lambda_j$ is incident to vertices of $\text{int}(J_i)$, and by \eqref{eq:incidence_in_Gamma} every edge of $\Gamma_{J_i}$ is incident to at least one vertex of $\text{int}(J_i)$.

By \eqref{eq:length_in_Gamma}, all edges  $e \in \Lambda_{j+1}$ satisfy $|e| > N^\delta$; moreover, 
\begin{align*}|\Lambda_{j+1}| &= |\Lambda_j| + \sum_{i=1}^s |\Gamma_{J_i}| \\&\stackrel{\eqref{eq:number_in_Gamma},\eqref{eq:before_fin1},\eqref{eq:val_r}}{\leq} 3N^{\sigma j}(\log N)^{3j} +  6N^{\sigma j}(\log N)^{3j} \cdot N^\sigma (\log N)^2 \\&\qquad\leq N^{\sigma(j+1)} (\log N)^{3(j+1)}.\end{align*} 
Hence the proof will be complete once we show that
\begin{equation}
\label{eq:what_is_missing} \begin{split}&|\{e\in E^\circ_N: \psi(e, \Lambda_{j+1}) \geq j+1\}| \\&\geq  N - (\log N)^{5(j+1)}N^{\sigma (j+1) + \delta} -10(j+1)|\{x\in V_N: x \text{ is irregular}\}|.\end{split}
\end{equation}

Applying \eqref{eq:coverage_in_Gamma}, \eqref{eq:what_Js_satisfy} and \eqref{eq:gamma_covered} to each $\Gamma_{J_i}$,  we have
\begin{equation*}
i \in \{1,\ldots, s\},\;e \in E^\circ(J_i) \backslash \tilde E_{J_i} \Longrightarrow \psi(e, \Lambda_{j+1}) \geq \psi(e, \Lambda_j) + \psi(e, \Gamma_{J_i}) \geq j+1,
\end{equation*}
hence
$$\begin{aligned}|\{e\in E^\circ_N: \psi(e, \Lambda_{j+1}) \geq j+1\}| &\geq \sum_{i=1}^s (|J_i| - 1 - |\tilde E_{J_i}|) \\&\hspace{-3cm}\stackrel{\eqref{eq:good_size_appears}}{\geq} N - (\log N)^{5j}N^{\sigma j + \delta} -10j|\{x\in V_N: x \text{ is irregular}\}| - \sum_{i=1}^s |\tilde E_{J_i}|.\end{aligned}$$
We then conclude by bounding
\begin{align*}\sum_{i=1}^s |\tilde E_{J_i}| \cdot \mathds{1}_{\{|\tilde E_{J_i}| \leq N^{\sigma+\delta} (\log N)^4\}} &\stackrel{\eqref{eq:val_r}}{\leq} 6N^{\sigma j}(\log N)^{3j} \cdot N^{\sigma+\delta}(\log N)^4 \\&\leq N^{\sigma (j+1) + \delta} (\log N)^{3j+4} \end{align*}
and, similarly to \eqref{eq:bound:newdelta},
\begin{equation*}\sum_{i=1}^s |\tilde E_{J_i}| \cdot \mathds{1}_{\{|\tilde E_{J_i}| > N^{\sigma+\delta} (\log N)^4\}} \leq 10|\{x\in V_N: x \text{ is irregular}\}|. \qedhere
\end{equation*}
\end{proof}
\begin{proof}[Proof of Lemma \ref{lem:cost_is_high}]
Recall that so far our only assumption concerning $\sigma$ was that $\sigma \geq \eta$; now, setting $j = k$ in \eqref{eq:before_fin2}, we will choose the value of $\sigma$ that makes the estimate in \eqref{eq:before_fin2} the sharpest as $N \to \infty$.

In case $p = \infty$, we simply take $\sigma = \eta$ and the desired result follows, as there are no irregular vertices in this case. In case $p \in [1,\infty)$, using \eqref{eq:num_regular} we obtain
$$|\{e \in E^\circ_N: \psi(e, E_N) \geq k\}| \geq  N - (\log N)^{5k}N^{\sigma k + \delta} -10k N^{1-p(\sigma- \eta)}.$$
We then set $\sigma = \frac{1+p \eta - \delta}{k+p}$, so that we equate the two exponents of $N$:
$$\sigma k + \delta = 1-p(\sigma - \eta) = 1-\frac{p(1-k \eta - \delta)}{k+p}.$$
Note that $\sigma \geq \eta$ follows from our assumption that $\delta < 1-k \eta$. This completes the proof.
\end{proof}

\appendix
\section{Large deviation estimate}
\label{s.ldp-appendix}
The purpose of this appendix is to prove the following large deviation result concerning sums of i.i.d.\ random variables with stretched exponential tails. While the result is classical, and can be deduced for instance from the more general results of \cite{gantert}, we prefer to give a self-contained and short proof here for the reader's convenience.

\begin{proposition}
\label{p.ldp-stretch}
Let $\ga \in (0,1]$, $\theta > 0$ and $(X_i)_{i \in \N}$ be i.i.d.\ non-negative random variables satisfying $\E[\exp(\theta X_1^\ga)] < \infty$. For every $m > 0$, there exists $c > 0$ such that uniformly over $N \ge 1$,
$$
\P \Ll[ \sum_{i = 1}^N X_i \ge \Ll(\E[X_1] + m\Rr) N \Rr] \le \exp\Ll(-cN^{\ga}\Rr).
$$
\end{proposition}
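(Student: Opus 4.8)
The plan is to use a truncation argument combined with the exponential Chebyshev inequality, exploiting the fact that the stretched exponential tail $\E[\exp(\theta X_1^\ga)] < \infty$ gives enough control on moderate deviations even though the moment generating function $\E[\exp(\lambda X_1)]$ is infinite for every $\lambda > 0$ when $\ga < 1$. The reason one only gets a bound of order $\exp(-cN^\ga)$ rather than $\exp(-cN)$ is precisely that a single summand of size comparable to $N$ contributes probability roughly $\exp(-\theta N^\ga)$, and this is the dominant way the sum can be large.

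First I would fix a truncation level proportional to $N$, say $K = mN/2$, and split
\[
\P\Ll[\sum_{i=1}^N X_i \ge (\E[X_1]+m)N\Rr] \le \P\Ll[\exists i \le N : X_i > K\Rr] + \P\Ll[\sum_{i=1}^N X_i \1_{X_i \le K} \ge (\E[X_1]+m)N\Rr].
\]
The first term is handled by a union bound: $N \, \P[X_1 > K] \le N \exp(-\theta K^\ga)/\E[\exp(\theta X_1^\ga)]^{-1}$, and since $K^\ga$ is of order $N^\ga$ this is at most $\exp(-cN^\ga)$ for a suitable $c$, absorbing the polynomial factor $N$ into the exponential (using $N \le \exp(c' N^\ga)$). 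For the second term, I would apply Chebyshev with a parameter $\lambda$ that is allowed to depend on $N$: writing $Y_i = X_i \1_{X_i \le K}$, we get
\[
\P\Ll[\sum_{i=1}^N Y_i \ge (\E[X_1]+m)N\Rr] \le \exp\Ll(-\lambda(\E[X_1]+m)N\Rr)\, \E[\exp(\lambda Y_1)]^N.
\]
The key estimate is then to show that for $\lambda = c_0 N^{\ga-1}$ with $c_0$ small, one has $\E[\exp(\lambda Y_1)] \le \exp\big(\lambda \E[X_1] + \tfrac{m}{2}\lambda\big)$, or something of this flavour, so that the product over $i$ contributes $\exp\big(\lambda(\E[X_1]+\tfrac m2)N\big)$, leaving a net factor $\exp(-\tfrac m2 \lambda N) = \exp(-\tfrac m2 c_0 N^\ga)$.

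The main obstacle — and the only real computation — is controlling $\E[\exp(\lambda Y_1)]$ for this choice of $\lambda$. On the truncated variable we have $\lambda Y_1 \le \lambda K = \tfrac12 c_0 m N^\ga$; but more usefully, using $e^t \le 1 + t + t^2 e^{|t|}$ type bounds is not quite enough because of the range up to $N^\ga$. Instead I would write $\E[\exp(\lambda Y_1)] = 1 + \lambda\E[Y_1] + \E[(\exp(\lambda Y_1) - 1 - \lambda Y_1)\1_{X_1 \le K}]$ and bound the last term by splitting the region $\{X_1 \le K\}$ into $\{X_1 \le M\}$ for a large constant $M$ (where a quadratic Taylor bound suffices and yields $O(\lambda^2) = O(N^{2\ga-2}) = o(N^{\ga-1})$) and $\{M < X_1 \le K\}$, where one estimates $\exp(\lambda X_1) \le \exp(\lambda X_1^\ga \cdot X_1^{1-\ga}) \le \exp(\lambda K^{1-\ga} X_1^\ga)$ and notes $\lambda K^{1-\ga}$ is a small constant (independent of $N$) by the choice $\lambda = c_0 N^{\ga-1}$ and $K = mN/2$; then $\E[\exp(\lambda K^{1-\ga} X_1^\ga)\1_{X_1 > M}]$ can be made arbitrarily small by taking $M$ large, using $\E[\exp(\theta X_1^\ga)] < \infty$ and choosing $c_0$ small enough that $\lambda K^{1-\ga} \le \theta$. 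Collecting these bounds gives $\E[\exp(\lambda Y_1)] \le \exp(\lambda \E[X_1] + \tfrac m2 \lambda)$ for $N$ large, and then a final adjustment of the constant $c$ handles small $N$, completing the proof.
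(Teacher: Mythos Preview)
Your strategy coincides with the paper's: truncate at a level comparable to $N$, control the event that some $X_i$ exceeds the truncation level by a union bound, and apply the exponential Chebyshev inequality to the truncated sum with $\lambda$ of order $N^{\ga-1}$, the key inequality being $x \le K^{1-\ga} x^\ga$ for $0 \le x \le K$ (the paper writes this as $x \wedge N \le N^{1-\ga} x^\ga$). The paper packages the moment-generating-function estimate slightly differently, bounding the second derivative $\varphi_N''(\lambda) \le C$ uniformly over $\lambda \le N^{\ga-1}/2$ and deducing $\varphi_N(\lambda) \le \exp(C\lambda^2)$.

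There is, however, a slip in your treatment of the region $\{M < X_1 \le K\}$. You bound $\exp(\lambda Y_1) - 1 - \lambda Y_1$ from above by $\exp(\lambda Y_1) \le \exp(\lambda K^{1-\ga} X_1^\ga)$ and note that $\E[\exp(\lambda K^{1-\ga} X_1^\ga)\1_{X_1 > M}]$ can be made an arbitrarily small \emph{constant} by taking $M$ large. But since $\lambda = c_0 N^{\ga-1} \to 0$ (for $\ga<1$), your target $\E[\exp(\lambda Y_1)] \le \exp\big(\lambda(\E[X_1]+\tfrac m2)\big) = 1 + O(\lambda)$ forces the remainder $\E[\exp(\lambda Y_1)-1-\lambda Y_1]$ to be $O(\lambda)$, and a fixed small constant is not $O(\lambda)$. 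The repair is to keep one power of $\lambda$ from the Taylor remainder: using $e^t - 1 - t \le t\,e^t$ for $t \ge 0$, the contribution on $\{M < X_1 \le K\}$ is at most
\[
\lambda \, \E\big[X_1 \exp(\lambda K^{1-\ga} X_1^\ga)\,\1_{X_1 > M}\big],
\]
and since $\lambda K^{1-\ga} = c_0 (m/2)^{1-\ga}$ can be made strictly smaller than $\theta$ by choosing $c_0$ small, the expectation $\E[X_1 \exp(\lambda K^{1-\ga} X_1^\ga)\1_{X_1 > M}]$ is finite and tends to $0$ as $M \to \infty$, so this term is indeed $\le \tfrac m4 \lambda$ for $M$ large. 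Alternatively, one can dispense with the splitting at $M$ altogether: apply $e^t - 1 - t \le \tfrac{t^2}{2} e^t$ on the full range $\{X_1 \le K\}$ and bound $\E[X_1^2 e^{\lambda X_1}\1_{X_1 \le K}] \le \E[X_1^2 \exp(\lambda K^{1-\ga} X_1^\ga)] \le C$ uniformly in $N$; this recovers exactly the paper's estimate $\varphi_N(\lambda) \le \exp(C\lambda^2)$.
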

\begin{proof}
By a change of variables, it suffices to prove the result with $\theta = 1$.
Let 
$$
\td X^{(N)}_i := X_i \wedge N, \ \quad \ov X^{(N)}_i := \td X^{(N)}_i - \E\Ll[\td X^{(N)}_i\Rr], 
$$
and
$$
\varphi_N(\lambda) := \E \Ll[\exp\Ll(\lambda \ov X^{(N)}_1\Rr) \Rr].
$$
We first show that there exists $C > 0$ such that uniformly over $\lambda \le N^{\ga - 1}/2$ and $N$ sufficiently large,
\begin{equation}
\label{e:control-exp}
\varphi_N(\lambda) \le \exp \Ll( C \lambda^2\Rr) .
\end{equation}
Since $\varphi_N'(0) = 0$, we have
$$
\varphi_N(\lambda) \le 1 + \lambda^2 \sup_{[0,\lambda]} \varphi_N''.
$$
In order to prove \eqref{e:control-exp}, it thus suffices to show that there exists $C < \infty$ such that uniformly over $\lambda \le N^{\ga - 1}/2$ and $N$ sufficiently large,
\begin{equation}
\label{e:control2}
\varphi_N''(\lambda) \le C .
\end{equation}
Clearly,
\begin{equation}
\label{e:control-mean}
\E\Ll[\td X^{(N)}_1\Rr] \xrightarrow[N \to \infty]{} \E[X_1] < \infty,
\end{equation}
so in particular, $\sup_N\Ll|\E\Ll[\td X^{(N)}_1\Rr] \Rr| < \infty$. Hence, 
\begin{align*}
\varphi_N''(\lambda) & \le \E \Ll[\Ll(\ov X^{(N)}_1\Rr)^2\exp\Ll(\lambda \ov X^{(N)}_1\Rr) \Rr] \\
& \le \E \Ll[\Ll(\td X^{(N)}_1 + C \Rr)^2\exp\Ll(\lambda \Ll(\td X^{(N)}_1 + C\Rr) \Rr) \Rr] \\
& \le \E \Ll[\Ll(X_1 + C \Rr)^2\exp\Ll(\lambda \Ll(N^{1-\gamma} X_1^\gamma + C\Rr) \Rr) \Rr],
\end{align*}
where in the last line, we used the fact that for every $x \ge 0$, $N^{1-\gamma} x^\gamma \ge x \wedge N$. 
We then obtain \eqref{e:control2}, and thus \eqref{e:control-exp} (uniformly over $\lambda \le N^{\gamma-1}/2$), using the fact that $\E[\exp(X_1^\ga)] < \infty$ and the elementary observation $\sup_{x \ge 0} x^2 \exp(x/2) / \exp(x) < \infty$.

In view of \eqref{e:control-mean}, we can choose $N$ sufficiently large that 
$$
\Ll|\E\Ll[\td X^{(N)}_1\Rr]  - \E[X_1]\Rr| \le \frac{m}{2}.
$$
For such a choice of $N$, we have
\begin{align*}
\P \Ll[ \sum_{i = 1}^N X_i \ge \Ll(\E[X_1] + m\Rr)N \Rr] & \le \P[\exists N : X_i \ge N] + \P \Ll[ \sum_{i = 1}^N \td X^{(N)}_i \ge \Ll(\E[X_1] + m\Rr) N \Rr] \\
& \le \P[\exists N : X_i \ge N] + \P \Ll[ \sum_{i = 1}^N \ov X^{(N)}_i \ge \frac{m}{2} N \Rr] .
\end{align*}
The integrability assumption and Chebyshev's inequality ensure that
$$
\P[\exists N : X_i \ge N] \le \exp\Ll(-N^{\ga}/2\Rr). 
$$
For the other term, Chebyshev's inequality and \eqref{e:control-exp} yield that for every $\lambda \le N^{\ga - 1}/2$,
$$
\P \Ll[ \sum_{i = 1}^N \ov X^{(N)}_i \ge \frac{m}{2} N \Rr] \le \exp \Ll[ \Ll( C\lambda^2 - \frac{\lambda m}{2}\Rr) N \Rr] .
$$
Choosing $\lambda = N^{\ga-1}/2$ leads to the announced result when $\ga < 1$; otherwise, it suffices to choose $\lambda > 0$ sufficiently small and independent of $N$.
\end{proof}

\bigskip

\noindent \textbf{Acknowledgments.} We would like to thank Florent Cadoux (G2ELab, ERDF Chair, Grenoble) for inspiring discussions on networks of electricity distribution, and the two referees and the associate editor for their very useful comments on an earlier version of this paper. We would also like to thank Emmanuel Jacob, Julia Komj\'athy, and Remco van der Hofstad for helpful discussions.

\bibliographystyle{abbrv}
\bibliography{Gibbs_graphs}

\end{document}